\documentclass{amsart}
\usepackage{amsmath, amssymb}
\usepackage{pgf,pgfarrows,pgfnodes,pgfautomata,pgfheaps,pgfshade,hyperref, amssymb,enumerate,amsmath}
\usepackage[all]{xy}
\usepackage[capitalize]{cleveref}
\usepackage{mathtools}
\usepackage[shortlabels]{enumitem}
\usepackage{bbm}
\usepackage{bm}
\usepackage{csquotes}
\usepackage[margin=3.5cm]{geometry}

\newtheorem{theorem}{Theorem}[section]
\newtheorem*{theorem*}{theorem}
\newtheorem{proposition}[theorem]{Proposition}
\newtheorem{lemma}[theorem]{Lemma}
\newtheorem{corollary}[theorem]{Corollary}

\newtheorem{exem}[theorem]{Example}
\newtheorem{remark}[theorem]{Remark}

\usepackage{euscript,epsfig}
\usepackage{tikz}
\usetikzlibrary{graphs}
\usetikzlibrary[graphs]
\usetikzlibrary{arrows}
\usetikzlibrary{tikzmark}
\usetikzlibrary{positioning}
\usetikzlibrary{decorations.pathreplacing,calligraphy}
\usepackage{tikz-cd}
\usetikzlibrary{decorations.markings}
\usepackage[colorinlistoftodos]{todonotes}

\makeatletter
\newcommand{\customlabel}[2]{%
  \def\@currentlabel{#1}%
  \label{#2}%
}
\makeatother

\def\R{{\mathbb R}}
\def\Z{{\mathbb Z}}
\def\H{{\mathbb H}}

\def\N{{\mathbb N}}

\def\QQ{{\mathbb Q}}

\def \Q {{\bf Q}}

\def\Aut{{\rm Aut}} 
\def\AutDyn{{\rm Cent}} 
\def\GLN{{\rm GL}}
\def\Cent{{\rm Z}} 
\def\Centr{{\rm C}}
\def\No{{\rm N}} 

\def\Stab{{\rm Stab}}
\def\Homeo{{\rm Homeo}}
\def\SL{{\rm SL}}
\def\GL{{\rm GL}}
\def\PGL{{\rm PGL}}
\def\PSL{{\rm PSL}}

\DeclarePairedDelimiter\floor{\lfloor}{\rfloor}

\address[Jaime Gómez]{Mathematisch Instituut,
		University of Leiden,
		Einsteinweg 55,
		2333 CC Leiden, The Netherlands} \email{jaagomezortiz@gmail.com}

\address[Samuel Petite]{Laboratoire Ami\'enois
	de Math\'ematiques Fondamentales et Appliqu\'ees, CNRS-UMR 7352, Universit\'{e} de Picardie Jules Verne, 33 rue Saint Leu, 80039   Amiens cedex 1,
	France.} \email{samuel.petite@u-picardie.fr}

\thanks{J. Gómez was supported by ANID/Doctorado Nacional No. 21200054. 
He would like to thank LAMFA, Université de Picardie Jules Verne for its warm hospitality, where part of this work has been done.}

\thanks{S. Petite was supported by the ANR project IZES ANR-22-CE40-0011.  He thanks the support of the project MathAmSud240026.}

\keywords{Toeplitz subshift, odometer, centralizer, normalizer}

\subjclass[2020]{Primary: 37B05, Secondary: 37B10, 20E18}

\begin{document}

\title{On the centralizer and normalizer  groups of odometers and  Toeplitz subshifts}

\author{Jaime G\'omez}
\author{Samuel Petite}
\begin{abstract} For residually finite groups, we study the automorphism and normalizer groups of odometers and their symbolic extensions: Toeplitz subshifts.
In relation to orbit equivalence theory, we show that two continuous orbit equivalent $G$-odometers have commensurable normalizers when $G=\mathbb{Z}^d$, but that this result fails for other nilpotent groups $G$.
The automorphism and normalizer groups of any Toeplitz subshift embed into those of its underlying odometer; despite this constraint, they exhibit considerable flexibility. For instance, we exhibit examples of low-complexity $G$-Toeplitz subshifts whose centralizer is isomorphic to $G$ (or, at the other extreme, restricted to its center) yet whose normalizer group is as large as possible.
\end{abstract}

\date{}

\maketitle

\section{Introduction}

Algebraic invariants associated with topological dynamical systems play an important role in their classification and study.
Examples of such invariants include centralizer groups (also called automorphism groups), normalizer groups, full groups, Ellis semigroups, and stabilized automorphism groups, among others \cite{He69,CoMe16,Au88,HaKrSc22}.
Their isomorphism classes are invariants of conjugacy and, in some cases, such as for equicontinuous systems, they even serve as complete invariants of the dynamical system.
If one considers a weaker relation than conjugacy, such as topological or measurable orbit equivalence, the corresponding isomorphism classes need no longer be invariant. In such cases, one refers to the \textit{flexibility} of these invariants, and a natural question arises: what is the range of such invariants within a single class of systems? Conversely, when these invariants remain stable (in a suitable sense) across a class of systems, one speaks of \textit{rigidity}.
This paper addresses precisely this problem: the flexibility and rigidity of automorphism and normalizer groups within the framework of continuous orbit equivalence.

Recall that two topological dynamical systems $(X,\alpha, G)$ and $(Y,\beta, H)$, for countable groups $G$ and $H$, are \textit{continuous orbit equivalent} (COE) when there is a homeomorphism $h:X\to Y$ that maps $\alpha$-orbits onto $\beta$-orbits and the associated cocycles are continuous.
The study of continuous orbit equivalence is primarily motivated by classification theory, since it provides a dynamical interpretation of isomorphism between $C^*$-algebras associated with the dynamical systems (\cite{GPS95} for $\Z$-actions, and later \cite{Renault} for general actions).
Conjugate systems are COE, but the converse fails when the phase space is disconnected. This notion is therefore particularly relevant for actions on the Cantor set. Little is known about this class, even though it is characterized by the topological full group of the action (\cite{GPS99} and \cite{Med}, for $\Z$-actions and for general actions under mild assumptions, respectively). Since there is no simple description of the full group for a general action, the understanding of this class has mainly been carried out for equicontinuous minimal dynamical systems \cite{GPS2019, CoMe16}.
As a next step, one may consider their extensions, such as Toeplitz subshifts, which form a rich family.
Moreover, the simplest topological full groups are finitely generated, and all such groups arise from subshifts \cite{Ma06}, \cite[Proposition 3.2.1]{Cornulier}.

Other groups of dynamical interest are the automorphism and normalizer groups of a system $(X,\alpha,G)$, in particular when it is a subshift. The first consists of the centralizer of $\alpha(G)$ within the group of self-homeomorphisms of $X$, i.e., the set of self-homeomorphisms commuting with every transformation of the acting group. The second consists of the normalizer of $\alpha(G)$ within the same group, i.e., the set of self-homeomorphisms that stabilize the collection of transformations of the acting group under conjugation. A normalizer map thus realizes a conjugacy of the action with itself, up to a change of generators of the group; for this reason, normalizers are also called \textit{extended symmetries}. Hence, automorphisms and normalizers are particular instances of continuous orbit equivalence. Automorphisms of equicontinuous systems are well understood, whereas no precise description is available for general Cantor systems, even for mixing subshifts (see \cite{He69,BoLiRu88} for seminal work).
For $\Z$-actions, a normalizer is also called a flip-(self)-conjugacy, and the group it forms is closely related to the automorphism group: algebraically, it is an extension of $\Z/2\Z$ by the automorphism group (see Section~\ref{Subsec: Aut-and-Norm}).
The notion of normalizer group is meaningful for more general group actions as well, but beyond $\Z$-actions, few results are known about these groups. This remains an active area of recent research; let us mention \cite{Ho10, BaCaRi25, SaSc25} for subshifts of finite type, \cite{BBHLN, MeTa19} for positive-entropy subshifts, and \cite{BaRoYa18, BLM, Ca23, CaPe23} for $\Z^d$-substitutive subshifts.

Let us summarize the main results of this paper on the rigidity and flexibility of automorphism and normalizer groups with respect to continuous orbit equivalence, a study initiated in \cite{GPS2019}.
We prove that COE odometers (for residually finite acting groups) have commensurable automorphism groups (Corollary~\ref{Coro: virtually-iso-Aut}). This follows from a rigidity result on COE odometers obtained by Cortez and Medynets \cite{CoMe16}.
In general, commensurability cannot be strengthened to isomorphism, even for $\Z^2$-actions, as Example~\ref{Example: COE-no-iso} shows.

However, we can reinforce Corollary~\ref{Coro: virtually-iso-Aut} when the acting group is free abelian and the associated odometers are torsion-free. Using a characterization from \cite{GPS2019}, we show that torsion-free COE $\Z^d$-odometers are isomorphic as abstract groups and hence have isomorphic automorphism groups (Theorem~\ref{theo:IsoCOEZdOdometer}). As for their normalizer groups, Proposition~\ref{prop:commensurableNormalizer} shows that COE $\Z^d$-odometers have commensurable normalizer groups. For non-abelian groups, the situation is entirely different, even for nilpotent groups: in Example~\ref{ex:NonQINormalizer}, we give explicit COE nilpotent odometers whose normalizer groups are not commensurable — not even quasi-isometric.

Studying automorphism and normalizer groups for the broader family of minimal symbolic systems is significantly more difficult.
The first step consists in studying Toeplitz subshifts, which are almost one-to-one extensions of odometers.
It follows that their automorphism and normalizer groups embed into those of the maximal equicontinuous factor (Proposition~\ref{Prop:MEF-implies-compatible} and Corollary~\ref{Coro:injectiveAut}); consequently, their automorphism groups are residually finite.
Beyond this constraint, the groups exhibit significant flexibility, with a degree of independence between the automorphism group and the normalizer. For instance, for the linear groups $\SL_n(\Z)$, $\PSL_n(\Z)$, $\GL_n(\Z)$, and the Heisenberg group, we construct explicit examples of $G$-Toeplitz subshifts with trivial centralizer, i.e., a centralizer consisting only of shifts by elements of the center of the group (see Section~\ref{sec:ToeplTrivialAuto}).
Note that examples of minimal subshifts over non-abelian groups with trivial centralizer were already known (e.g., the action of a free group on its boundary yields such a subshift of finite type). Here we present non-mixing examples that also apply to amenable groups.

We also construct examples with centralizer isomorphic to the acting group but with the largest possible normalizer group, even when the latter is non-abelian (Section~\ref{sec:ToeplAutoG}).
In addition, these subshift examples have very low complexity: the number of colorings of a pattern grows linearly with the size of the pattern (Proposition~\ref{prop:complexity}). These examples can be seen as an extension of the results of \cite{BBHLN, CaPe23} to the non-abelian case, and show that low complexity does not imply amenability of all the algebraic invariants.
These constructions rely on the machinery of semi-cocycle extensions: we obtain the examples by controlling the automorphism group through the set of discontinuity points of the semi-cocycle (Theorem~\ref{Theorem: diverse-Aut}).

The paper is organized as follows.
Section~\ref{Sec:Basics} recalls the notions and notation for dynamical systems, odometers, Toeplitz subshifts, and the automorphism and normalizer groups of dynamical systems, together with their main properties.
Section~\ref{Sec:AutandNorCOE} is devoted to continuous orbit equivalence and its relation to automorphism and normalizer groups within the class of minimal equicontinuous dynamical systems; we present examples of continuous orbit equivalent systems with diverse behavior in their automorphism and normalizer groups.
Section~\ref{Sec:Construction-Toeplitz} focuses on the construction underlying Theorem~\ref{Theorem: diverse-Aut}, which guarantees the existence of symbolic dynamical systems with restrictions on their automorphism group.
Finally, Section~\ref{Sec:Applications} uses the results of Section~\ref{Sec:Construction-Toeplitz} to construct dynamical systems with trivial automorphism groups for certain amenable and non-amenable acting groups.

\section{Preliminaries}\label{Sec:Basics}
In this section, we set notation and recall definitions and background on dynamical systems and their morphisms. We then focus on odometer systems and their symbolic extensions: Toeplitz subshifts.

\subsection{Dynamical systems}
Let $G$ be a countable infinite group. We denote by $\Cent(G)$ the center of $G$ (i.e., the elements commuting with every other element of $G$), and by $\Aut(G)$ its automorphism group (i.e., the collection of algebraic automorphisms of $G$).
By a \textit{(topological) dynamical system} we mean a continuous (left) action $\alpha$ of $G$ on a compact metric space $X$.
We denote this dynamical system by $(X,\alpha,G)$, where $\alpha^g:X\to X$ denotes the homeomorphism generated by this action for every $g\in G$, and by $\alpha(G)$ the group formed by these maps.
That is, $\alpha(G) = \{\alpha^g:X\to X\mid g\in G\}$.
The set $\Stab_\alpha(x)$ denotes the stabilizer of $x$ under the action $\alpha$.
We say that $(X,\alpha, G)$ is \textit{free} if $\Stab_\alpha(x) = \{1_G\}$ for every $x\in X$.
The system $(X,\alpha, G)$ is \textit{minimal} if the orbit $O_\alpha(x) = \{\alpha^g(x)\mid g\in G\}$ of every $x\in X$ is dense.
A system $(X,\alpha, G)$ is said to be \textit{equicontinuous} if the family of maps $\alpha(G)$ is equicontinuous. Equivalently, this means that there exists a metric for which each map $\alpha^g$ is an isometry.

Let $\Sigma$ be a finite set.
Consider the set $\Sigma^G=\{x:G\to \Sigma\}$, endowed with the product topology and the shift action $\sigma$ of $G$ on $\Sigma^G$ given by $\sigma^g x(h) = x(gh)$ for all $g,h\in G$.
A \textit{subshift} is a dynamical system $(X,\sigma, G)$ where $X\subseteq \Sigma^G$ is a closed, invariant set — i.e., $\sigma^g(X)=X$ for each $g\in G$ — and $\sigma$ is the shift action restricted to $X$.
A subshift is said to be \textit{minimal} if it contains no non-trivial proper subshift.
A surjective continuous map $\pi: X\to Y$ between two dynamical systems $(X,\alpha, G)$ and $(Y,\beta, G)$ is called a \textit{factor} map if $\pi\circ \alpha^g =\beta^g\circ \pi$ for every $g\in G$.
When a factor map is also injective, we call it a \textit{conjugacy}, and say that the systems $(X,\alpha,G)$ and $(Y,\beta, G)$ are conjugate.
When the system $(X, \alpha, G)$ is minimal, the factor map $\pi: X\to Y$ is \textit{almost $1$-$1$} if the set $\{y\in Y: |\pi^{-1}(\{y\})|=1\}$ is non-empty (it is in fact a dense residual subset of $Y$).
More generally, given a group isomorphism $\theta \colon G \to H$ between two groups $G$ and $H$, we call a $\theta$-\textit{conjugacy} between two actions $(X,\alpha, G)$ and $(Y,\beta, H)$ any homeomorphism $\pi \colon X\to Y$ such that $\pi \circ \alpha^g = \beta^{\theta (g)}\circ \pi$ for every $g \in G$. The notion of $\theta$-conjugacy on the same space leads to the notion of normalizer.

\subsection{Automorphism and normalizer groups}\label{Subsec: Aut-and-Norm}
Let $(X,\alpha, G)$ be a dynamical system.
The \textit{automorphism group} $\AutDyn(X,\alpha)$ of $(X,\alpha, G)$ is defined as the centralizer of $\alpha(G)$ in $\Homeo(X)$, endowed with composition.
That is,
\begin{align*}
    \AutDyn(X,\alpha) = \mbox{C}_{\Homeo(X)}(\alpha(G)).
\end{align*}
An element $\varphi\in \AutDyn(X,\alpha)$ is called an \textit{automorphism} of $(X,\alpha, G)$, and satisfies $\varphi\circ \alpha^g =\alpha^g\circ \varphi$ for every $g\in G$. For free systems, the automorphisms lying in $\alpha(G)$ correspond precisely to the (image of the) centralizer of $G$; in this sense, $\alpha(\Cent(G))$ is the smallest group one can expect the automorphism group to contain.
\begin{lemma}
    Let $(X,\alpha,G)$ be a dynamical system.
    If there exists $x_0\in X$ with trivial stabilizer $\Stab_\alpha(x_0)=\{1_G\}$,
    then $\alpha(G)\cap\AutDyn(X,\alpha)=\alpha(\Cent(G))$.
\end{lemma}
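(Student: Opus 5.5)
We prove the two inclusions separately. The inclusion $\alpha(\Cent(G))\subseteq\alpha(G)\cap\AutDyn(X,\alpha)$ holds with no hypothesis. If $z\in\Cent(G)$, then for every $g\in G$ we have
\[
\alpha^z\circ\alpha^g=\alpha^{zg}=\alpha^{gz}=\alpha^g\circ\alpha^z ,
\]
because $\alpha$ is an action. Hence $\alpha^z$ lies in the centralizer of $\alpha(G)$ in $\Homeo(X)$, and it lies in $\alpha(G)$ by definition.

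For the reverse inclusion, let $\varphi\in\alpha(G)\cap\AutDyn(X,\alpha)$ and write $\varphi=\alpha^h$ for some $h\in G$. We must show that $h\in\Cent(G)$. This is where the point $x_0$ enters. For each $g\in G$, commutation gives $\alpha^{hg}=\alpha^h\circ\alpha^g=\alpha^g\circ\alpha^h=\alpha^{gh}$ as homeomorphisms of $X$. Evaluating at $x_0$ gives $\alpha^{hg}(x_0)=\alpha^{gh}(x_0)$. Applying $\alpha^{(gh)^{-1}}$ to both sides then yields
\[
\alpha^{(gh)^{-1}hg}(x_0)=x_0 ,
\]
so $(gh)^{-1}hg\in\Stab_\alpha(x_0)=\{1_G\}$. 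Therefore $hg=gh$ for all $g\in G$, that is, $h\in\Cent(G)$, and so $\varphi=\alpha^h\in\alpha(\Cent(G))$.

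There is no real obstacle here. The only subtlety is that $\alpha$ need not be faithful, so the equality $\alpha^{hg}=\alpha^{gh}$ of maps does not directly give $hg=gh$ in $G$. The trivial stabilizer at the single point $x_0$ is exactly what is needed to pass from equality of the maps to equality of the group elements. Note also that we never need to check which representative $h$ of $\varphi$ was chosen: the argument shows that every $h$ with $\alpha^h=\varphi$ is central.
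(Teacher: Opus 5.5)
Your proof is correct and follows the same argument as the paper. For the reverse inclusion, you evaluate the commutation relation $\alpha^h\circ\alpha^g=\alpha^g\circ\alpha^h$ at $x_0$ and use the trivial stabilizer to conclude that the commutator of $g$ and $h$ is $1_G$. You also write out the easy inclusion $\alpha(\Cent(G))\subseteq\alpha(G)\cap\AutDyn(X,\alpha)$, which the paper leaves as straightforward.
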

\begin{proof}
    It is straightforward to check that $\alpha(\Cent(G))\subseteq \alpha(G)\cap \AutDyn(X,\alpha)$.

    Now, let $h\in G$ be such that $\alpha^h\in \AutDyn(X,\alpha)$.
    Then $\alpha^g\alpha^h=\alpha^h\alpha^g$ for every $g\in G$, so in particular
$\alpha^g\alpha^h(x_0)=\alpha^h\alpha^g(x_0)$.
This implies $g^{-1}h^{-1}gh=1_G$, and hence $h\in \Cent(G)$.
\end{proof}

Let $(X,\alpha,G)$ be a free dynamical system, so that $\alpha(G)$ can be identified with $G$.
The \textit{normalizer group} $\No(X,\alpha)$ of $(X,\alpha,G)$ is defined as the normalizer of $\alpha(G)$ in $\mbox{Homeo}(X)$.
That is, $$\No(X,\alpha):=\mbox{N}_{\Homeo(X)}(\alpha(G)).$$
This means that any $\phi\in \No(X,\alpha)$ is a self-homeomorphism of $X$ such that for each $g\in G$, there exists $h^\phi_g\in G$ with $\phi\circ \alpha^g=\alpha^{h^\phi_g}\circ \phi$.
The map $j_\phi \colon g \in G \mapsto h^\phi_g\in G$ is an element of $\Aut(G)$:
by the freeness assumption, it is straightforward to check that it is a group homomorphism, and it is injective, since $h^\phi_g=h^\phi_{g'}$ implies $\phi\circ\alpha^g=\alpha^{h^\phi_g}\circ\phi=\alpha^{h^\phi_{g'}}\circ\phi=\phi\circ\alpha^{g'}$, hence $\alpha^g=\alpha^{g'}$. Surjectivity follows from the fact that $\No(X,\alpha)$ normalizes $\alpha(G)$.
This lets us define a group homomorphism $j:\No(X,\alpha)\to \Aut(G)$ by $j(\phi)=j_\phi$, where $\phi\circ\alpha^g = \alpha^{j_\phi (g)}\circ \phi$.
Note that $\mbox{Ker}(j)=\AutDyn(X,\alpha)$.
The group of \textit{transformations of the system} $(X, \alpha, G)$ is the image of $j$, denoted
 $$\GLN(G,\alpha) = j(\No(X, \alpha) ) \quad  (\leqslant \Aut(G)).$$
 For $\psi\in \Aut(G)$, we denote by $\No(X,\alpha,\psi)$ the subset $j^{-1}(\psi)$ of $\No(X,\alpha)$ — that is, the elements $\phi\in \No(X,\alpha)$ such that $\phi\circ \alpha^g=\alpha^{\psi(g)}\circ\phi$ for every $g\in G$. Such a map $\phi$ is called a $\psi$-\textit{automorphism}.
 We summarize the above discussion in the short exact sequence of groups
\begin{align}\label{eq: shortexactsequence}
1\to \AutDyn (X,\alpha)\to \No(X,\alpha)\overset{j}{\to} \GLN(G,\alpha)\to 1.
\end{align}

Note that restricting to inner automorphisms of the group, we can always find corresponding elements in the normalizer.
\begin{lemma}
Let $(X,\alpha,G)$ be a dynamical system.
Then $\GLN(G, \alpha)$ contains every inner automorphism.
\end{lemma}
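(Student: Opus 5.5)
The natural witness is the action itself. Fix $h\in G$ and let $\iota_h\in\Aut(G)$ denote the inner automorphism $\iota_h(g)=hgh^{-1}$. I would take $\phi=\alpha^h$, which is a homeomorphism of $X$ and already lies in $\alpha(G)$. Since $\alpha$ is a left action, for every $g\in G$
\begin{align*}
\alpha^h\circ\alpha^g=\alpha^{hg}=\alpha^{hgh^{-1}}\circ\alpha^{h}=\alpha^{\iota_h(g)}\circ\alpha^h .
\end{align*}
Hence $\alpha^h$ conjugates $\alpha(G)$ into itself. Because $\alpha^{h^{-1}}$ does the same, we get $\alpha^h\,\alpha(G)\,(\alpha^h)^{-1}=\alpha(G)$, so $\alpha^h\in\No(X,\alpha)$.

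The identity above is exactly $\phi\circ\alpha^g=\alpha^{\iota_h(g)}\circ\phi$. So $\alpha^h\in\No(X,\alpha,\iota_h)$ and $j(\alpha^h)=\iota_h$, which gives $\iota_h\in\GLN(G,\alpha)$.

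The only point needing care is that $j$ is only well defined under the standing freeness assumption used to define $\No(X,\alpha)$ and $j$; the lemma is to be read in that setting. Freeness is what makes $j_\phi(g)$ uniquely determined. Without it, the computation still shows that $\alpha^h$ is a $\iota_h$-automorphism, which is all the statement needs. There is no genuine obstacle beyond this bookkeeping.

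In fact the argument shows more: $j$ restricted to $\alpha(G)$ is the map $G\to\operatorname{Inn}(G)$, $h\mapsto\iota_h$. Its kernel is $\Cent(G)$, which matches the previous lemma.
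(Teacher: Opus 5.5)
Your proof is correct and is essentially the paper's argument: the paper also takes the witness $\phi=\alpha^g$ and checks $\alpha^g\circ\alpha^h=\alpha^{ghg^{-1}}\circ\alpha^g$, which places $\phi$ in $\No(X,\alpha,\psi)$ for the inner automorphism $\psi(h)=ghg^{-1}$. Your remark that $j$ is only defined under the standing freeness assumption is a sensible clarification that the paper leaves implicit.
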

\begin{proof}
    Let $\psi$ be the inner automorphism induced by an element $g\in G$, i.e., $\psi(h)=ghg^{-1}$ for each $h\in G$.
    Define $\phi: X\to X$ by $\phi(x)=\alpha^gx$; this map is a homeomorphism.
    Moreover, for each $h\in G$ and $x\in X$,
    \begin{align*}
        \phi(\alpha^h x) = \alpha^g(\alpha^h x) = \alpha^{ghg^{-1}}(\alpha^g x) = \alpha^{\psi(h)}\phi(x),
    \end{align*}
    which shows that $\phi\in \No(X,\alpha,\psi)$.
\end{proof}

Let $\pi: X\to Y$ be a factor map between two dynamical systems $(X,\alpha, G)$ and $(Y,\beta, G)$.
    We say that $\pi$ is \textit{compatible} with $\phi\in \mbox{Homeo}(X)$ if $\pi(\phi(x))=\pi(\phi(x'))$ whenever $\pi(x)=\pi(x')$, for every $x,x'\in X$. This allows us to define a map $\hat\pi(\phi) \colon Y \to Y$ by $\hat\pi(\phi) (\pi(x)) = \pi(\phi(x))$.
    For a subgroup $S\leqslant \mbox{Homeo}(X)$, we say that $\pi$ is \textit{compatible} with $S$ if it is compatible with every $\phi\in S$.
    In this case, $\pi$ is also compatible with $\phi^{-1}$, so $\hat\pi(\phi)$ is invertible, and the map $\hat{\pi}: \phi \in S \mapsto  \hat\pi(\phi) \in \mbox{Homeo}(Y)$ is a homomorphism.

Recall that every dynamical system $(X,\alpha, G)$ admits a \textit{maximal equicontinuous} factor $(X_{eq},\beta, G)$: an equicontinuous factor that is maximal in the sense that for any other equicontinuous factor $\pi' \colon (X, \alpha, G) \to (Y,\beta',G)$ of $(X,\alpha, G)$, there is a factor map $\zeta \colon (X_{eq},\beta,G) \to (Y,\beta', G)$ such that $\pi' = \zeta \circ \pi$.
The maximal equicontinuous factor is unique up to conjugacy. If $(X, \alpha, G)$ is an almost $1$-$1$ extension of an equicontinuous system, then that equicontinuous system is its maximal equicontinuous factor.

From these notions we derive the following result, which extends \cite[Lemma 2.3]{CaPe23}; it can likewise be generalized to other maximal factors (e.g., the maximal zero-entropy factor).
\begin{proposition}\label{Prop:MEF-implies-compatible}
Let $(X,\alpha, G)$ be a minimal dynamical system, and let $\pi:(X,\alpha, G) \to (X_{eq}, \beta, G)$ be the factor map onto its maximal equicontinuous factor.
Then:
    \begin{enumerate}
        \item $\pi$ is compatible with $\No(X,\alpha)$;
        \item $\hat{\pi}(\No(X,\alpha,\psi))\subseteq \No(X_{eq},\beta,\psi)$ for every $\psi\in \Aut(G)$;
        \item If $\pi$ is almost $1$-$1$, then the map $\hat{\pi}:\No(X,\alpha)\to \No(X_{eq},\beta)$ is injective.
    \end{enumerate}
\end{proposition}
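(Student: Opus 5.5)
The plan for (1) is to use the universal property of the maximal equicontinuous factor, twisted by the automorphism $\psi = j(\phi)$. Fix $\phi\in\No(X,\alpha,\psi)$ and consider the map $\pi\circ\phi\colon X\to X_{eq}$. For $g\in G$ we have
\[
\pi(\phi(\alpha^g x))=\pi(\alpha^{\psi(g)}\phi(x))=\beta^{\psi(g)}\pi(\phi(x)).
\]
Hence $\pi\circ\phi$ is a factor map from $(X,\alpha,G)$ onto the system $(X_{eq},\beta\circ\psi,G)$, where $(\beta\circ\psi)^g:=\beta^{\psi(g)}$. It is surjective because $\phi$ and $\pi$ are.

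The action $\beta\circ\psi$ is equicontinuous, since the family $\{\beta^{\psi(g)}\}_{g\in G}=\beta(G)$ is the same family of maps. By maximality there is then a factor map $\zeta\colon(X_{eq},\beta,G)\to(X_{eq},\beta\circ\psi,G)$ with $\pi\circ\phi=\zeta\circ\pi$. This gives compatibility: if $\pi(x)=\pi(x')$, then $\pi\phi(x)=\zeta\pi(x)=\zeta\pi(x')=\pi\phi(x')$. In fact $\hat\pi(\phi)=\zeta$.

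For (2), I will use the remark in the preliminaries that compatibility with the group $\No(X,\alpha)$ makes $\hat\pi(\phi)$ a homeomorphism. Continuity is also immediate since $\hat\pi(\phi)=\zeta$. The twisted relation then descends:
\[
\hat\pi(\phi)\bigl(\beta^g\pi(x)\bigr)=\pi(\phi\alpha^g x)=\beta^{\psi(g)}\hat\pi(\phi)(\pi(x)).
\]
This shows $\hat\pi(\phi)\circ\beta^g=\beta^{\psi(g)}\circ\hat\pi(\phi)$, i.e. $\hat\pi(\phi)\in\No(X_{eq},\beta,\psi)$.

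For (3), since $\hat\pi$ is a homomorphism it suffices to show that $\hat\pi(\phi)=\mathrm{id}$ forces $\phi=\mathrm{id}$. Suppose $\hat\pi(\phi)=\mathrm{id}$, and let $Y_0=\{y:|\pi^{-1}(y)|=1\}$, which is dense in $X_{eq}$. For $x\in\pi^{-1}(Y_0)$ we get $\pi(\phi(x))=\pi(x)$, so $\phi(x)=x$ because the fiber is a singleton. The step I expect to be the main obstacle is showing that $\pi^{-1}(Y_0)$ is dense in $X$, since density of $Y_0$ in $X_{eq}$ does not immediately give this.

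To handle it, I will show $Y_0$ is $\beta$-invariant: $\beta^g$ maps fibers bijectively onto fibers via $\alpha^g$. Then $\pi^{-1}(Y_0)$ is a nonempty $\alpha$-invariant set, so by minimality of $X$ it contains a dense orbit and is dense. Since $\phi$ is continuous and equals the identity on a dense set, $\phi=\mathrm{id}$. This proves injectivity.
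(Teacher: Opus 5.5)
Your proposal is correct and follows the paper's proof essentially step for step. Part (1) uses maximality applied to the twisted equicontinuous action $g\mapsto\beta^{\psi(g)}$, part (2) is the same descent computation, and part (3) uses that the orbit of a point with singleton fiber consists of such points, is dense by minimality, and is fixed by $\phi$. Your extra remarks add explicit detail rather than a different route: you note that $\hat\pi(\phi)=\zeta$ is continuous and hence a homeomorphism, and you correctly write $\beta^{\psi(g)}$ where the paper has the typo $\alpha^{\psi(g)}$.
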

\begin{proof}
Let $\phi \in \No(X,\alpha)$. Observe that $\pi \circ \phi \colon X \to X_{eq}$ is a factor map onto the system defined by the action $\beta'^{g} = \beta^{j(\phi)(g)}$, $g \in G$, and note that the family of maps $\beta(G)= \beta'(G)$ is equicontinuous. By maximality, there is a factor map $\zeta\colon  (X_{eq}, \beta, G) \to (X_{eq}, \beta', G)$ such that $\pi\circ \phi = \zeta \circ \pi$. It follows that $\pi$ is compatible with every $\phi \in \No(X,\alpha)$, proving (1).

Moreover, for any $\phi\in \No(X,\alpha,\psi)$ we have $$\hat{\pi}(\phi)(\beta^g\pi(x))=\hat{\pi}(\phi)(\pi(\alpha^g x))=\pi(\phi(\alpha^g x))=\alpha^{\psi(g)}(\pi(\phi(x)))=\alpha^{\psi(g)}\hat{\pi}(\phi)(\pi(x)),$$
which shows that $\hat{\pi}(\phi)\in \No(Y,\beta,\psi)$, proving (2).

For (3), assume $\pi$ is an almost $1$-$1$ factor map with $\pi(x)\in X_{eq}$ an element with a single pre-image, and suppose $\hat{\pi}(\phi)={\rm Id}_{X_{eq}}$ for some $\phi\in \No(X,\alpha,\psi)$, $\psi\in \Aut(G)$.
For every $g\in G$ we have $$\pi(\alpha^g x)=\hat{\pi}(\phi)(\pi(\alpha^g x))=\pi(\phi(\alpha^g x)).$$
Since $\pi(x)$ has a single pre-image, so does $\alpha^g \pi(x)$; consequently, $\alpha^gx=\phi(\alpha^g x)$ for every $g\in G$.
Since, by minimality, the orbit $O_\alpha(x)$ is dense in $X$, this gives $z=\phi(z)$ for every $z\in X$, i.e., $\phi={\rm Id}_X$, which proves (3).
\end{proof}
Restricting to the group $\AutDyn(X,\alpha)$ yields the following immediate corollary.

\begin{corollary}\label{Coro:injectiveAut}
Let $(X,\alpha, G)$ be a minimal dynamical system and $\pi \colon (X,\alpha, G) \to (X_{eq}, \beta, G)$ an almost $1$-$1$ factor map onto an equicontinuous system.
Then the homomorphism $\hat\pi \colon \AutDyn(X,\alpha)\to \AutDyn(X_{eq},\beta)$ is injective.
\end{corollary}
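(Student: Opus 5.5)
The plan is to obtain the corollary by restricting Proposition~\ref{Prop:MEF-implies-compatible} to the kernel of $j$, i.e.\ to $\AutDyn(X,\alpha)=\No(X,\alpha,{\rm Id}_G)$.

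First, I will identify the target system with the maximal equicontinuous factor. By the remark preceding Proposition~\ref{Prop:MEF-implies-compatible}, an almost $1$-$1$ extension of an equicontinuous system has that system as its maximal equicontinuous factor. So $(X_{eq},\beta,G)$ is, up to conjugacy, the maximal equicontinuous factor of $(X,\alpha,G)$, and $\pi$ can be taken to be the corresponding factor map. Strictly, one needs that $\pi$ itself has the maximality property, not merely that some conjugate system does. I will handle this by composing the canonical factor map with the conjugacy $\zeta$ given by maximality. This step is the only mildly delicate point.

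Second, I will check that $\hat\pi$ is well defined and lands in the right group. Part (1) of Proposition~\ref{Prop:MEF-implies-compatible} shows that $\pi$ is compatible with $\No(X,\alpha)\supseteq\AutDyn(X,\alpha)$, so $\hat\pi$ is a well-defined homomorphism. Part (2) with $\psi={\rm Id}_G$ gives $\hat\pi(\AutDyn(X,\alpha))\subseteq\No(X_{eq},\beta,{\rm Id}_G)=\AutDyn(X_{eq},\beta)$.

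Third, I will prove injectivity. This follows from part (3) of Proposition~\ref{Prop:MEF-implies-compatible}, since $\pi$ is almost $1$-$1$ and the restriction of an injective map is injective.

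If the freeness implicitly used to define $\No$ is a concern, I would instead argue directly, repeating the proof of (1) and (3) for automorphisms only. In that case $\pi\circ\varphi$ is a factor map onto $(X_{eq},\beta,G)$ itself, so maximality gives compatibility with no appeal to $j$. For injectivity, take $\pi(x)$ with a single preimage. If $\hat\pi(\varphi)={\rm Id}$, then $\pi(\varphi(\alpha^g x))=\pi(\alpha^g x)$ for all $g$. Each $\beta^g\pi(x)$ also has a single preimage, so $\varphi(\alpha^g x)=\alpha^g x$. Minimality then gives $\varphi={\rm Id}$.
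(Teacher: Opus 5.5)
Your proposal is correct and follows the paper, which obtains the corollary as an immediate consequence of restricting Proposition~\ref{Prop:MEF-implies-compatible} to $\AutDyn(X,\alpha)$. You additionally identify $\pi$ with the maximal equicontinuous factor map, and your direct argument avoids the freeness implicit in the definition of $\No$; both fill in points the paper leaves implicit without changing the route.
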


The following lemma will be key for the rest of the paper. It states that for factor maps $\pi$ compatible with $\No(X,\alpha)$, the maps $\hat\pi(\No(X,\alpha))$ preserve the cardinality of fibers.

\begin{lemma}\label{lemma: fibers}
    Let $(X,\alpha,G)$ and $(Y,\beta,G)$ be two minimal dynamical systems such that $\pi:X\to Y$ is a factor map compatible with a subgroup $S\leqslant \Homeo(X)$.
    Then, for any $\phi\in S$,
    \begin{align*}\{y\in Y: |\pi^{-1}(y)|=n\}=
        \hat{\pi}(\phi)(\{y\in Y: |\pi^{-1}(y)|=n\})\quad\text{for every }n\in \mathbb{N}.
    \end{align*}
\end{lemma}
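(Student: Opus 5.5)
The plan is to show that for each $\phi\in S$ and each $y\in Y$ the map $\phi$ restricts to a bijection between the fibers $\pi^{-1}(y)$ and $\pi^{-1}(\hat\pi(\phi)(y))$. Then $|\pi^{-1}(y)|=|\pi^{-1}(\hat\pi(\phi)(y))|$ for every $y$, cardinalities counted in $\mathbb{N}\cup\{\infty\}$. Writing $Y_n=\{y\in Y: |\pi^{-1}(y)|=n\}$, this gives $\hat\pi(\phi)(Y_n)\subseteq Y_n$.

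First I check that $\phi(\pi^{-1}(y))\subseteq \pi^{-1}(\hat\pi(\phi)(y))$. If $\pi(x)=y$, then by the definition of $\hat\pi(\phi)$ (well defined by compatibility) we get $\pi(\phi(x))=\hat\pi(\phi)(\pi(x))=\hat\pi(\phi)(y)$. Hence $\phi$ maps the fiber over $y$ into the fiber over $\hat\pi(\phi)(y)$, injectively, since $\phi$ is a homeomorphism.

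Next, since $S$ is a subgroup, $\phi^{-1}\in S$ and $\pi$ is compatible with $\phi^{-1}$. The same argument shows that $\phi^{-1}$ maps $\pi^{-1}(\hat\pi(\phi)(y))$ into $\pi^{-1}(\hat\pi(\phi^{-1})(\hat\pi(\phi)(y)))$. Because $\hat\pi$ is a homomorphism on $S$ (noted in the paper just before Proposition~\ref{Prop:MEF-implies-compatible}), $\hat\pi(\phi^{-1})\hat\pi(\phi)=\hat\pi(\mathrm{Id})=\mathrm{Id}_Y$. So $\phi^{-1}$ maps the fiber over $\hat\pi(\phi)(y)$ back into the fiber over $y$. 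The two inclusions together give the desired bijection of fibers, and hence $\hat\pi(\phi)(Y_n)\subseteq Y_n$.

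Applying this to $\phi^{-1}$ gives $\hat\pi(\phi)^{-1}(Y_n)=\hat\pi(\phi^{-1})(Y_n)\subseteq Y_n$, that is, $Y_n\subseteq\hat\pi(\phi)(Y_n)$. This yields the equality $\hat\pi(\phi)(Y_n)=Y_n$.

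The main point needing care is that $\hat\pi(\phi)$ is invertible with inverse $\hat\pi(\phi^{-1})$. This rests on $S$ being a group and on $\pi$ being surjective, so that $\hat\pi(\phi)$ is defined on all of $Y$. Minimality is not really needed for this argument beyond guaranteeing the setting.
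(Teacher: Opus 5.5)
Your proposal is correct and follows essentially the paper's route. Both arguments show that $\phi$ and $\phi^{-1}$ map the fiber over a point onto the fiber over its image under $\hat\pi(\phi)$, and then swap $\phi$ and $\phi^{-1}$ to get the reverse inclusion. The paper phrases this as pulling back the $n$ points of $\pi^{-1}(y)$ and counting, while you present it as a pair of mutually inverse maps between fibers, which also handles infinite fibers and makes plain that minimality is never used.
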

\begin{proof}
    Let $n\in\mathbb{N}$ and $y\in\{y\in Y:|\pi^{-1}(y)|=n\}$.
    There exist $n$ distinct elements $x_1,x_2,\dots, x_n\in X$ with $\pi(x_i)=y$.
    Since $\phi\in S$, there exist $x_1',\dots,x_n'\in X$ with $\phi(x_i')=x_i$.
    Note that $\hat{\pi}(\phi)(\pi(x'_i)) = \pi(\phi(x'_i))=y$ for each $i\in\{1, \ldots, n\}$.
    Thus, for $z= \hat{\pi}(\phi)^{-1}(y) \in Y$, we have $\{x_1',\dots, x_n'\}\subseteq \pi^{-1}(\{z\})$.
    Moreover, for any $x'\in \pi^{-1}(\{z\})$ we have $\pi (\phi(x')) = \hat{\pi}(\phi)(\pi(x'))= \hat{\pi}(\phi)(z) = y$, so $\phi(x')\in\pi^{-1}(\{y\})$.
    The injectivity of $\phi$ then gives $z\in\{y\in Y:|\pi^{-1}(y)|=n\}$.
    The reverse inclusion follows the same way, replacing $\phi$ by $\phi^{-1}$.
\end{proof}

\subsection{Odometers}\label{Subsection: odometers}We recall here some background on a specific equicontinuous dynamical system called the \textit{odometer} (see \cite{CoPe08}). From now on, the group $G$ is assumed to be countable.
We say that $(G_n)_{n\geq 0}$ is a \textit{scale} if it is a decreasing sequence of normal, finite-index subgroups of $G$ with $\bigcap_n G_n = \{1_G\}$; in particular, this forces $G$ to be residually finite.
The \textit{(exact) odometer} associated with $(G_n)_{n\geq 0}$ is the system $( G_{(G_n)},\alpha,G)$ (or simply $G_{(G_n)}$), defined as
\begin{align*}
     G_{(G_n)}:= \{(G_n\xi_n)_{n\geq 0}\in \prod_{n=0}^\infty G/ G_n  \mid \xi_n G_n= \xi_{n+1}G_n, n\geq 0\},
\end{align*}
where $\alpha$ is the component-wise multiplication of $G$ on $ G_{(G_n)}$.
This system is an equicontinuous, minimal, free dynamical system, equipped with a group structure induced by that of $\prod_{n\geq 0}G/G_n$, which contains a dense copy of $G$ via the injective group homomorphism $\tilde{}: G\to  G_{(G_n)}$, $\tilde{g}=(gG_n)_{n\geq 0}$. We denote by $\tilde{G}$ the image of this map in $G_{(G_n)}$.

For $n\in\mathbb{N}$, we denote by $C_n$ the \textit{cylinder of level $n$} of $ G_{(G_n)}$ centered at $\tilde{1_G}= 1_{G_{(G_n)}}$, i.e.,
\begin{align}\label{eq: cylinders}
    C_n=\{(\xi_nG_n)_{n\geq 0}\in G_{(G_n)}: \xi_nG_n= G_n\}.
\end{align}
It is straightforward to check that the topology of $ G_{(G_n)}$ is generated by the collection $\{\alpha^gC_n: g\in G/G_n, n\geq 0\}$.

We denote by $\rho$ the metric on $G_{(G_n)}$ given by
\begin{align}\label{eq: metric-odometer}
    \rho((\xi_nG_n)_{n\geq 0},(\zeta_nG_n)_{n\geq 0})=2^{-k}, \mbox{ where }k=\sup\{i\geq 0: \xi_iG_i=\zeta_iG_i\}.
\end{align}
Note that $\rho$ is an ultrametric; in particular, every triple of points forms an isosceles triangle.

\subsection{Semicocycle systems and Toeplitz subshifts}\label{Subsec: Semicocycles and Toeplitz}
In this subsection, we present some facts about semicocycle extensions of odometers that will be useful throughout the paper.
A detailed presentation of this topic, in a more general framework, can be found in \cite{FuKw19}.

Let $(G_n)_{n\geq 0}$ be a scale and $\bm{\xi}\in G_{(G_n)}$.
The pair $( G_{(G_n)}, \bm{\xi})$ is called a \textit{pointed dynamical system}.
Let $K$ be a compact Hausdorff space and $(K^G, \sigma, G)$ the shift space with alphabet $K$, where $\sigma:G\to \Homeo(K^G)$ is defined by $\sigma^g(y)(h)=y(hg)$ for every $g,h\in G$; this defines a (left) continuous action of $G$ on $K^G$.
A \textit{semicocycle} over a pointed system $( G_{(G_n)}, \bm{\xi})$ is a continuous map $f:O_\alpha(\bm{\xi})\to K$.
We denote by $F$ the closure of the graph of $f$, i.e.,
$$F=\overline{\{(\bm{\zeta},k)\in  G_{(G_n)}\times K: \bm{\zeta}\in O_\alpha(\bm{\xi}), k=f(\bm{\zeta}) \}}.$$
For $\bm{\zeta}\in  G_{(G_n)}$, the \textit{$\bm{\zeta}$-section} is the set $F(\bm{\zeta})=\{k\in K: (\bm{\zeta},k)\in F\}$.
The semicocycle $f$ is said to be \textit{invariant under no rotation} if $F(\tilde g\bm{\zeta}_1)= F(\tilde g \bm{\zeta}_2)$ for every $g\in G$ implies $\bm{\zeta}_1=\bm{\zeta}_2$.
If $f$ is a semicocycle over $( G_{(G_n)}, \bm{\xi})$, we denote by $D_f$ the set of discontinuity points of $f$, i.e.,
\begin{align*}
    D_f=\{\bm{\zeta}\in G_{(G_n)}: |F(\bm{\zeta})|>1\},
\end{align*}
and by $X_{f,\bm{\xi}}$ the \textit{dynamical system associated with $f$ and $\bm{\xi}$} — the subshift of $K^G$ given by
\begin{align}\label{eq:semicocyclesystem}
    X_{f,\bm{\xi}}=\overline{O_\sigma(x_{f, \bm{\xi}})}\subseteq K^G, \mbox{ where }x_{f,\bm{\xi}}(h)=f(\tilde h\bm{\xi}), h\in G.
\end{align}
When the pointed system is clear from context, we drop the subscript $\bm{\xi}$ from $X_{f,\bm{\xi}}$ and $x_{f,\bm{\xi}}$.
\begin{lemma}[{\cite[Theorem 6]{CoPe08}}{\cite[Theorem 2.5]{FuKw19}}]\label{lemma:invariantnorotation} Let $( G_{(G_n)}, \bm{\xi})$ be a pointed system and $f$ a semicocycle over $( G_{(G_n)}, \bm{\xi})$.
If $f$ is invariant under no rotation, then $X_{f,\bm{\xi}}$ is a minimal almost automorphic extension of $ G_{(G_n)}$.
\end{lemma}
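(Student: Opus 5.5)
The plan is to build an explicit factor map $\pi\colon X_{f,\bm{\xi}}\to G_{(G_n)}$ that is one-to-one over the orbit of $\bm{\xi}$, and to get minimality from this. The natural candidate comes from $F$. For $\bm{\zeta}\in G_{(G_n)}$, define $y_{\bm{\zeta}}$ as the set-valued configuration $h\mapsto F(\tilde h\bm{\zeta})$. For points of $X_{f,\bm{\xi}}$, I will show that every $x\in X_{f,\bm{\xi}}$ satisfies $x(h)\in F(\tilde h\bm{\zeta})$ for all $h$, for some $\bm{\zeta}$. This $\bm{\zeta}$ is obtained as follows. Write $x=\lim \sigma^{g_k}x_{f}$ and pass to a subsequence so that $\tilde g_k\bm{\xi}\to\bm{\zeta}$, using compactness of the odometer. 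Then $x(h)=\lim f(\tilde h\tilde g_k\bm{\xi})$, which lies in $F(\tilde h\bm{\zeta})$ by closedness of $F$ and continuity of multiplication. Here I must take care with the convention $\sigma^g y(h)=y(hg)$, so that the rotations match: $x_f(hg)=f(\widetilde{hg}\bm{\xi})=f(\tilde h\tilde g\bm{\xi})$.

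Next I show that $\bm{\zeta}$ is uniquely determined by $x$; this is where invariance under no rotation enters. Suppose $x$ is compatible with both $\bm{\zeta}_1$ and $\bm{\zeta}_2$. I want to deduce $F(\tilde g\bm{\zeta}_1)=F(\tilde g\bm{\zeta}_2)$ for all $g$. Knowing that $x(g)$ lies in both sections is not enough. Instead, I use the full orbit closure: the set $\{x'(g): x'\in X_{f,\bm{\xi}}$ compatible with $\bm{\zeta}\}$ should equal $F(\tilde g\bm{\zeta})$. I would therefore define $\pi(x)=\bm{\zeta}$ via a cleaner route. Continuity points of $f$ are dense: by Baire, $f$ restricted to the closure has $|F|=1$ on a residual set, using upper semicontinuity of $\bm{\zeta}\mapsto F(\bm{\zeta})$, and the orbit $O_\alpha(\bm\xi)$ itself consists of continuity points since $f$ is continuous on the orbit. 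Now let $D$ be the set of $g$ with $|F(\tilde g\bm{\zeta}_i)|=1$ for $i=1,2$; on $D$, $x(g)$ determines both sections equally. I will show that agreement on a large set of $g$ forces agreement of all sections, by approximating $\bm{\zeta}_2$ along sequences and using closedness of $F$. This uniqueness step is the main obstacle. If the direct argument fails, I would first try to prove uniqueness by using that the map $\bm{\zeta}\mapsto(F(\tilde g\bm{\zeta}))_g$ is upper semicontinuous, and then apply the hypothesis to limit points.

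Given well-definedness, $\pi$ is equivariant, since compatibility transforms correctly under $\sigma^g$ and $\alpha^g$. It is continuous by a closed-graph argument: the set of compatible pairs $(x,\bm{\zeta})$ is closed, and the odometer is compact. It is surjective because its image is closed and contains the dense orbit of $\bm{\xi}$, with $\pi(\sigma^g x_f)=\tilde g\bm{\xi}$. Moreover, $\pi^{-1}(\bm{\xi})=\{x_f\}$: any compatible $x$ has $x(h)\in F(\tilde h\bm{\xi})=\{f(\tilde h\bm{\xi})\}$, since these are continuity points. Hence $\pi$ is almost one-to-one.

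Minimality then follows from the standard argument. Let $Y\subseteq X_{f,\bm{\xi}}$ be a nonempty closed invariant set. Then $\pi(Y)$ is closed and invariant in the minimal odometer, so $\pi(Y)=G_{(G_n)}$, and hence $Y$ contains $\pi^{-1}(\bm{\xi})=\{x_f\}$. Therefore $Y\supseteq\overline{O_\sigma(x_f)}=X_{f,\bm{\xi}}$. Almost automorphy, meaning that the extension is almost one-to-one over the equicontinuous odometer, then follows as well, and the odometer is the maximal equicontinuous factor.
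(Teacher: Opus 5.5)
The paper gives no proof of this lemma; it cites the two sources. Judged on its own, your proposal has a genuine gap exactly where the hypothesis must enter: the uniqueness of $\bm{\zeta}$. Everything else is formal once $\pi$ is well defined: equivariance, continuity by closed graph, surjectivity, $\pi^{-1}(\bm{\xi})=\{x_{f,\bm{\xi}}\}$, and the minimality argument. For uniqueness, however, you only announce a plan and a fallback.

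The relation you chose is also too weak to carry that plan in the stated generality. Compatibility of $x$ with $\bm\zeta_1$ and $\bm\zeta_2$ only gives $x(g)\in F(\tilde g\bm\zeta_1)\cap F(\tilde g\bm\zeta_2)$, whereas invariance under no rotation is a statement about \emph{equality} of all sections. Your proposed bridge, the set $D$ of $g$ at which both sections are singletons, can be empty. For a general compact $K$, the set $D_f=\bigcup_{m}\{\bm\omega:\mathrm{diam}\,F(\bm\omega)\geq 1/m\}$ is only an $F_\sigma$, and a whole countable orbit may lie inside it. Baire category makes continuity points residual, but it says nothing about a prescribed countable orbit. (When $K$ is finite, $D_f$ is closed and $f$ extends to a locally constant map on the dense open set $G_{(G_n)}\setminus D_f$, so a density argument along your lines can be completed. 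But that argument has to be carried out, and it does not cover the lemma as stated.)

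The fix is to keep track of how $\bm\zeta$ arises, which your first step already does. Work with the orbit closure $Z=\overline{\{(\sigma^g x_{f,\bm\xi},\tilde g\bm\xi): g\in G\}}\subseteq X_{f,\bm\xi}\times G_{(G_n)}$ instead of the compatibility relation. Three facts suffice.

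\textbf{(a)} If $(x,\bm\xi)\in Z$, then $x=x_{f,\bm\xi}$, because $F(\tilde h\bm\xi)=\{f(\tilde h\bm\xi)\}$.

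\textbf{(b)} For every $\bm\omega$ and $h$, $\{x(h):(x,\bm\omega)\in Z\}=F(\tilde h\bm\omega)$. For the inclusion $\supseteq$, given $\tilde g_j\bm\xi\to\tilde h\bm\omega$ with $f(\tilde g_j\bm\xi)\to k$, take a limit point of $(\sigma^{h^{-1}g_j}x_{f,\bm\xi},\tilde h^{-1}\tilde g_j\bm\xi)$.

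\textbf{(c)} Suppose $(x,\bm\zeta_1),(x,\bm\zeta_2)\in Z$. Put $\bm\eta=\bm\zeta_1^{-1}\bm\zeta_2$ and choose $g_k$ with $\tilde g_k\bm\zeta_1\to\bm\xi$. Since $\tilde g_k\bm\zeta_2=\tilde g_k\bm\zeta_1\bm\eta$, a limit point of $\sigma^{g_k}x$ together with (a) yields $(x_{f,\bm\xi},\bm\xi\bm\eta)\in Z$. Right multiplication commutes with the left $G$-action, so this gives $(\mathrm{id}\times R_{\bm\eta})(Z)\subseteq Z$. Exchanging the roles of $\bm\zeta_1$ and $\bm\zeta_2$ gives the same inclusion for $\bm\eta^{-1}$.

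By (b) and (c), $F(\tilde h\bm\zeta_1)=F(\tilde h\bm\zeta_1\bm\eta)=F(\tilde h\bm\zeta_2)$ for all $h$, and invariance under no rotation forces $\bm\zeta_1=\bm\zeta_2$. Hence $Z$ is the graph of a continuous equivariant surjection $\pi$, and the rest of your argument goes through unchanged. Incidentally, minimality does not need the hypothesis at all: projecting $Z\cap(Y\times G_{(G_n)})$ onto the odometer shows that every nonempty closed invariant $Y$ contains $x_{f,\bm\xi}$.
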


Let $(G_n)_{n\geq 0}$ be a scale and $(G_{(G_n)},\bm{\xi})$ a pointed dynamical system.
A system $(X_{f,\bm{\xi}},\sigma, G)$ is called a \textit{semicocycle extension} of $G_{(G_n)}$.
Recall that the factor map $\pi:X_{f,\bm{\xi}}\to G_{(G_n)}$ is given as follows.
Let $x\in X_f$. There exists a sequence $(h_n)_{n\geq 0}\subseteq G$ such that $(f(\alpha^g\alpha^{h_n}\bm{\xi}))_{n\geq 0}$ converges to $x(g)$ for each $g\in G$.
Let $\bm{\zeta}_x$ be an accumulation point of $(\alpha^{h_n}\bm{\xi})_{n\geq 0}$, and set $\pi(x)=\bm{\zeta}_x$.
Then $x(g)\in F(\alpha^g\bm{\zeta}_x)=F(\alpha^g\pi(x))$.
See \cite{FuKw19} for a proof of these facts.

\begin{proposition}
Let $(X_{f,\bm{\xi}},\sigma,G)$ be a semicocycle extension of $ G_{(G_n)}$.
It holds true that
$\bigcup_{g\in G}\alpha^gD_f=\{\bm{\zeta}\in  G_{(G_n)}:|\pi^{-1}(\bm{\zeta})|>1\}$.
\end{proposition}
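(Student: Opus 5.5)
The plan is to prove the two inclusions separately. Both rest on two facts recalled just before the statement, from \cite{FuKw19}. First, for every $x\in X_{f,\bm{\xi}}$ and every $g\in G$ we have $x(g)\in F(\alpha^g\pi(x))$. Second, $\pi$ is a continuous factor map, so $\pi(\sigma^m x)=\alpha^m\pi(x)$. I will also use that $\pi(x_{f})=\bm{\xi}$, which follows from the description of $\pi$ by taking the constant sequence $h_n=1_G$.

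\emph{Inclusion $\supseteq$.} Let $\bm{\zeta}$ satisfy $|\pi^{-1}(\bm{\zeta})|>1$, and suppose for contradiction that $\alpha^g\bm{\zeta}\notin D_f$ for every $g\in G$. Then each section $F(\alpha^g\bm{\zeta})$ is a singleton $\{k_g\}$. It is nonempty because $O_\alpha(\bm{\xi})$ is dense (the odometer is minimal) and $K$ is compact. Any $x\in\pi^{-1}(\bm{\zeta})$ then satisfies $x(g)\in F(\alpha^g\bm{\zeta})=\{k_g\}$ for all $g$. Hence $x$ is uniquely determined as the configuration $g\mapsto k_g$, so the fiber is a singleton, which is a contradiction. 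Therefore $\bm{\zeta}\in\alpha^{g^{-1}}D_f$ for some $g$.

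\emph{Inclusion $\subseteq$.} Since the right-hand set is $\alpha(G)$-invariant (by equivariance of $\pi$), it suffices to take $\bm{\zeta}$ with $\alpha^g\bm{\zeta}\in D_f$ for some $g\in G$ and produce two distinct points in $\pi^{-1}(\bm{\zeta})$.
\begin{enumerate}
\item Pick $k_1\neq k_2$ in $F(\alpha^g\bm{\zeta})$.
\item By the definition of $F$ as the closure of the graph, choose sequences $(h_n^{(i)})\subseteq G$ with $\alpha^{h^{(i)}_n}\bm{\xi}\to\alpha^g\bm{\zeta}$ and $f(\alpha^{h^{(i)}_n}\bm{\xi})\to k_i$, for $i=1,2$.
\item Set $m^{(i)}_n=g^{-1}h^{(i)}_n$. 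With the conventions $x_f(h)=f(\tilde h\bm{\xi})$ and $\sigma^m y(h)=y(hm)$, we get
\[
\sigma^{m^{(i)}_n}x_f(g)=f(\tilde g\,\tilde m^{(i)}_n\bm{\xi})=f(\alpha^{h^{(i)}_n}\bm{\xi})\to k_i .
\]
\item We also have $\pi(\sigma^{m^{(i)}_n}x_f)=\alpha^{m^{(i)}_n}\bm{\xi}\to\alpha^{g^{-1}}\alpha^g\bm{\zeta}=\bm{\zeta}$.
\item By compactness of $K^G$, pass to a subsequence so that $\sigma^{m^{(i)}_n}x_f\to x_i\in X_{f,\bm{\xi}}$.
\end{enumerate}
Continuity of $\pi$ gives $\pi(x_i)=\bm{\zeta}$. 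Continuity of evaluation at $g$ gives $x_i(g)=k_i$. Hence $x_1\neq x_2$ both lie in $\pi^{-1}(\bm{\zeta})$.

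The only delicate point is bookkeeping: one must match the left action $\alpha$ on the odometer with the right-multiplication shift convention $\sigma^m y(h)=y(hm)$, so that the translate $g^{-1}h_n$ is the correct one. One also needs the continuity and equivariance of $\pi$, which I take from \cite{FuKw19} as recalled above. Everything else is a direct compactness argument.
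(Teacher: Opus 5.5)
Your proof is correct and follows the paper's argument. For $\supseteq$ you use the same coordinatewise observation $x(g)\in F(\alpha^g\bm{\zeta})$, phrased contrapositively. For $\subseteq$ you take two approximating sequences from the closure of the graph and pass to limit points of shifts of $x_{f,\bm{\xi}}$, as the paper does; the only cosmetic difference is that shifting by $g^{-1}h_n$ puts your two limit points directly over $\bm{\zeta}$, whereas the paper builds them over $\alpha^{h^{-1}}\bm{\zeta}$ and moves along the orbit.
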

\begin{proof}   
If $\bm{\zeta}\in \{\bm{\zeta}\in  G_{(G_n)}:|\pi^{-1}(\bm{\zeta})|>1\}$, there exist distinct elements $x,y\in\pi^{-1}(\{\bm{\zeta}\})$.
Consequently, for some $g\in G$ we have that $x(g)\neq y(g)$.
As $x(g),y(g)\in F(\alpha^g\pi(\bm{\zeta}))$, we conclude that $\alpha^g\pi(\bm{\zeta})\in D_f$ and hence, $\pi(\bm{\zeta})\in \alpha^{g^{-1}}D_f$.

On the other side, suppose that $\bm{\zeta}\in\bigcup_{g\in G}\alpha^g D_f$.
Therefore, for some $h\in G$, we have that $\bm{\zeta}\in \alpha^hD_f$. 
This implies that $\alpha^{h^{-1}}\bm{\zeta}\in D_f$ and hence, $|F(\alpha^{h^{-1}}\bm{\zeta})|>1$.
Thus, $(\alpha^{h^{-1}}\bm{\zeta},k_0),(\alpha^{h^{-1}}\bm{\zeta},k_1)\in F$, for some distinct elements $k_0,k_1\in K$.
So, there exist $(g_n)_{n\geq 0}$ and $(h_n)_{n\geq 0}$, sequences of elements in $G$, such that $(\alpha^{g_n}\bm{\xi},f(\alpha^{g_n}\bm{\xi}))\to (\alpha^{h^{-1}}\bm{\zeta},k_0)$ and $(\alpha^{h_n}\bm{\xi},f(\alpha^{h_n}\bm{\xi}))\to (\alpha^{h^{-1}}\bm{\zeta},k_1)$
 as $n\to\infty$.
Possibly, by taking subsequences, we can assume that $(\sigma^{g_n}x_{f,\bm{\xi}})_{n\geq 0}$ and $(\sigma^{h_n}x_{f,\bm{\xi}})_{n\geq 0}$ converge to some elements $x$ and $y$ in $X_{f,\bm{\xi}}$, respectively. 
By continuity of $\pi$, we conclude that $\pi(x)=\pi(y)=\alpha^{h^{-1}}\bm{\zeta}$.
Moreover, $x(1_G)=\lim_{n\to\infty}f(\alpha^{g_n}\bm{\xi})=k_0$ and $y(1_G)=\lim_{n\to\infty}f(\alpha^{h_n}\bm{\xi})=k_1$. 
Consequently, $|\pi^{-1}(\bm{\zeta})|>1$.
\end{proof}

\begin{lemma}\label{lemma: equal-cocycles}
Let $f\colon O_\alpha(\bm{\xi})\to K$ be a semicocycle  over $( G_{(G_n)},\bm{\xi}) $ that is invariant under no rotation,  and $\bm{\xi}'\in  G_{(G_n)}$ be such that $O_\alpha(\bm{\xi}')\cap D_f=\emptyset$. 
Then $f$ is a semicocycle over $( G_{(G_n)},\bm{\xi}')$ and $X_{f,\bm{\xi}}=X_{f,\bm{\xi}'}$.  
\end{lemma}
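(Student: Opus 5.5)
The plan is to show three things in turn: that $F$ determines a well-defined continuous function on $O_\alpha(\bm{\xi}')$, that this function has the same graph closure $F$, and that the two associated subshifts coincide, using Lemma~\ref{lemma:invariantnorotation} together with minimality.

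\emph{Step 1: $f$ makes sense and is continuous on $O_\alpha(\bm{\xi}')$.} Since $O_\alpha(\bm{\xi})$ is dense (the odometer is minimal) and $K$ is compact, the projection of $F$ onto $G_{(G_n)}$ is closed and contains a dense set, so every section $F(\bm{\zeta})$ is nonempty. By hypothesis, $|F(\bm{\zeta})|=1$ for $\bm{\zeta}\in O_\alpha(\bm{\xi}')$, so we may set $f'(\bm{\zeta})$ to be the unique element of $F(\bm{\zeta})$. This is consistent with $f$: at points of $O_\alpha(\bm{\xi})$ continuity of $f$ forces $F(\bm{\zeta})=\{f(\bm{\zeta})\}$, so $O_\alpha(\bm{\xi})\cap D_f=\emptyset$ as well. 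The key elementary fact is a closed-graph argument in the compact space $G_{(G_n)}\times K$:
\begin{quote}
if $|F(\bm{\zeta}_0)|=1$, then for every neighborhood $V$ of the point of $F(\bm{\zeta}_0)$ there is a neighborhood $U$ of $\bm{\zeta}_0$ with $F(\bm{\zeta})\subseteq V$ for all $\bm{\zeta}\in U$.
\end{quote}
To prove it, argue by contradiction: otherwise there are pairs $(\bm{\zeta}_n,k_n)\in F$ with $\bm{\zeta}_n\to\bm{\zeta}_0$ and $k_n\notin V$, and any accumulation point of $(k_n)$ gives a second element of $F(\bm{\zeta}_0)$. Applied at points of $O_\alpha(\bm{\xi}')$, this fact yields continuity of $f'$, so $f'$ is a semicocycle over $(G_{(G_n)},\bm{\xi}')$.

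\emph{Step 2: the closure $F'$ of the graph of $f'$ equals $F$.} The inclusion $F'\subseteq F$ is immediate, since the graph of $f'$ lies in $F$. For the converse, take $(\bm{\zeta},k)\in F$, say $(\alpha^{g_n}\bm{\xi},f(\alpha^{g_n}\bm{\xi}))\to(\bm{\zeta},k)$. Each $\alpha^{g_n}\bm{\xi}$ has a singleton section, so by the fact in Step 1 and density of $O_\alpha(\bm{\xi}')$ we can choose $\bm{\eta}_n\in O_\alpha(\bm{\xi}')$ within $2^{-n}$ of $\alpha^{g_n}\bm{\xi}$ with $f'(\bm{\eta}_n)$ within $2^{-n}$ of $f(\alpha^{g_n}\bm{\xi})$. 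A diagonal argument then gives $(\bm{\zeta},k)\in F'$. Consequently $f'$ has the same sections as $f$, so it is also invariant under no rotation and has $D_{f'}=D_f$. By Lemma~\ref{lemma:invariantnorotation}, both $X_{f,\bm{\xi}}$ and $X_{f',\bm{\xi}'}$ are minimal.

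\emph{Step 3: the subshifts coincide.} By minimality it suffices to show $x_{f',\bm{\xi}'}\in X_{f,\bm{\xi}}$. Choose $g_n\in G$ with $\alpha^{g_n}\bm{\xi}\to\bm{\xi}'$. For each $h\in G$ we have $\alpha^{h}\alpha^{g_n}\bm{\xi}\to\alpha^h\bm{\xi}'\in O_\alpha(\bm{\xi}')$, and the fact in Step 1 gives
\[
\sigma^{g_n}x_{f,\bm{\xi}}(h)=f(\tilde h\tilde g_n\bm{\xi})\longrightarrow f'(\tilde h\bm{\xi}')=x_{f',\bm{\xi}'}(h).
\]
Hence $\sigma^{g_n}x_{f,\bm{\xi}}\to x_{f',\bm{\xi}'}$ in the product topology, as required.

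I expect the only delicate point to be the closed-graph "upper semicontinuity" fact in Step 1. It is used three times: for continuity of $f'$, for the approximation in Step 2, and for the convergence in Step 3. Once it is established, the rest is routine.
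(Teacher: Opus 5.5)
Your proposal is correct and follows the paper's argument. The paper's proof is exactly your Step~3: approximate $\bm{\xi}'$ by $\alpha^{g_n}\bm{\xi}$, use that $f$ is continuous at each $\alpha^s\bm{\xi}'$, and conclude by minimality of $X_{f,\bm{\xi}}$. The paper dismisses as ``clear'' what your Steps~1--2 actually prove: $f$ extends continuously to $O_\alpha(\bm{\xi}')$ through the singleton sections of $F$, with the same graph closure, so invariance under no rotation and $D_f$ are unchanged.

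One cosmetic point: $K$ is only assumed compact Hausdorff, so the $2^{-n}$-approximation in $K$ in Step~2 should be phrased with neighborhoods. Alternatively, assume $K$ metrizable, which covers subshifts over a finite alphabet.
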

\begin{proof}
Let $x_{f,\bm{\xi}}\in X_{f,\bm{\xi}}$ and $x_{f,\bm{\xi}'}\in X_{f,\bm{\xi}'}$ be the elements associated with $f$ in $X_{f,\bm{\xi}}$ and $X_{f,\bm{\xi}'}$, respectively.
    It means, for each $s\in G$, $x_{f,\bm{\xi}}(s)=f(\tilde s\bm{\xi})$ and $x_{f,\bm{\xi}'}(s)=f(\tilde s\bm{\xi}')$.
    
    That $f$ is a semicocycle over $( G_{(G_n)},\bm{\xi}')$ is clear, since $O(\bm{\xi}')\cap D_f=\emptyset$ and to be invariant under rotation does not depend on the pointed system.
    On the other hand, as $ G_{(G_n)}$ is minimal, there exists a sequence $(g_n)_{n\geq 0}\subseteq G$ such that $\alpha^{g_n} \bm{\xi}\to \bm{\xi}'$. 
    Consequently, for every $s\in G$, we have that  $\alpha^s\alpha^{g_n} \bm{\xi}\to \alpha^s\bm{\xi}'$.
    As $f$ is continuous in $\alpha^s\bm{\xi}'$, we deduce that $f(\alpha^s\alpha^{g_n} \bm{\xi})\to f(\alpha^s\bm{\xi}')$, i.e., 
    $\sigma^{g_n}x_{f,\bm{\xi}}(s)\to x_{f,\bm{\xi}'}(s)$ and hence, $X_{f,\bm{\xi}}=X_{f,\bm{\xi}'}$ by minimality of $X_{f,\bm{\xi}}$.
\end{proof}

Let $\Sigma$ be a finite set.
We say that a subshift $X\subseteq \Sigma^G$ is a \textit{Toeplitz subshift} if it is minimal and there exist a scale $(G_n)_{n\geq 0}$, and an almost $1$-$1$ factor map $\pi: X\to  G_{(G_n)}$. 
From the previous discussion we obtain that, if $(G_n)_{n\geq 0}$ is a scale and $X_{f,\bm{\xi}}\subseteq \Sigma^G$ is a semicocyle extension of $G_{(G_n)}$, for some finite set $\Sigma$, then $X_{f,\bm{\xi}}$ is a Toepliz subshift.
For a more detailed presentation regarding Toeplitz subshifts see, for instance, \cite{CoPe08}.

\subsection{Automorphism and Normalizer groups for odometers}\label{sec:Normali} 

In this subsection, we recall and establish some facts about automorphisms and normalizers of odometers.

We first recall a characterization of the automorphisms of odometers.
\begin{proposition}\label{prop:right-aut}
Let $(G_n)_{n\geq 0}$ be a scale and $(G_{(G_n)},\alpha, G)$ be its odometer associated.
    It holds true that $$\AutDyn(G_{(G_n)},\alpha)=\{R_{\bm{\zeta}}:G_{(G_n)}\to G_{(G_n)}\mid\bm{\zeta}\in G_{(G_n)}\},$$ where $R_{\bm{\zeta}}$ is the map of right multiplication by $\bm{\zeta}$, i.e., $R_{\bm{\zeta}}(\bm{\eta})=\bm{\eta}\bm{\zeta}$, for $\bm{\eta}\in G_{(G_n)}$.
In particular, $\AutDyn(G_{(G_n)},\alpha)$ is isomorphic to $G_{(G_n)}$.
\end{proposition}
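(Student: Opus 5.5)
The plan is to prove the two inclusions directly, using that $G_{(G_n)}$ is a topological group containing the dense subgroup $\tilde G$, and that $\alpha^g$ is left multiplication by $\tilde g$, i.e.\ $\alpha^g(\bm{\eta})=\tilde g\bm{\eta}$.

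\textbf{Right multiplications are automorphisms.} Fix $\bm{\zeta}\in G_{(G_n)}$. Multiplication in $G_{(G_n)}$ is continuous, since it is inherited componentwise from $\prod_n G/G_n$. Hence $R_{\bm{\zeta}}$ is a homeomorphism with inverse $R_{\bm{\zeta}^{-1}}$. By associativity,
\[
R_{\bm{\zeta}}(\alpha^g\bm{\eta})=(\tilde g\bm{\eta})\bm{\zeta}=\tilde g(\bm{\eta}\bm{\zeta})=\alpha^g R_{\bm{\zeta}}(\bm{\eta}),
\]
so $R_{\bm{\zeta}}\in\AutDyn(G_{(G_n)},\alpha)$.

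\textbf{Every automorphism is a right multiplication.} Let $\varphi\in\AutDyn(G_{(G_n)},\alpha)$ and set $\bm{\zeta}=\varphi(1_{G_{(G_n)}})$. For $g\in G$,
\[
\varphi(\tilde g)=\varphi(\alpha^g 1)=\alpha^g\varphi(1)=\tilde g\bm{\zeta}=R_{\bm{\zeta}}(\tilde g).
\]
Thus $\varphi$ and $R_{\bm{\zeta}}$ are continuous maps that agree on $\tilde G$. Since $\tilde G$ is dense in $G_{(G_n)}$ (equivalently, the odometer is minimal and $\tilde G$ is the orbit of $1$), they agree everywhere.

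\textbf{The isomorphism.} Consider the map $\Phi\colon\bm{\zeta}\mapsto R_{\bm{\zeta}}$. It is surjective by the previous step. It is injective, because $\bm{\zeta}=R_{\bm{\zeta}}(1)$ recovers $\bm{\zeta}$. It is anti-multiplicative, since $R_{\bm{\zeta}}R_{\bm{\zeta}'}=R_{\bm{\zeta}'\bm{\zeta}}$. To get a genuine homomorphism, compose with inversion and use $\bm{\zeta}\mapsto R_{\bm{\zeta}^{-1}}$, which is an isomorphism from $G_{(G_n)}$ onto $\AutDyn(G_{(G_n)},\alpha)$. Alternatively, note that any group is isomorphic to its opposite via inversion.

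\textbf{Main point to check.} The only step needing care is the density and continuity argument in the second part. Both facts are immediate here: the topology is generated by the cylinders $\alpha^g C_n$, each of which meets $\tilde G$, and multiplication is continuous componentwise. So I expect no real obstacle beyond writing out the associativity identity $\alpha^g(\bm\eta)=\tilde g\bm\eta$.
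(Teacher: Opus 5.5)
Your proof is correct and follows essentially the same route as the paper: right multiplications commute with the left action, and any automorphism $\varphi$ agrees with $R_{\varphi(\tilde{1_G})}$ on the dense orbit $\tilde G$, hence everywhere. Your remark that $\bm{\zeta}\mapsto R_{\bm{\zeta}}$ is an anti-homomorphism, so that $\bm{\zeta}\mapsto R_{\bm{\zeta}^{-1}}$ gives the isomorphism, is a careful detail the paper leaves implicit.
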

\begin{proof}
Observe that $\{R_{\bm{\zeta}}:G_{(G_n)}\to G_{(G_n)}\mid\bm{\zeta}\in G_{(G_n)}\}\subseteq \AutDyn( G_{(G_n)},\alpha)$.
Conversely, let $\phi\in\AutDyn( G_{(G_n)},\alpha)$.
    Consider $\bm{\xi}=\phi(\tilde{1_G})$.
    We have that for every $g\in G$, $\phi(\tilde{g})=\phi(\alpha^g(\tilde{1_G}))=\alpha^{g}\bm{\xi}=\tilde{g}\bm{\xi}=R_{\bm{\xi}}(\tilde{g})$.
    By minimality of $ G_{(G_n)}$, we conclude.
\end{proof}

The next proposition gives a necessary and sufficient condition for the transformations of odometers. It  will be used throughout the paper. We include its proof, since it offers useful insight for later parts of the paper.

\begin{proposition}[{\cite[Lemma 2.6]{CaPe23}}]\label{lemma: if-only-Norm}
Let $( G_{(G_n)},\alpha, G)$ be an odometer. Then it holds true that
\[\GLN(G,\alpha) = \{ \psi \in \Aut(G) \mid \forall n>0, \exists m(n)>n  \textrm{ s.t. } \psi^{-1}(G_m), \psi(G_m)\leqslant G_n\}.
\]
\end{proposition}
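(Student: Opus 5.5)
We prove the two inclusions separately. The condition on $\psi$ says exactly that $\psi$ and $\psi^{-1}$ are uniformly continuous for the profinite topology on $G$ induced by the scale. The plan is to pass between continuous extensions to $G_{(G_n)}$ and this subgroup condition.

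\emph{Sufficiency.} Let $\psi\in\Aut(G)$ satisfy the condition. For each $n$ fix $m(n)>n$ with $\psi(G_{m(n)})\leqslant G_n$, and define
\[
\phi\big((\xi_kG_k)_k\big)=\big(\psi(\xi_{m(n)})G_n\big)_n .
\]
This is well defined, since changing $\xi_{m(n)}$ within its $G_{m(n)}$-coset changes $\psi(\xi_{m(n)})$ only within $G_n$. The compatibility relation between consecutive coordinates follows from $\xi_{m(n+1)}G_{m(n)}=\xi_{m(n)}G_{m(n)}$; here we may assume $m$ is increasing. The $n$-th coordinate of $\phi(\bm\eta)$ depends only on the $m(n)$-th coordinate of $\bm\eta$, so $\phi$ is continuous. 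It is also a group homomorphism extending $\tilde g\mapsto\widetilde{\psi(g)}$.

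The same construction applied to $\psi^{-1}$ gives a continuous map $\phi'$. The compositions $\phi\circ\phi'$ and $\phi'\circ\phi$ are the identity on the dense set $\tilde G$, hence everywhere, so $\phi$ is a homeomorphism. Finally,
\[
\phi(\alpha^g\bm\eta)=\phi(\tilde g\bm\eta)=\widetilde{\psi(g)}\,\phi(\bm\eta)=\alpha^{\psi(g)}\phi(\bm\eta),
\]
so $\phi\in\No(G_{(G_n)},\alpha,\psi)$ and $\psi\in\GLN(G,\alpha)$.

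\emph{Necessity.} Let $\phi\in\No(G_{(G_n)},\alpha,\psi)$. Composing with the automorphism $R_{\phi(\tilde 1)^{-1}}$ (Proposition~\ref{prop:right-aut}) does not change $j(\phi)$, since automorphisms lie in $\ker j$. So we may assume $\phi(\tilde 1)=\tilde 1$, and then $\phi(\tilde g)=\widetilde{\psi(g)}$ for all $g\in G$.

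Fix $n$. The set $C_n$ is an open neighbourhood of $\tilde1$, and $\phi$ is continuous at $\tilde 1$. Since the cylinders $C_m$ form a neighbourhood basis of $\tilde1$, there is $m>n$ with $\phi(C_m)\subseteq C_n$. For $g\in G_m$ we have $\tilde g\in C_m$, so $\widetilde{\psi(g)}\in C_n$, i.e.\ $\psi(g)\in G_n$. Thus $\psi(G_m)\leqslant G_n$.

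For $\psi^{-1}$ apply the same argument to $\phi^{-1}\in\No(G_{(G_n)},\alpha,\psi^{-1})$, which also fixes $\tilde1$. Taking the larger of the two indices gives a common $m(n)$.

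\emph{Main obstacle.} The only delicate step is sufficiency: one must check that the coordinatewise formula is well defined and actually lands in $G_{(G_n)}$. This is exactly where normality of the $G_n$ and the monotonicity of $m$ are used. Everything else is density and continuity bookkeeping.
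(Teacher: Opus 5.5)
Your proof is correct and follows essentially the paper's route: continuity of the normalizer element on cylinders for necessity, and the coordinatewise map $\bm\xi\mapsto(\psi(\xi_{m(n)})G_n)_n$ for sufficiency. You deviate in two minor ways. First, you normalize $\phi(\tilde 1)=\tilde 1$ via $R_{\phi(\tilde 1)^{-1}}$, whereas the paper places a translated cylinder $\alpha^dC_m$ inside $\phi^{-1}(C_n)$ and uses normality of $G_m$. Second, you obtain invertibility from density of $\tilde G$ (both $\phi\circ\phi'$ and $\phi'\circ\phi$ are the identity on $\tilde G$) rather than from coalescence of equicontinuous systems, which is more elementary and self-contained.
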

\begin{proof}
 Let $\psi \in \GLN(G, \alpha)$ and let $\phi: G_{(G_n)}\to  G_{(G_n)}$ be a homeomorphism that satisfies $\phi(\alpha^h \bm{\xi})=\alpha^{\psi(h)}\phi(\bm{\xi})$ for each $h\in G$ and $\bm{\xi}\in  G_{(G_n)}$.
 Let $n\in\mathbb{N}$ and let $C_n$ denote the cylinder of level $n$ centered in $\tilde{1_G}$ as in \eqref{eq: cylinders}. 
Since $\phi^{-1}(C_n)$ is a clopen set of $ G_{(G_n)}$, there exists $m\in \mathbb{N}$ such that $\alpha^dC_m\subseteq \phi^{-1}(C_n)$, for some $d\in G$.
 Let $\gamma\in G_m$.
 For any $\bm{\xi}\in \alpha^dC_m$ it holds that $\phi(\alpha^\gamma \bm{\xi})=\alpha^{\psi(\gamma)}\phi(\bm{\xi})\in C_n$.
 Using that $\phi(\bm{\xi})\in C_n$, we obtain that $\psi(\gamma)\in G_n$, which implies that $\psi(G_m)\leqslant G_n$. The same statement for $\psi^{-1}$ is true by considering $\phi^{-1}$ and $G_m$.
 Since the groups $(G_n)_n$ are  nested, we get a common index $m$ for both $\psi$ and $\psi^{-1}$.
 
 Conversely, assume that for every $n\in \mathbb{N}$, there exists $m(n)\in\mathbb{N}$ such that $\psi(G_{m(n)})$, $\psi^{-1}(G_{m(n)})\leqslant G_n$.
 This gives the existence of the map $\pi_n: G_{(G_n)}\to G/G_n$ given by $\pi_n(\bm{\xi})=\psi(\bm{\xi}_{m(n)})G_n$, $\bm{\xi} = (\bm{\xi}_nG_n)_{n\geq 0}\in G_{(G_n)}$.
 The universal property of $ G_{(G_n)}$ as inverse limit guarantees the existence of a continuous map $\phi: G_{(G_n)}\to  G_{(G_n)}$ such that $\phi \circ \alpha^g= \alpha^{\psi(g)} \circ \phi$. Similarly, we get a continuous map $\phi' \colon  G_{(G_n)}\to  G_{(G_n)}$ such that $ \phi' \circ \alpha^g= \alpha^{\psi^{-1}(g)} \circ \phi'$. Hence the map $\phi \circ \phi'$ is continuous and commutes with the action $\alpha$. Since equicontinous systems are coalescent \cite[Chapter 5]{Au88}, $\phi \circ \phi'$ is invertible and the map $\phi$ is a homeomorphism. 
\end{proof}
As a consequence, the odometers associated with a sequence and with any of its subsequences are conjugate.

\begin{exem}[The universal odometer]
 Let $G$ be a countable, residually finite group such that the family of finite-index normal subgroups of $G$ is countable.
Let $(G_n)_{n\geq 0}$ be an enumeration of this family (not necessarily nested).
Define $\Gamma_0:=G_0$, and for every $n\geq 1$, set $G_n:=\Gamma_{n-1}\cap G_n$.
It is straightforward to check that every odometer over $G$ is a factor of $(G_{(G_n)},\alpha, G)$.
This odometer $ G_{(G_n)}$ is of particular interest, since $\GLN( G,\alpha)=\Aut(G)$.
\end{exem}

In fact, the normalizer group of an odometer can be described as a semidirect product of its transformation group with its automorphism group.
\begin{proposition}\label{coro:splitsequence}
The short exact sequence given in \eqref{eq: shortexactsequence} for $( G_{(G_n)},\alpha, G)$,
\begin{align}\label{eq:splitsequence}
    1\to \AutDyn( G_{(G_n)},\alpha)\to \No( G_{(G_n)},\alpha)\to \GLN( G,\alpha)\to 1,
\end{align}
splits.
\end{proposition}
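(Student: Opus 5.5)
To prove that the sequence \eqref{eq:splitsequence} splits, I will construct an explicit section $s\colon \GLN(G,\alpha)\to \No(G_{(G_n)},\alpha)$ that is a group homomorphism with $j\circ s=\mathrm{Id}$. The natural choice is to send each $\psi\in\GLN(G,\alpha)$ to the unique continuous extension of $\psi$ to the profinite completion. Concretely, Proposition~\ref{lemma: if-only-Norm} gives, for every $n$, an index $m(n)$ with $\psi(G_{m(n)})\leqslant G_n$. Using this, I define $\phi_\psi\colon G_{(G_n)}\to G_{(G_n)}$ by
\[
\phi_\psi\bigl((\xi_kG_k)_{k\geq 0}\bigr)=\bigl(\psi(\xi_{m(n)})G_n\bigr)_{n\geq 0}.
\]
This is the map produced in the proof of Proposition~\ref{lemma: if-only-Norm}. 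I then check three things. First, $\phi_\psi$ is well defined: the value $\psi(\xi_{m(n)})G_n$ does not depend on the representative, since $\psi(G_{m(n)})\leqslant G_n$, and the resulting sequence is compatible because the subgroups are nested. Second, $\phi_\psi$ is continuous. Third, $\phi_\psi(\tilde g)=\widetilde{\psi(g)}$ for every $g\in G$.

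The key observation is that $\phi_\psi$ is a continuous group homomorphism of the group $G_{(G_n)}$, not merely a $\psi$-equivariant map. Indeed, the map $\tilde g\mapsto\widetilde{\psi(g)}$ is a homomorphism on the dense subgroup $\tilde G$, and multiplication in $G_{(G_n)}$ is continuous, so $\phi_\psi$ is a homomorphism everywhere. The equivariance then follows at once:
\[
\phi_\psi(\alpha^g\bm\eta)=\phi_\psi(\tilde g\bm\eta)=\widetilde{\psi(g)}\,\phi_\psi(\bm\eta)=\alpha^{\psi(g)}\phi_\psi(\bm\eta).
\]

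Next I show that $\psi\mapsto\phi_\psi$ is multiplicative. For $\psi_1,\psi_2\in\GLN(G,\alpha)$, both $\phi_{\psi_1}\circ\phi_{\psi_2}$ and $\phi_{\psi_1\psi_2}$ are continuous and agree on the dense set $\tilde G$, since both send $\tilde g$ to $\widetilde{\psi_1\psi_2(g)}$. Hence they are equal. In the same way, $\phi_{\mathrm{Id}}=\mathrm{Id}$. It follows that $\phi_{\psi^{-1}}$ is a continuous inverse of $\phi_\psi$, so $\phi_\psi$ is a homeomorphism and therefore lies in $\No(G_{(G_n)},\alpha,\psi)$. This argument avoids the coalescence step used in the proof of Proposition~\ref{lemma: if-only-Norm}. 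Altogether, $s(\psi)=\phi_\psi$ is a homomorphic section of $j$, so the sequence splits, and
\[
\No(G_{(G_n)},\alpha)\cong G_{(G_n)}\rtimes\GLN(G,\alpha).
\]
Here $G_{(G_n)}$ is identified with $\AutDyn(G_{(G_n)},\alpha)$ through the right multiplications of Proposition~\ref{prop:right-aut}. One can also record the explicit action: $\phi_\psi R_{\bm\zeta}\phi_\psi^{-1}=R_{\phi_\psi(\bm\zeta)}$, which holds because $\phi_\psi$ is a homomorphism.

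The main obstacle is the careful verification that $\phi_\psi$ is a well-defined continuous homomorphism independent of the choice of $m(n)$. The choice of $m(n)$ is not unique, but any two valid choices give the same value, because the nestedness of the $G_k$ and the compatibility of $(\xi_kG_k)$ make $\psi(\xi_{m})G_n$ constant for all $m\geq m(n)$. Continuity holds because the $n$-th coordinate of $\phi_\psi(\bm\eta)$ depends only on the $m(n)$-th coordinate of $\bm\eta$. Once these points are settled, the remaining identities reduce to density arguments on $\tilde G$.
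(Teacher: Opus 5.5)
Your proof is correct and is essentially the paper's argument. In both, the section sends $\psi$ to the unique $\psi$-automorphism mapping $\tilde g$ to $\widetilde{\psi(g)}$, and multiplicativity follows from agreement on the dense subgroup $\tilde G$. The only difference is how this map $B_\psi$ is obtained: the paper corrects an arbitrary element of $\No(G_{(G_n)},\alpha,\psi)$ by a right translation, while you build it directly from the inverse-limit formula (the description in Remark~\ref{rem:desc-B_psi}) and get invertibility from $\phi_{\psi^{-1}}$ rather than from coalescence.
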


\begin{proof}
For $\psi\in \GLN( G,\alpha)$,  let $\phi\in \No( G_{(G_n)},\alpha,\psi)$.
  Consider the map $B_\psi:=\phi\circ R_{\phi(\tilde{1_G})}\in \No( G_{(G_n)},\alpha,\psi)$, where $R_{\phi(\tilde{1_G})}$ is the right multiplication automorphism defined in Proposition \ref{prop:right-aut}.
 We claim that $$B=\{B_\psi\in \No( G_{(G_n)}, \alpha): \psi\in \GLN( G,\alpha)\}$$ is a subgroup of $\No( G_{(G_n)},\alpha)$.
  Actually, first of all, note that $B_{\psi}$ is the unique map in $\No( G_{(G_n)},\alpha,\psi)$ satisfying $B_{\psi}(\tilde{g})=\widetilde{\psi(g)}$ for every $g\in G$.
  Therefore, for any $\psi_1,\psi_2\in \GLN( G,\alpha)$, we have $B_{\psi_1}\circ B_{\psi_2}(\tilde{g})=\widetilde{\psi_1(\psi_2(g))}$ for each $g\in G$.
  Consequently, $B_{\psi_1}\circ B_{\psi_2}=B_{\psi_1\circ \psi_2}$.
  Furthermore, we note that $(B_\psi)^{-1}=B_{\psi^{-1}}\in B$.

  From the previous computation, the map $\lambda:\GLN( G,\alpha)\to B$ defined by $\lambda(\psi)=B_{\psi}$ is a group homomorphism in such a way that splits the short exact sequence \eqref{eq:splitsequence}.
\end{proof}
\begin{remark}\label{rem:desc-B_psi}
    {\rm The explicit form of the maps $B_\psi$ will be important later in Section \ref{Sec:Applications}.
    For that reason, we give a more precise description of these maps.
    Let $\psi\in \GLN( G,\alpha)$, $B_\psi$ be as in the previous proof, and  $\bm{\xi}=({\xi}_nG_n)_{n\geq 0}\in  G_{(G_n)}$.
    Note that $\tilde{{\xi}_n}\to \bm{\xi}$ as $n\to\infty$.
    Thus, $B_\psi(\tilde{{\xi}_n})\to B_\psi(\bm{\xi})$ as $n\to\infty$.
    Let us denote $B_\psi(\bm{\xi})=(\eta_nG_n)_{n\geq 0}$.
    The last convergence says that for every $n\in \mathbb{N}$, there exists $m_n\in \mathbb{N}$ such that $\psi({\xi}_{m_n})G_n=\eta_nG_n$.
        Then, $B_\psi(\bm{\xi})=(\psi({\xi}_{m_n})G_n)_{n\geq 0}$ for some subsequence $({\xi}_{m_n})_{n\geq 0}$ of $(\xi_n)_{n\geq 0}$.
    
        When $\psi$ satisfies $\psi(G_n)=G_n$ for every $n\geq 0$, we can take the subsequence $(\xi_{m_n})_{n\geq 0}$ as the exact same sequence $(\xi_{n})_{n\geq 0}$. 
    }
\end{remark}

For abelian odometers, one can give a more precise description of the normalizer. We follow the presentation in \cite{GPS2019}, where normalizers are called isomorphisms.
Let $d \ge 1$ be an integer and let $(H_i)_{i\ge 1}$ be a scale of $\Z^d$.
Set, for each index $i$, the dual lattice
\[ H_i^* = \{ {n} \in \R^d: \langle {n}, {k} \rangle \in \Z, \textrm{ for all } {k} \in H_i\}
\]
and
\begin{align}\label{eq:SuperGroup}
    H^*_\infty = \bigcup_{i=1}^\infty H_i^*.
\end{align} 

It is straightforward to check that $H^*_\infty$ is an abelian group such that $\Z^d \leqslant H^*_\infty \leqslant \mathbb Q^d$. Actually, this group is an important invariant for normalizers, as shown by the  following proposition.

\begin{proposition}[{\cite[Proposition 2.8]{GPS2019}}]\label{prop:charNormalizerAbelianOdo} 
Let  $(H_n)_{n\geq 0}$ be a scale and $(\Z^d_{(H_n)},\alpha, \Z^d)$ its odometer associated.
Then we have \[ \GLN({\Z^d}, \alpha) = \{\psi \in {\rm GL}_d(\Z): \psi(H^*_\infty) = H^*_\infty\}. \]
\end{proposition}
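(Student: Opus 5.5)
The plan is to reduce the statement to Proposition~\ref{lemma: if-only-Norm}. That result says $\psi\in\GLN(\Z^d,\alpha)$ if and only if for every $n$ there is $m>n$ with $\psi(H_m)\leqslant H_n$ and $\psi^{-1}(H_m)\leqslant H_n$. Since every automorphism of $\Z^d$ is given by a matrix in ${\rm GL}_d(\Z)$, it suffices to show that this combinatorial condition on $\psi$ is equivalent to $\psi(H^*_\infty)=H^*_\infty$. Here $\psi$ acts on $\QQ^d$ by the same matrix. The bridge is lattice duality. Write $\psi^{T}$ for the transpose and $\psi^{-T}$ for the inverse transpose. For a lattice $L\leqslant\Z^d$ of full rank we have $(\psi(L))^*=\psi^{-T}(L^*)$, since $\langle \psi^{-T}n,\psi k\rangle=\langle n,k\rangle$. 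Duality is inclusion-reversing and involutive on full-rank lattices: $L\leqslant L'$ if and only if $L'^*\leqslant L^*$.

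\textbf{Forward direction.} Assume the condition of Proposition~\ref{lemma: if-only-Norm}. From $\psi(H_m)\leqslant H_n$ we get $H_n^*\leqslant \psi^{-T}(H_m^*)\subseteq \psi^{-T}(H^*_\infty)$. Taking the union over $n$ gives $H^*_\infty\subseteq\psi^{-T}(H^*_\infty)$. Applying the same argument to $\psi^{-1}$ gives the reverse inclusion, so $\psi^{-T}$ preserves $H^*_\infty$. Since $\psi\mapsto\psi^{-T}$ is a bijection of ${\rm GL}_d(\Z)$ and the condition is symmetric under $\psi\leftrightarrow\psi^{-1}$, this shows that $\GLN(\Z^d,\alpha)$ equals its image under transpose-inverse.

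\textbf{Converse.} Assume $\psi^{-T}(H^*_\infty)=H^*_\infty$. The key point is that $H_n^*$ is finitely generated: it contains $\Z^d$ with finite index, since $H_n$ has finite index in $\Z^d$. So $\psi^{T}(H_n^*)$ is a finitely generated subgroup of the increasing union $H^*_\infty=\bigcup_m H_m^*$, and hence lies in a single $H_m^*$. Dualizing $\psi^{T}(H_n^*)\leqslant H_m^*$ gives $H_m\leqslant \psi^{-1}(H_n)$, that is, $\psi(H_m)\leqslant H_n$. The same argument applied to $\psi^{-T}$ gives $\psi^{-1}(H_{m'})\leqslant H_n$, and we take the larger of the two indices.

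\textbf{Reconciling with the statement.} These two directions show $\GLN=\{\psi:\psi^{-T}(H^*_\infty)=H^*_\infty\}$. To match the statement as written, with $\psi$ rather than $\psi^{-T}$ acting on $H^*_\infty$, I would use the convention of \cite{GPS2019}, where $\psi$ acts on the dual side through the transpose. Alternatively, if $\psi$ is meant literally, I would try to show that the set $\{\psi:\psi(H^*_\infty)=H^*_\infty\}$ is closed under transpose. This step, pinning down the convention or proving the transpose symmetry, is the main obstacle. The natural approach is to identify $H^*_\infty/\Z^d$ with the Pontryagin dual of the odometer, as the group of eigenvalues. A normalizer element then acts on the eigenvalues via the adjoint, which is exactly $\psi^{-T}$, so the statement should be read with that action.
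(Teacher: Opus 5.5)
Your two directions are correct. The paper gives no proof here (it only cites \cite[Proposition 2.8]{GPS2019}), so your argument stands as a clean self-contained proof. Its ingredients are Proposition~\ref{lemma: if-only-Norm}, the identity $(\psi L)^*=\psi^{-T}(L^*)$, the fact that duality reverses inclusions, and the observation that the finitely generated group $\psi^{\pm T}(H_n^*)$ must lie in a single $H_m^*$ of the increasing union. Together these prove
\[\GLN(\Z^d,\alpha)=\{\psi\in{\rm GL}_d(\Z):\psi^{-T}(H^*_\infty)=H^*_\infty\}.\]
Your closing remark is the right way to read the statement: a $\psi$-automorphism acts on the eigenvalue group $H^*_\infty/\Z^d$ through the contragredient $\psi^{-T}$.

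What you must drop is everything that relies on transpose symmetry.

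The last sentence of your forward direction claims that $\GLN(\Z^d,\alpha)$ equals its image under $\psi\mapsto\psi^{-T}$. This does not follow from the inclusion you proved, and it is false.

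Your fallback, showing that $\{\psi:\psi(H^*_\infty)=H^*_\infty\}$ is closed under transposition, cannot work either. With $\psi$ acting on $H^*_\infty$ by the same matrix, the statement is false. Here is a counterexample:
\begin{enumerate}
\item Take $d=2$ and $H_n=3^n(\Z\times 2\Z)$. This is the paper's own ${\rm Cong}(m)$ scale with $m=2$, $p=3$. Then $H^*_\infty=\Z[1/3]\times\tfrac12\Z[1/3]$.
\item The matrix $\psi=\begin{pmatrix}1&1\\0&1\end{pmatrix}$ preserves every $H_n$, so $\psi\in\GLN(\Z^2,\alpha)$. But $\psi(0,\tfrac12)=(\tfrac12,\tfrac12)\notin H^*_\infty$.
\item The transpose $\psi^{T}$ preserves $H^*_\infty$. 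But $\psi^{T}(3^m,0)=(3^m,3^m)\notin H_0$ for every $m$, so $\psi^T\notin\GLN(\Z^2,\alpha)$.
\end{enumerate}
Thus $\GLN(\Z^2,\alpha)={\rm Cong}(2)$, while the literal stabilizer of $H^*_\infty$ is its transpose, and neither group is transpose-invariant.

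The statement is true only with the contragredient action (equivalently with $\psi^{T}$, since the stabilizer is a group). In other words, it holds up to the automorphism $\psi\mapsto\psi^{-T}$ of ${\rm GL}_d(\Z)$. State it that way rather than trying to reconcile it with the literal reading.
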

When the scale $(H_n)$ is defined by a non-singular integer matrix $A$ via $H_n= A^n \Z^d$, we refer to Sabitova \cite{Sabitova24} for an arithmetic description of $\GLN({\Z^d}, \alpha)$, and to \cite{Sabitova25} for an algorithmic one.

\section{Automorphism and Normalizer groups for COE odometers}\label{Sec:AutandNorCOE}

In this section, we study the flexibility/rigidity of the normalizer group within a  continuous orbit equivalence class of odometers. Actually, there are rigidity properties for action of abelian group, but as soon as the commutativity assumption is dropped (e.g. for nilpotent group) the commensurability of normalizer groups for COE systems is no longer true.

Recall that two free actions $(X, \alpha, G)$ and $(Y, \beta, H)$ of two countable groups $G$ and $H$ are \textit{continuously orbit equivalent} (COE) if there is a homeomorphism $h \colon X \to Y$ and continuous functions $c_\alpha \colon X \times G \to H$ and $c_\beta \colon X \times H \to G$ such that 
\[ h(\alpha^{g} x) =  \beta^{c_\alpha (x,g) }(h(x)),\mbox{ and }\beta^{\ell} (h(x)) =  h \left(\alpha^{c_\beta (x,\ell)}(x)\right)
\]
for all $x \in X$, $g\in G$ and $\ell \in H$. In particular the function $h$ maps the $\alpha$-orbits onto $\beta$-orbits: $h(O_\alpha (x)) = O_\beta (h(x))$.

We observe that a normalizer $\phi$ of an action $(X, \alpha, G)$ defines a self continuous orbit equivalence of $(X, \alpha, G)$ with (constant) function $c_\alpha (x, g) = \psi_\phi (g)$ for all $g \in G $, $x\in X$.

The COE relation has some rigidity properties according to the acting group. M. Boyle showed  that two minimal $\Z$-actions on the Cantor set are flip conjugate, meaning they are conjugate or one map is conjugate to the inverse map of the other one.  For other groups such kind of rigidity is under investigation. Few things are known, but for free actions COE can  only be realized between actions of  quasi-iometric groups \cite{Li}. Note there is still no characterization of the COE class.

Also the type of actions restricts the COE class. M.I. Cortez and K. Medynets showed that COE odometers have to be \textit{commensurable}, i.e., they have finite index subgroups that are isomorphic. 
\begin{theorem}[{\cite[Theorem 1.2]{CoMe16}}]\label{Theorem: coe-iff-structure}
    Let $(X,\alpha,G)$ and $(Y,\beta,H)$ be two odometers.
    Then $(X,\alpha,G)$ and $(Y,\beta,H)$ are continuous orbit equivalent if and only if, there exist scales $(G_n)_{n\geq 0}$ and $(H_n)_{n\geq 0}$ determining the structure of $(X,\alpha,G)$ and $(Y,\beta,H)$ as odometers, respectively, and a group isomorphism $\theta : H_0 \to G_0$ such that $[G : G_0] = [H : H_0]$ and $\theta (H_n) = G_n$ for every $n \geq  0$.
\end{theorem}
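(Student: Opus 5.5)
We prove the two implications separately. The ``if'' direction is an explicit construction. The ``only if'' direction extracts an isomorphism of finite-index subgroups from the cocycle.

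\emph{Sufficiency.} Assume scales $(G_n)$, $(H_n)$ and an isomorphism $\theta\colon H_0\to G_0$ are given, with $[G:G_0]=[H:H_0]=N$ and $\theta(H_n)=G_n$.
\begin{enumerate}
\item Let $C_0^G\subseteq G_{(G_n)}$ be the level-$0$ cylinder. It is a clopen subgroup, and it is naturally the odometer of the $G_0$-action with scale $(G_n)_{n\ge 0}$; this makes sense because each $G_n$ is normal in $G$, hence in $G_0$. The same holds for $C_0^H$.
\item Since $\theta(H_n)=G_n$, the map $\theta$ induces group isomorphisms $H_0/H_n\to G_0/G_n$ compatible with the inverse systems. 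Passing to the limit gives a homeomorphism $\Theta\colon C_0^H\to C_0^G$ with $\Theta\circ\beta^{k}=\alpha^{\theta(k)}\circ\Theta$ for $k\in H_0$.
\item Fix coset representatives $g_1,\dots,g_N$ of $G/G_0$ and $h_1,\dots,h_N$ of $H/H_0$. Then $G_{(G_n)}=\bigsqcup_i\alpha^{g_i}C_0^G$, and similarly for $H$.
\item Define $h$ on $\alpha^{g_i}C_0^G$ by $h=\beta^{h_i}\circ\Theta^{-1}\circ\alpha^{g_i^{-1}}$. This is a homeomorphism.
\item Let $x\in\alpha^{g_i}C_0^G$ and $g\in G$. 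Write $gg_i=g_jk$ with $k\in G_0$. A direct computation gives $h(\alpha^gx)=\beta^{h_j\theta^{-1}(k)h_i^{-1}}h(x)$.
\item So $c_\alpha(x,g)$ depends only on the index $i$ of the cylinder containing $x$, hence is locally constant. The symmetric formula with $\theta^{-1}$ gives $c_\beta$.
\end{enumerate}

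\emph{Necessity.} Let $h$ be a COE with cocycles $c_\alpha$, $c_\beta$.
\begin{enumerate}
\item By continuity and compactness, $c_\alpha(\cdot,g)$ is locally constant for each $g$.
\item By the cocycle identity $c_\alpha(x,gg')=c_\alpha(\alpha^{g'}x,g)\,c_\alpha(x,g')$, restricting to a clopen set $A$ and to the return group of $A$ produces a map into $H$.
\item Aim for a level $m$ and a cylinder $A=\alpha^dC_m^G$ such that $h(A)$ is a cylinder translate $\beta^eC_{m'}^H$. Such $m,m'$ exist because both cylinder families form bases of clopens. Then, by freeness, the return groups $G_m$ of $A$ and $H_{m'}$ of $h(A)$ correspond under orbit-matching.
\item The goal is to show that $\gamma\mapsto c_\alpha(x,\gamma)$ restricted to $G_m$ is independent of $x\in A$ for a suitable choice of $A$. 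It is then a homomorphism $G_m\to H_{m'}$, bijective by the symmetric argument with $c_\beta$. Its inverse is $\theta$.
\item Refining the cylinders, $\theta$ carries the deeper levels of one scale onto the other. Re-indexing both scales from $m$, $m'$ (subsequences give conjugate odometers, by the remark after Proposition~\ref{lemma: if-only-Norm}) gives $\theta(H_n)=G_n$.
\item The index equality comes from comparing measures. The unique invariant probabilities on the two odometers give $A$ and $h(A)$ masses $1/[G:G_m]$ and $1/[H:H_{m'}]$. A COE with continuous cocycles preserves invariant measures, since it is a countable clopen piecewise-translation. If needed, pass to a further common level to match indices.
\end{enumerate}

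The main obstacle is making the cocycle constant on the whole infinite group $G_m$ simultaneously, rather than only on finitely many elements; $G$ need not be finitely generated.

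\begin{itemize}
\item First I would exploit that $c_\alpha(x,\gamma)$ for $\gamma\in G_m$ and $x\in A$ records the $H$-element carrying $h(x)$ to $h(\alpha^\gamma x)$. Both points lie in $h(A)=\beta^eC^H_{m'}$.
\item Conjugating by $\beta^e$, this element lies in $H_{m'}$.
\item In an odometer, the element of $H_{m'}$ relating two points of a cylinder is their ``difference'' $h(x)^{-1}h(\alpha^\gamma x)$ computed in the profinite group. This suggests that $h|_A$, composed with translations, is an affine map between profinite groups.
\item I would show that such a map is a homomorphism on the dense subgroup $\tilde G_m$ via the cocycle identity, and then conclude independence of $x$ by continuity and density of the orbit of $\tilde{1_G}$ in $C_m$.
\end{itemize}
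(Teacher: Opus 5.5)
The paper gives no proof of this statement; it is quoted from \cite{CoMe16}. So I assess your argument on its own terms. Your sufficiency direction is correct. The necessity direction has a genuine gap exactly at the obstacle you identify, and your proposed way around it is circular. After translating so that $\tilde{1_G}\in A=C_m$, consider the cocycle identity $c_\alpha(x,\gamma\gamma')=c_\alpha(\alpha^{\gamma'}x,\gamma)\,c_\alpha(x,\gamma')$. By this identity, multiplicativity of $\gamma\mapsto c_\alpha(\tilde{1_G},\gamma)$ on $G_m$ is \emph{equivalent} to $c_\alpha(\tilde{\gamma'},\gamma)=c_\alpha(\tilde{1_G},\gamma)$ for all $\gamma,\gamma'\in G_m$. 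That is precisely the independence of $x$ you want to deduce from it. Uniform continuity of $h$ and bi-invariance of $\rho$ only give that $c_\alpha(x,\gamma)$ and $c_\alpha(x',\gamma)$ are close in the profinite topology of $H$ when $x,x'$ are close, not that they are equal.

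In fact, without finite generation your intermediate claim is false, and so is the equivalence as quoted. Take $G=\bigoplus_{i\ge0}\Z/2\Z$ and $H=\bigoplus_{i\ge0}\Z/4\Z$, with $G_n,H_n$ the sequences vanishing on $\{0,\dots,n-1\}$. Then $X=(\Z/2\Z)^{\N}$ and $Y=(\Z/4\Z)^{\N}$ with translation actions. Define $h(x)_i=x_{2i}+2x_{2i+1}$, reading $x_j\in\{0,1\}$. It maps orbits (sequences differing in finitely many places) onto orbits. Moreover $c_\alpha(x,g)=h(x+g)-h(x)$ depends on finitely many coordinates of $x$, and likewise for $c_\beta$, so $X$ and $Y$ are COE. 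But $G$ has exponent $2$, while every finite-index subgroup of $H$ contains some $e_i-e_j$ of order $4$. Hence no isomorphism $G_m\to H_{m'}$ exists.

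The missing hypothesis is that $G$ is finitely generated; all concrete applications in the paper satisfy it. With it, the missing idea is short. Let $S$ be a finite symmetric generating set and pick $k$ such that each $c_\alpha(\cdot,s)$, $s\in S$, is constant on level-$k$ cylinders. Since $G$ permutes the level-$k$ cylinders, induction on word length with the cocycle identity gives that $c_\alpha(\cdot,g)$ is constant on level-$k$ cylinders for \emph{all} $g\in G$ at once. On the $G_k$-invariant set $A=C_k$, the map $\gamma\mapsto c_\alpha(x,\gamma)$ is then independent of $x$, a homomorphism, and injective by freeness.

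Even granting this, Steps 3 and 5 do not follow as written. The fact that both cylinder families are bases does not produce a cylinder $A$ with $h(A)$ exactly a cylinder translate. What the constancy gives is only that $h(A)$ is a coset of the open subgroup $\overline{\widetilde{\theta_A(G_k)}}$. The subgroups $\theta_A(G_n)$ need be neither members of $(H_n)$ nor normal in $H$.

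For example, let $G=\Z^2\rtimes\Z/2\Z$ with the swap action and $G_n=2^n\Z^2$. The isomorphism $(2a,2b)\mapsto(a,4b)$ from $2\Z^2$ onto $\Z\times4\Z$ (both of index $8$ in $G$) extends to their closures in $G_{(G_n)}$. Gluing it with translations, as in your sufficiency construction, gives a self-COE. Its cocycle restricted to $G_m$ is constant on every cylinder of level $m\ge1$, with image $2^{m-1}\Z\times2^{m+1}\Z$ or its swap. These images are neither some $G_n$ nor normal in $G$, and no normal chain is carried to a normal chain: that would need a lattice invariant under the diagonal matrix with entries $4$ and $1/4$. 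So "re-indexing" cannot produce the normal scales required in the conclusion. You need a separate argument that builds normal chains matched by some isomorphism (here the identity works, but it is not the one extracted from $h$), or you must allow non-normal chains as in \cite{CoPe08}.
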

Note that this theorem can be reinforced when one of the acting group is nilpotent. S. Hurder and O. Lukina show COE nilpotent odometers (not necessarily with a free action)  only occurs between commensurable nilpotent groups \cite{HurderLukina}.

As a consequence of Theorem \ref{Theorem: coe-iff-structure}, any two COE odometers have commensurable automorphism groups.
\begin{corollary}\label{Coro: virtually-iso-Aut}
    If $(X,\alpha, G)$ and $(Y,\beta, H)$ are two odometers that are continuous orbit equivalent,
    then $\AutDyn(X,\alpha)$ and $\AutDyn(Y,\beta)$ are commensurable.
\end{corollary}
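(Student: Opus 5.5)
By Proposition~\ref{prop:right-aut}, $\AutDyn(X,\alpha)\cong G_{(G_n)}$ and $\AutDyn(Y,\beta)\cong H_{(H_n)}$ for any scales defining the two odometers. It therefore suffices to find isomorphic finite-index subgroups of the profinite groups $G_{(G_n)}$ and $H_{(H_n)}$. I will use the scales given by Theorem~\ref{Theorem: coe-iff-structure}: there are scales $(G_n)_{n\geq 0}$ and $(H_n)_{n\geq 0}$ for $X$ and $Y$, together with an isomorphism $\theta\colon H_0\to G_0$ satisfying $\theta(H_n)=G_n$ for all $n$. Since the odometer of a scale is conjugate to that of any subsequence, I may use these scales to compute both automorphism groups.

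The candidate subgroups are the level-$0$ cylinders. In $G_{(G_n)}$, take $C_0^G=\{(\xi_nG_n)_n : \xi_0\in G_0\}$. This is the kernel of the continuous projection $G_{(G_n)}\to G/G_0$, and that projection is surjective. Hence $C_0^G$ is a normal subgroup of index $[G:G_0]$. The analogous statement holds for $C_0^H\leqslant H_{(H_n)}$, which has index $[H:H_0]$.

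Next I identify $C_0^G$ with the inverse limit $\varprojlim G_0/G_n$. If $\xi_0\in G_0$, then compatibility forces $\xi_n\in G_0$ for all $n$, because $\xi_nG_0=\xi_0G_0$. So an element of $C_0^G$ is exactly a compatible sequence of cosets $\xi_nG_n$ with $\xi_n\in G_0$. Each such coset lies in $G_0/G_n$, a group since $G_n\trianglelefteq G$ and hence $G_n\trianglelefteq G_0$. The same identification gives $C_0^H\cong\varprojlim H_0/H_n$.

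Now $\theta$ induces the group isomorphisms $H_0/H_n\to G_0/G_n$, $hH_n\mapsto\theta(h)G_n$. These are well defined and bijective because $\theta(H_n)=G_n$, and they commute with the bonding maps. Therefore they induce an isomorphism $\varprojlim H_0/H_n\cong\varprojlim G_0/G_n$, which is even a homeomorphism. This gives isomorphic finite-index subgroups of the two automorphism groups, so $\AutDyn(X,\alpha)$ and $\AutDyn(Y,\beta)$ are commensurable.

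The only mild obstacle is the index-$0$ subtlety: the paper's cylinder $C_n$ is written with $\xi_nG_n=G_n$, and one should check that the level-$0$ cylinder is indeed a subgroup and that the inverse-limit identification is compatible with the group structure. Both follow from the coordinatewise group law on $\prod_n G/G_n$. If $G_0=G$ the statement is trivial. The hypothesis $[G:G_0]=[H:H_0]$ is not even needed for commensurability, though it shows that the two subgroups have the same index.
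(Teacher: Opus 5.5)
Your proposal is correct and follows essentially the same route as the paper: use the scales and the isomorphism $\theta\colon H_0\to G_0$ from the Cortez--Medynets theorem, check that the level-$0$ cylinders are finite-index normal subgroups with quotients $G/G_0$ and $H/H_0$, and let $\theta$ induce a coordinatewise isomorphism between them, which transfers to the automorphism groups via Proposition~\ref{prop:right-aut}. You spell out two points the paper leaves implicit: that every coset in $C_0^G$ has representatives in $G_0$, so $C_0^G\cong\varprojlim G_0/G_n$, and that the equality $[G:G_0]=[H:H_0]$ is not needed for commensurability.
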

\begin{proof}
By Theorem \ref{Theorem: coe-iff-structure} there exist scales $(G_i)_{i\geq 0}$ and $(H_i)_{i\geq 0}$ associated with $X$ and $Y$, respectively, such that there is an isomorphism $\theta: H_0\to G_0$ with $\theta(H_i)=G_i$ and $[G:G_0]=[H:H_0]$.
   First, note that
    \begin{align}\label{eq: cylinders firstlevel}
        C_0^X =\{(g_i G_i)_{i\geq 0}\in X :g_0 \in G_0\}\mbox{ and }C_0^Y =\{(h_i H_i)_{i\geq 0}\in Y :h_0 \in H_0\}
    \end{align}
     are normal subgroups of $X$ and $Y$, respectively.
      Now, using the properties of the map $\theta : H_0\to G_0$ we have that the map $\widehat{\theta}: C_0^Y\to C_0^X$, $\widehat{\theta}((h_iH_i)_{i\geq 0}) = (\theta(h_i)G_i)_{i\geq 0}$, is a group isomorphism.
    Since $X/C_0^X\cong G/G_0$, and $Y/C_0^Y\cong H/H_0$, we deduce that $[X:C_0^X] = [Y:C_0^Y]<\infty$. Hence,  the respective automorphism groups (\cref{prop:right-aut}) have isomorphic finite index subgroups. 
\end{proof}
When the acting group is $\Z$, Theorem \ref{Theorem: coe-iff-structure} and Proposition \ref{prop:right-aut} tell us the following.
\begin{corollary}
    If $(X,\alpha, \Z)$ and $(Y,\beta,\Z)$ are two COE odometers, then their automorphism groups are isomorphic, i.e., $$\AutDyn(X,\alpha)\cong \AutDyn(Y,\beta).$$
\end{corollary}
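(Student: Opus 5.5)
By Proposition~\ref{prop:right-aut}, $\AutDyn(X,\alpha)$ is isomorphic to the group $X$ itself, and similarly $\AutDyn(Y,\beta)\cong Y$. It therefore suffices to construct a topological group isomorphism $X\cong Y$ between the two $\Z$-odometers. Theorem~\ref{Theorem: coe-iff-structure} gives scales $(G_i)_{i\geq 0}$ and $(H_i)_{i\geq 0}$ of $\Z$. It also gives an isomorphism $\theta\colon H_0\to G_0$ with $\theta(H_i)=G_i$ for all $i$ and $[\Z:G_0]=[\Z:H_0]$.

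The first step uses the index condition. Every nontrivial subgroup of $\Z$ has the form $k\Z$, so $G_0=k\Z$ and $H_0=k'\Z$ with $k=[\Z:G_0]=[\Z:H_0]=k'$. Hence $G_0=H_0=k\Z$. The isomorphism $\theta\colon k\Z\to k\Z$ is then multiplication by $\pm1$. Since every subgroup of $\Z$ is invariant under $n\mapsto -n$, we get $G_i=\theta(H_i)=H_i$ for every $i\geq 0$. The two scales therefore coincide.

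The second step is now immediate. Both $X$ and $Y$ are conjugate to the odometer $\Z_{(G_i)}$ built on the common scale. Conjugacy preserves the centralizer up to isomorphism, so $\AutDyn(X,\alpha)\cong\AutDyn(Y,\beta)\cong \Z_{(G_i)}$.

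The main point to check is that the scales supplied by Theorem~\ref{Theorem: coe-iff-structure} really determine $X$ and $Y$ up to conjugacy. That is the meaning of ``determining the structure'' in that theorem. Conjugacy of the odometers then transports the right-multiplication description of Proposition~\ref{prop:right-aut}, which gives the group isomorphism. One could alternatively argue abstractly through the profinite completion: the completions with respect to the two scales are inverse limits of the same system $\Z/G_i$, so they are canonically isomorphic as groups.
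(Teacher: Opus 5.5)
Your proof is correct and follows the route the paper indicates: it combines Theorem~\ref{Theorem: coe-iff-structure} with Proposition~\ref{prop:right-aut}. You make explicit the step the paper leaves implicit. In $\Z$, equal index forces $G_0=H_0=k\Z$ and $\theta=\pm\mathrm{id}$, so the two scales coincide, the odometers are conjugate, and $\AutDyn(X,\alpha)\cong X\cong Y\cong\AutDyn(Y,\beta)$.
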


When the acting group is a free finitely generated abelian group $G = \Z^d$, (with $d\geq 2$), Example \ref{Example: COE-no-iso} shows that the previous corollary does not hold  in general.
To do that, we present a description of $\Z^d$ odometers as abstract groups. 
We start with  $\Z$ odometers.
Let $(G_n)_{n\geq 0}$ be a nested sequence of finite index subgroups of $\Z$. That is  $G_n=m(n)\Z$ for some $m(n)\in \Z^+$
and  $m(n)$ divides $m(n+1)$ for every $n\geq 0$.
Denote by $\mathcal{P}_{(G_n)}$ the set of prime numbers that divide $m(n)$ for some $n\geq 0$.
That is,
\begin{align*}
    \mathcal{P}_{(G_n)}=\{p\mbox{ prime}:p\;|\;m(n) \mbox{ for some }n\geq 0\}.
\end{align*}
Also, for every prime number $p\in\Z^+$ and $n\geq 0$ define $r_n(p)$ as
\begin{align*}
    r_n(p):=\max\{k\in \Z^+: p^k\;|\;m(n)\}, \mbox{ and }r(p)=\max_{n\geq 0}r_n(p).
\end{align*}
Observe that $p\in \mathcal{P}_{(G_n)}$ if and only if $r(p)>0$.
Using the previous notation, we have the following property.
\begin{proposition}\label{Lemma: decomposition-odometers}
    Let $(G_n)_{n\geq 0}$ be a decreasing sequence of finite index of $\Z$ such that $G_n=m(n)\Z$ with $m(n)\in \Z^+$.
    Then,  as abstract groups,
    \begin{align*}
        \Z_{(G_n)}\cong \prod_{\substack{p\in \mathcal{P}_{(G_n)}\\r(p)=\infty}}\Z_p\times \prod_{\substack{p\in\mathcal{P}_{(G_n)}\\r(p)<\infty}}\Z/p^{r(p)}\Z,
    \end{align*}
    where $\Z_p$ denotes the $p$-adic integers.

    Moreover, $\Z_{(G_n)}$ is a torsion-free group if and only if $r(p)=\infty$ for every $p\in \mathcal{P}_{(G_n)}$.   
\end{proposition}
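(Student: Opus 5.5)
The plan is to identify $\Z_{(G_n)}$ with the inverse limit $\varprojlim \Z/m(n)\Z$, which is what the definition of the exact odometer says: a point is a compatible sequence $(\xi_n+m(n)\Z)_{n\ge 0}$, and the group law is componentwise. The factors of each level will be split using the Chinese remainder theorem, and the inverse limit will be exchanged with the product over primes.

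\emph{Step 1 (levelwise decomposition).} Write $m(n)=\prod_p p^{r_n(p)}$. By the Chinese remainder theorem,
\[
\Z/m(n)\Z\cong\prod_{p\in\mathcal P_{(G_n)}}\Z/p^{r_n(p)}\Z .
\]
Only finitely many factors are nontrivial, and the factor is trivial whenever $r_n(p)=0$. Since $m(n)\mid m(n+1)$, we have $r_n(p)\le r_{n+1}(p)$. The transition map $\Z/m(n+1)\Z\to\Z/m(n)\Z$ is reduction, and under the CRT isomorphisms it becomes the product of the reductions $\Z/p^{r_{n+1}(p)}\Z\to\Z/p^{r_n(p)}\Z$. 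This holds because CRT is natural: reduction mod $m(n)$ followed by reduction mod $p^{r_n(p)}$ agrees with reduction mod $p^{r_{n+1}(p)}$ followed by reduction mod $p^{r_n(p)}$.

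\emph{Step 2 (exchange limit and product).} Inverse limits commute with arbitrary products, so
\[
\Z_{(G_n)}\cong\prod_{p\in\mathcal P_{(G_n)}}\varprojlim_n \Z/p^{r_n(p)}\Z .
\]
This isomorphism is even topological, although the statement only needs it as abstract groups.

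\emph{Step 3 (identify each factor).} Fix $p$.
\begin{itemize}
\item If $r(p)<\infty$, the nondecreasing sequence $r_n(p)$ is eventually equal to $r(p)$. The transition maps are then eventually identities, so the limit is $\Z/p^{r(p)}\Z$.
\item If $r(p)=\infty$, the sequence $r_n(p)$ is nondecreasing and unbounded. Using the remark after Proposition~\ref{lemma: if-only-Norm} (or simply cofinality of subsequences in inverse limits), the limit equals $\varprojlim_k\Z/p^k\Z=\Z_p$. To see this, pass to a subsequence on which $r_n(p)$ is strictly increasing, and note that the limit over the cofinal system $\{p^{r_n(p)}\}$ is the same as over all $p^k$.
\end{itemize}
This gives the displayed decomposition.

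\emph{Step 4 (torsion).} Each $\Z_p$ is torsion-free, being an integral domain of characteristic $0$, and a product of torsion-free groups is torsion-free. Conversely, if some $p\in\mathcal P_{(G_n)}$ has $0<r(p)<\infty$, then the factor $\Z/p^{r(p)}\Z$ is nontrivial. The element that is $1$ in this coordinate and $0$ elsewhere is a nonzero torsion element.

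Everything here is routine. The only point needing care is the naturality in Step 1, that is, checking that the CRT isomorphisms are compatible with the transition maps so that the product decomposition passes to the inverse limit. This is verified prime by prime, via the projections $\Z/m(n)\Z\to\Z/p^{r_n(p)}\Z$.
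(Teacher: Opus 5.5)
Your proposal is correct and follows the same route as the paper. Both split each level $\Z/m(n)\Z$ into its $p$-primary parts, commute the inverse limit with the product over primes, and identify each $p$-component as $\Z/p^{r(p)}\Z$ or $\Z_p$. Your explicit check that the Chinese remainder isomorphisms are compatible with the transition maps, which the paper leaves implicit, and your explicit torsion element for the converse of the ``moreover'' part are welcome additions.
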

    \begin{proof}
For every $p\in \mathcal{P}_{(G_n)}$, consider $G_n^p:=p^{r_n(p)}\Z$.
Using the fundamental theorem for finitely generated torsion abelian groups, we have that $\Z/G_n\cong \prod_{p\in \mathcal{P}}\Z/G_n^p$.
Note that for every $n\geq 0$, there exist only finitely many $p\in \mathcal{P}_{(G_n)}$ such that $G_{n}^p\neq \Z$.
Observe that for $p\in \mathcal{P}_{(G_n)}$ we have that 
\begin{align*}
    \varprojlim_n(\Z/G_n^p, \varphi_n^p)\cong\begin{cases}
    \Z/p^{r(p)}\Z &\mbox{ if }r(p)<\infty\\
    \Z_p&\mbox{ if }r(p)=\infty.
    \end{cases}
\end{align*}
    
As inverse limits and products are particular cases of limits in the category of abelian groups, they commute (see, for instance, \cite[Proposition 6.2.8]{Le14}) and we obtain the claimed isomorphism.

The second part of the Lemma follows since the components $\Z_p$ are torsion-free and we have only these components exactly when $r(p)=\infty$ for every $p\in \mathcal{P}_{(G_n)}$.  
\end{proof}

For higher rank $\Z^d$-odometer with  $d\geq 2$, we will inductively use the following property. 
Recall that for a finite index subgroup $\Gamma\leq \Z^d$, using the invariant factor decomposition for finitely generated abelian torsion groups, there is an isomorphism 
\[\Z^d/\Gamma\cong \Z/m_1\Z\times\cdots\times \Z/m_d\Z\]
where $m_1, \ldots m_d$ are $d$ positive integers,  $m_i$ divides $m_{i+1}$ for each $1\leq i <d$ and $m_1\cdots m_d=[\Z^d:\Gamma]$.

Now, using that $(G_n)_{n\geq 0}$ is a scale of $\Z^d$,  the former decomposition satisfies for each $n\ge 0$, $\Z^d/G_n\cong \Z/m_1(n)\Z\times \cdots\times \Z/m_d(n)\Z$ where 
\begin{enumerate}[(\roman*)]
    \item $m_i(n)$ divides $m_{i+1}(n)$ for every $1\leq i<d$;
    \item $m_i(n)$ divides $m_i(n+1)$ for each $n\geq 0$ and $1\leq i\leq d$.
\end{enumerate}
Only the item (ii) is not obvious from the invariant factor decomposition theorem, but it is a standard fact. See for instance exercice 4 in Section 3.9  in \cite{Jacobson} where the proof follows the same strategy as the one of the invariant factor decomposition theorem. The main idea consists in considering the abelian quotient group as $\Z/p\Z$-vector space and then to compute their dimensions.

Set then for every $1\leq i\leq d$, $(G_{n,i})_{n\geq 0}$ to be  the  sequence of nested finite index subgroups of $\Z$, where $G_{n,i}=m_i(n)\Z$. Beware it is not necessarily a scale since it could stabilize.

For every $1\leq i\leq d$ denote $\mathcal{P}^i_{(G_n)}=\mathcal{P}_{(G_{n,i})}$ and $r^i_{(G_n)}(p)=r_{(G_{n,i})}(p)$ for every prime number $p\in \Z^+$.
We have a multidimensional version of Proposition  \ref{Lemma: decomposition-odometers}, which is proved in a similar way.
\begin{proposition}\label{lemma:multidimensional-decomposition-odometers}
    Let $d\geq 1$.
    Let $(G_n)_{n\geq 0}$ be a scale of $\Z^d$.
    Then, with the previous notation there is the following abstract groups isomorphism
    \begin{align*}
        \Z^d_{(G_n)}\cong \prod_{i=1}^d \Z_{(G_{n,i})}.
    \end{align*}
    Moreover, $\Z^d_{(G_n)}$ is a torsion-free group if and only if for each $1\leq i\leq d$ and any prime number $p\in \Z^+$, $r^i_{(G_n)}(p)=\infty$ for every $p\in \mathcal{P}_{(G_n)}^i$.
\end{proposition}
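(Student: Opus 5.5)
The plan is to reduce the statement to Proposition~\ref{Lemma: decomposition-odometers} by putting the scale into a simultaneous diagonal form, then pass to the inverse limit. First, for each $n$ we fix an identification of $\Z^d/G_n$ with $\prod_{i=1}^d \Z/m_i(n)\Z$. The difficulty is that the bonding maps $\Z^d/G_{n+1}\to \Z^d/G_n$ need not be coordinatewise in these identifications. The cleanest remedy is to work with primary components. For each prime $p$, let $A_n^{(p)}$ denote the $p$-primary part of $\Z^d/G_n$; it is a finite abelian $p$-group of rank at most $d$. We have $\Z^d/G_n\cong\prod_p A_n^{(p)}$ canonically, and the bonding maps respect this decomposition because homomorphisms preserve primary parts. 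Since inverse limits commute with products (as used in the proof of Proposition~\ref{Lemma: decomposition-odometers}), we get $\Z^d_{(G_n)}\cong\prod_p \varprojlim_n A^{(p)}_n$.

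Next I would identify each $\varprojlim_n A^{(p)}_n$. The bonding maps are surjective, since $\Z^d\to\Z^d/G_n$ is onto. The limit $L_p$ is therefore a profinite abelian pro-$p$ group, topologically generated by the $d$ images of the standard basis, so it is a finitely generated $\Z_p$-module. By the structure theorem over the PID $\Z_p$, we can write $L_p\cong \Z_p^{a}\times\prod_j \Z/p^{e_j}\Z$ with at most $d$ factors. Its finite quotients are exactly the $A_n^{(p)}$, and their invariant factors are the $p$-parts of $m_1(n),\dots,m_d(n)$. Property (ii), together with monotonicity of these invariants, then shows the following. The $i$-th invariant factor of $L_p$, ordered compatibly with (i), is $\Z_p$ when $r^i(p)=\infty$, and $\Z/p^{r^i(p)}\Z$ when it is finite (trivial if $r^i(p)=0$). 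Concretely, I would compare the counts $|p^kL_p/p^{k+1}L_p|$ with $\lim_n|p^kA_n^{(p)}/p^{k+1}A_n^{(p)}|$; these counts determine the invariant factors.

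Finally, I would reassemble. Grouping the $i$-th factors over all primes gives $\prod_p(\text{$i$-th factor of }L_p)$. By Proposition~\ref{Lemma: decomposition-odometers} applied to the nested sequence $(G_{n,i})$, this is exactly $\Z_{(G_{n,i})}$. That proposition's proof never uses that the intersection is trivial, so it still applies when the sequence stabilizes. Hence $\Z^d_{(G_n)}\cong\prod_i \Z_{(G_{n,i})}$. The torsion-free criterion then follows immediately from the second part of Proposition~\ref{Lemma: decomposition-odometers}, since a product is torsion-free if and only if each factor is.

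The main obstacle is the middle step. Invariant factors of a limit need not a priori be limits of invariant factors, so I must justify that the invariant factors of the limit are the limits of the invariant factors of the finite quotients. The counting argument handles this: $p^kL_p/p^{k+1}L_p$ is the inverse limit of the corresponding finite groups. That limit is eventually constant because the ranks are bounded by $d$ and the sequence is nondecreasing.
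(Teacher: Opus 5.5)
Your proof is correct. The paper gives no separate argument here, only the remark that the result ``is proved in a similar way'' to Proposition~\ref{Lemma: decomposition-odometers}. That suggests fixing isomorphisms $\Z^d/G_n\cong\prod_i \Z/m_i(n)\Z$ and commuting the inverse limit with the product. Taken literally, this needs the bonding maps to be coordinatewise in these identifications, and that can fail whatever identifications are chosen. Take the scale of $\Z^2$ with $G_1=\{(x,y): p\mid y\}$, $G_2=\{(x,y): p^2\mid x,\ p\mid y\}$, and say $G_n=p^{n-2}G_2$ for $n\ge 2$:
\begin{itemize}
\item the kernel $G_1/G_2$ of $\Z^2/G_2\to\Z^2/G_1$ is cyclic of order $p^2$;
\item any coordinatewise surjection $\Z/p\Z\times\Z/p^2\Z\to\{0\}\times\Z/p\Z$ has kernel $\Z/p\Z\times p\Z/p^2\Z$, which is not cyclic.
\end{itemize}

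Your route views $L_p=\varprojlim_n A_n^{(p)}$ as a finitely generated $\Z_p$-module and reads its invariants off $\dim_{\mathbb{F}_p} p^kL_p/p^{k+1}L_p=\lim_n\#\{i: r^i_n(p)>k\}$. It needs no compatible coordinates at all, and it gives exactly the abstract (in fact topological) isomorphism claimed. The cost is the structure theorem over $\Z_p$ instead of a one-line appeal to the rank-one case.

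As a side benefit, the maps $A^{(p)}_{n+1}\to A^{(p)}_n$ are onto, so these dimensions are nondecreasing in $n$. Together with (i), this gives $r^i_n(p)\le r^i_{n+1}(p)$. So your argument actually proves property (ii) rather than needing it as input.

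Two small points to tighten:
\begin{itemize}
\item The sentence ``its finite quotients are exactly the $A_n^{(p)}$'' is false as stated. For example, $\Z_p$ with $A_n=\Z/p^{2n}\Z$ has the quotient $\Z/p\Z$, which is not in the list. What you actually use is that the kernels $K_n$ of $L_p\to A_n^{(p)}$ form a neighbourhood basis of $0$.
\item The identification $p^kL_p/p^{k+1}L_p\cong\varprojlim_n p^kA_n^{(p)}/p^{k+1}A_n^{(p)}$ deserves its one line. Since $p^{k+1}L_p$ is compact of finite index, it is open. Hence $K_n\subseteq p^{k+1}L_p$ for $n$ large, and then the induced map $p^kL_p/p^{k+1}L_p\to p^kA^{(p)}_n/p^{k+1}A^{(p)}_n$ is an isomorphism.
\end{itemize}
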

We present an example with two COE odometers that are not isomorphic as abstract groups.
\begin{exem}\label{Example: COE-no-iso}
Let the $(G_n)_{n\geq 0}$ and $(H_n)_{n\geq 0}$ be the scales in $\Z^2$ given by $G_n=3^{n-1}G_1\Z^2$ and $H_n=3^{n-1}H_1\Z^2$, and note that $A(G_n)=H_n$, where
\begin{align*}
    G_1=\begin{pmatrix}
        2&0\\
        0&2
    \end{pmatrix}, 
        H_1=\begin{pmatrix}
        4&0\\
        2&1
    \end{pmatrix}, \mbox{ and }A=\begin{pmatrix}
        2&0\\1&1/2
    \end{pmatrix}.
\end{align*}
Theorem \ref{Theorem: coe-iff-structure} implies that $\Z^2_{(G_n)}$ and $\Z^2_{(H_n)}$ are COE.
On the other hand, note that \begin{align*}
    \Z^2/G_n\cong (\Z/2\cdot3^{n-1}\Z)\times (\Z/2\cdot3^{n-1}\Z) \mbox{ and } \Z^2/H_n\cong (\Z/4\cdot 3^{n-1}\Z)\times (\Z/3^{n-1}\Z)
\end{align*}
 Propositions  \ref{Lemma: decomposition-odometers} and \ref{lemma:multidimensional-decomposition-odometers} guarantee that 
 \begin{align*}
     \Z^2_{(G_n)}\cong \Z_3\times \Z_3\times \Z/2\Z\times \Z/2\Z,\mbox{ and }\Z^2_{(H_n)}\cong \Z_3\times\Z_3\times \Z/4\Z.
 \end{align*}
 Consequently, $\Z^2_{(G_n)}$ and $\Z^2_{(H_n)}$ are not isomorphic as abstract groups.
\end{exem}

The issue with the previous example is that the odometers that we considered in the example are not torsion-free groups. 
When this condition is assumed, then we can guarantee that COE odometers are isomorphic as abstract groups.
To this, we use the algebraic description for $\Z^d$ odometers previously presented and  the following characterization of COE for odometers in $\Z^d$ given in \cite{GPS2019}.
We use the notation of Equation \eqref{eq:SuperGroup}.

\begin{theorem}[{\cite[Theorem 1.5]{GPS2019}, \cite[Proposition 2.7]{MerenkovSabitova}}]\label{theo:CharCOEodo} Let  $\Z^{d_G}_{(G_n)}$ and $\Z^{d_H}_{(H_n)}$ be two $\Z^{d_G}$-odometer and $\Z^{d_H}$-odometer associated with scales $(G_n)_{n\geq 0}$ and $(H_n)_{n\geq 0}$, respectively. 
Then these two odometers are COE if and only if,  $d_G = d_H$ and there is a $\psi\in{\rm GL}_{d_G}(\QQ)$, with $\det \psi = \pm 1$ such that $\psi(G^*_\infty) = H^*_\infty$. 
\end{theorem}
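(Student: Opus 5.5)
The plan is to deduce both directions from the Cortez--Medynets criterion, Theorem~\ref{Theorem: coe-iff-structure}, by passing between lattices and their dual lattices. Two elementary facts will be used repeatedly.
\begin{enumerate}[(a)]
\item For $T\in\GL_d(\QQ)$ and a lattice $L\leqslant\R^d$ we have $(TL)^*=T^{-t}L^*$ (writing $T^{-t}=(T^t)^{-1}$), and $L\subseteq L'$ if and only if $L'^*\subseteq L^*$.
\item Two scales $(G_n)$, $(G'_n)$ of $\Z^d$ that are mutually cofinal (each $G_n$ contains some $G'_m$ and conversely) give conjugate odometers. This follows from Proposition~\ref{lemma: if-only-Norm} applied to $\psi=\mathrm{Id}$, or more simply by passing to a common interlaced subsequence. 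Mutually cofinal scales also give the same group $G^*_\infty$, since the $G_n^*$ form an increasing union.
\end{enumerate}
Conversely, if a finite-index lattice $L\leqslant\Z^d$ satisfies $L^*\subseteq G^*_\infty$, then $L^*$ is finitely generated and the $G_n^*$ are nested, so $L^*\subseteq G_n^*$ for some $n$; that is, $G_n\subseteq L$. Thus $G^*_\infty$ encodes exactly the cofinality class of the scale, and it is a conjugacy invariant of the odometer.

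\emph{($\Rightarrow$).} Suppose the odometers are COE. By Theorem~\ref{Theorem: coe-iff-structure} there are scales, which by (b) we may take without loss of generality to be the given ones, with an isomorphism $\theta\colon H_0\to G_0$ satisfying $\theta(H_n)=G_n$ and $[\Z^{d_G}:G_0]=[\Z^{d_H}:H_0]$.

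Since $H_0\cong\Z^{d_H}$ and $G_0\cong\Z^{d_G}$, comparing ranks gives $d_G=d_H=:d$. The isomorphism $\theta$ extends uniquely to some $T\in\GL_d(\QQ)$. Comparing covolumes, $[\Z^d:G_0]=|\det T|\,[\Z^d:H_0]$, so the index equality forces $|\det T|=1$.

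By (a), $G_n^*=(TH_n)^*=T^{-t}H_n^*$ for every $n$. Taking the union over $n$ and setting $\psi:=T^t$, we get $\psi(G^*_\infty)=H^*_\infty$ with $\det\psi=\pm1$.

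\emph{($\Leftarrow$).} Given $\psi$ as in the statement, set $T:=(\psi^{-1})^{t}$, so that $T^{-t}=\psi^{-1}$ and $(TL)^*=\psi^{-1}L^*$. The argument proceeds in four steps.
\begin{enumerate}[1.]
\item \emph{Choose a starting level.} The group $\psi(\Z^d)$ is finitely generated and contained in $H^*_\infty$, so $\psi(\Z^d)\subseteq H_k^*$ for some $k$. This gives $\Z^d\subseteq\psi^{-1}H_k^*=(TH_k)^*$, i.e.\ $TH_k\subseteq\Z^d$. The same then holds for every $H_j$ with $j\ge k$.
\item \emph{Compare with $(G_n)$.} For each $j\ge k$, $(TH_j)^*=\psi^{-1}(H_j^*)$ is a finitely generated subgroup of $G^*_\infty$. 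By the remark after (b), $TH_j\supseteq G_n$ for some $n$. Symmetrically, applying $\psi$ to $G_n^*$ shows that each $G_n$ contains some $TH_j$.
\item \emph{Build the new scales.} Put $H'_i:=H_{k+i}$ and $G'_i:=TH'_i$. These are decreasing finite-index subgroups of $\Z^d$, normal because $\Z^d$ is abelian, with trivial intersection. By step 2 the scale $(G'_i)$ is mutually cofinal with $(G_n)$, so by (b) it determines the same odometer. Likewise $(H'_i)$, being a tail of $(H_n)$, determines the same odometer as $(H_n)$.
\item \emph{Apply Cortez--Medynets.} Take $\theta=T|_{H'_0}\colon H'_0\to G'_0$. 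Then $\theta(H'_i)=G'_i$ for all $i$, and $[\Z^d:G'_0]=|\det T|\,[\Z^d:H'_0]=[\Z^d:H'_0]$. Theorem~\ref{Theorem: coe-iff-structure} now gives that the two odometers are COE.
\end{enumerate}

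The main delicate point is the bookkeeping in (b). First, Theorem~\ref{Theorem: coe-iff-structure} may produce scales other than the given ones, and one must check that $G^*_\infty$ depends only on the conjugacy class of the odometer. Second, in the converse, only cofinality of $(TH_j)$ with $(G_n)$ is available rather than equality, so one must replace $(G_n)$ by the scale $(TH'_i)$ without changing the odometer. Both issues reduce to the finite-generation argument that places each $L^*$ inside a single $G_n^*$.
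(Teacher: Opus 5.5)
The paper gives no proof of this statement. It imports it from \cite[Theorem 1.5]{GPS2019} (for $d=1,2$) and \cite[Proposition 2.7]{MerenkovSabitova} (all $d$). Your route is genuinely different: you deduce the result from the Cortez--Medynets criterion (Theorem~\ref{Theorem: coe-iff-structure}) by lattice duality, and it is essentially sound. The isomorphism $\theta$ extends to $T\in\GL_d(\QQ)$, and the index condition is exactly $|\det T|=1$. The identity $(TL)^*=T^{-t}L^*$ turns $\theta(H_n)=G_n$ into $T^t(G^*_\infty)=H^*_\infty$. For the converse, finite generation converts equality of the increasing unions into mutual cofinality of $(TH_j)$ and $(G_n)$. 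This gives a short argument, uniform in $d$, that uses only elementary facts about lattices, with Cortez--Medynets as the only black box. Two points need repair.

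First, in ($\Leftarrow$) your choice $T:=(\psi^{-1})^t$ gives $T^{-t}=\psi$, not $\psi^{-1}$. With that $T$ you get $(TL)^*=\psi(L^*)$, so Step 1 would need $\psi^{-1}(\Z^d)\subseteq H^*_\infty$, which you do not have. Take $T:=\psi^t$ instead, consistent with $\psi=T^t$ in ($\Rightarrow$). Then $(TL)^*=\psi^{-1}(L^*)$ and Steps 1--4 go through as written.

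Second, in ($\Rightarrow$) you cannot take the scales supplied by Theorem~\ref{Theorem: coe-iff-structure} to be the given ones. You must work with the supplied scales, say $(\Gamma_n)$ and $(\Lambda_n)$, which yields $\psi(\Gamma^*_\infty)=\Lambda^*_\infty$. You then need $\Gamma^*_\infty=G^*_\infty$ and $\Lambda^*_\infty=H^*_\infty$. This requires that \emph{conjugate} $\Z^d$-odometers have mutually cofinal scales, which is the converse of your (b). Contrary to your closing remark, it does not follow from the finite-generation argument, since that only shows that equal unions force cofinality. A short dynamical argument is needed:
\begin{itemize}
\item Let $h\colon \Z^d_{(\Gamma_n)}\to\Z^d_{(G_n)}$ be a conjugacy. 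Composing $h$ with the projection onto $\Z^d/G_n$ gives a continuous equivariant map to a finite set.
\item Such a map is constant on the level-$m$ cylinders for some $m$.
\item Every element of $\Gamma_m$ maps each level-$m$ cylinder onto itself, so equivariance gives $\gamma+c=c$ in $\Z^d/G_n$ for $\gamma\in\Gamma_m$. Hence $\Gamma_m\subseteq G_n$, and symmetrically in the other direction.
\end{itemize}
Equivalently, $G^*_\infty/\Z^d$ is the group of continuous eigenvalues of the odometer, which is a conjugacy invariant.

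Also, (b) is not literally a consequence of Proposition~\ref{lemma: if-only-Norm}, which concerns a single scale; your interlacing argument is the right justification.
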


Let us mention \cref{theo:CharCOEodo} has been originally proved in \cite{GPS2019} for $d= 1,2$ and in \cite[Proposition 2.7]{MerenkovSabitova} for the general case. 

We are ready to prove:
\begin{theorem}\label{theo:IsoCOEZdOdometer}
    If $(X,\alpha, \Z^d)$ and $(Y,\beta, \Z^d)$ are two COE odometers such that $X$ and $Y$ are torsion-free groups, then $X$ and $Y$ are isomorphic as abstract groups. 
\end{theorem}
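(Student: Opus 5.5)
The plan is to show that, for torsion-free $\Z^d$-odometers, the abstract group structure is recorded by a list of integers $k_p$, one for each prime $p$. I will then check that these integers can be read off from the group $H^*_\infty$ of \eqref{eq:SuperGroup}. Theorem~\ref{theo:CharCOEodo} tells us that this group is preserved, up to a rational linear isomorphism, under continuous orbit equivalence.

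\emph{Step 1 (structure of $X$).} Let $(G_n)$ be a scale for $X$. By Propositions~\ref{Lemma: decomposition-odometers} and \ref{lemma:multidimensional-decomposition-odometers}, $X\cong\prod_{i=1}^d \Z_{(G_{n,i})}$. Torsion-freeness forces $r^i(p)=\infty$ whenever $p\in\mathcal P^i_{(G_n)}$. Hence
\[X\cong \prod_{p}\Z_p^{k_p(X)},\qquad k_p(X)=\#\{1\le i\le d:\ p\in\mathcal P^i_{(G_n)}\}.\]
The same holds for $Y$ with integers $k_p(Y)$. It therefore suffices to prove $k_p(X)=k_p(Y)$ for every prime $p$.

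\emph{Step 2 (reading $k_p$ off $G^*_\infty$).} Choose bases adapted to the Smith normal form, so that $\Z^d/G_n\cong\prod_i\Z/m_i(n)\Z$. Then $G_n^*$ is, up to a change of basis in $\GL_d(\Z)$ at each level, $\bigoplus_i \frac{1}{m_i(n)}\Z$. Passing to the union, $G^*_\infty/\Z^d$ is a torsion group whose $p$-primary part is $(\QQ_p/\Z_p)^{k_p}$; here the torsion-free hypothesis is what ensures that every $p$-exponent is unbounded. This is the Pontryagin dual of the $p$-part of $X$.

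I then claim the intrinsic formula
\[k_p(X)=d-\dim_{\mathbb F_p}\big(G^*_\infty/pG^*_\infty\big).\]
To prove it, localize at $p$: $G^*_\infty\otimes\Z_{(p)}\cong \Z[1/p]_{(p)}^{k_p}\oplus \Z_{(p)}^{d-k_p}$, i.e.\ $\QQ^{k_p}\oplus\Z_{(p)}^{d-k_p}$. Since $G^*_\infty/pG^*_\infty$ is unchanged by this localization, it is $\mathbb F_p^{d-k_p}$. This is also where torsion-freeness matters: a finite exponent $r^i(p)<\infty$ would contribute a non-divisible direction and break the formula. That is exactly the phenomenon in Example~\ref{Example: COE-no-iso}.

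\emph{Step 3 (conclusion).} Since $X$ and $Y$ are COE, Theorem~\ref{theo:CharCOEodo} provides $\psi\in\GL_d(\QQ)$ with $\psi(G^*_\infty)=H^*_\infty$. In particular $G^*_\infty\cong H^*_\infty$ as abstract groups. Hence $\dim_{\mathbb F_p}(G^*_\infty/pG^*_\infty)=\dim_{\mathbb F_p}(H^*_\infty/pH^*_\infty)$ for all $p$, so $k_p(X)=k_p(Y)$, and Step 1 gives $X\cong Y$.

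The main obstacle is Step 2: justifying carefully that the Smith normal form bases can be chosen compatibly along $n$, or avoiding this altogether. To sidestep it, I will try first to compute the localization $G^*_\infty\otimes\Z_{(p)}$ directly. It is an increasing union of free $\Z_{(p)}$-modules of rank $d$, and its divisible part has rank equal to the number of invariant factors whose $p$-adic valuation tends to infinity, which is $k_p$ under the torsion-free hypothesis.
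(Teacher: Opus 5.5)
Your overall architecture is sound, and it differs from the paper's. The paper compares $G^*_\infty/\Z^d\cong\hat X$ with $H^*_\infty/\psi(\Z^d)$ and cites Fuchs for the independence of the divisible $p$-ranks from the chosen free subgroup. You instead use the intrinsic invariant $\dim_{\mathbb F_p}(A/pA)$ of the abstract group $A=G^*_\infty\cong H^*_\infty$. Your formula $\dim_{\mathbb F_p}(G^*_\infty/pG^*_\infty)=d-k_p$ is also correct.

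The gap is in how you propose to prove that formula. The splitting $G^*_\infty\otimes\Z_{(p)}\cong\QQ^{k_p}\oplus\Z_{(p)}^{d-k_p}$ is false in general. So is your fallback claim that the divisible part of this localization has rank $k_p$. Finite-rank torsion-free modules over the non-complete ring $\Z_{(p)}$ need not split, and the compatibility of Smith bases along $n$, which you flag as the main obstacle, genuinely fails.

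Here is a counterexample. Take $d=2$ and $\pi\in\Z_p\setminus\QQ$, with integer truncations $\pi_n\equiv\pi \pmod{p^n}$, and set $G_n=\{(k_1,k_2)\in\Z^2: k_1+\pi_nk_2\equiv 0 \pmod{p^n}\}$.
\begin{itemize}
\item This is a scale with $\Z^2/G_n\cong\Z/p^n\Z$, so $X\cong\Z_p$ is torsion-free with $k_p=1$.
\item Its dual lattices are $G_n^*=\Z\,p^{-n}(1,\pi_n)+\Z\,(0,1)$.
\item Suppose $(x_1,x_2)\in\QQ^2$ is infinitely $p$-divisible in $G^*_\infty\otimes\Z_{(p)}$. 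Then for each $k$ one gets $v_p(x_2-\pi_nx_1)\geq k$ for all large $n$. Hence $x_2=\pi x_1$ in $\QQ_p$, which forces $x_1=x_2=0$.
\end{itemize}
So the localization is reduced and cannot be $\QQ\oplus\Z_{(p)}$.

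The repair is short and uses only the first half of your Step 2. Let $T=G^*_\infty/\Z^d$. Canonically $G_n^*/\Z^d\cong\widehat{\Z^d/G_n}$, so $T\cong\hat X\cong\bigoplus_q\Z(q^\infty)^{k_q}$. This is \cite[Theorem 2.6]{GPS2019}, and it requires no choice of bases, unlike your passage to the union. Apply the snake lemma to multiplication by $p$ on $0\to\Z^d\to G^*_\infty\to T\to 0$. Since $G^*_\infty\leqslant\QQ^d$ is torsion-free, this gives the exact sequence
\[0\to T[p]\to\Z^d/p\Z^d\to G^*_\infty/pG^*_\infty\to T/pT\to 0.\]
Because $T[p]\cong(\Z/p\Z)^{k_p}$ and $T/pT=0$, you get $\dim_{\mathbb F_p}(G^*_\infty/pG^*_\infty)=d-k_p$.

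Your remark on where torsion-freeness enters is misplaced. A finite $p$-part contributes equally to $T[p]$ and $T/pT$, so the formula holds for every $\Z^d$-odometer. Torsion-freeness is used only in Step 1, to ensure that the numbers $k_p$ determine $X$. In Example~\ref{Example: COE-no-iso}, both odometers have $k_2=0$ and $k_3=2$; they differ only in their finite $2$-parts.

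With this repair your route is more self-contained than the paper's. The same exact sequence, with an arbitrary full-rank free subgroup in place of $\Z^d$, proves exactly the independence statement the paper imports from Fuchs.
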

\begin{proof}
Let $(G_n)_{n\geq 0}$ and $(H_n)_{n\geq 0}$ be  scales of $\Z^d$ such that $X=\Z^d_{(G_n)}$ and $Y=\Z^d_{(G_n)}$ are torsion-free groups.
Thus, for a prime number $p\in \Z^+$, $p\in \mathcal{P}_{(G_n)}^i$ if and only if $r_{(G_n)}^i(p)=\infty$ and analogously for $(H_n)_{n\geq 0}$.
For a prime number $p\in \Z^+$, we denote by $r_{(G_n)}(p)$ the number of $i$'s in $\{1,\ldots, d\}$ such that $r^i_{(G_n)}(p)=\infty$.
That is, $r_{(G_n)}(p)=\sum_{i=1}^d\mathbbm{1}_{\mathcal{P}_{(G_n)}^i}(p)$.
Similarly, define $r_{(H_n)}(p)=\sum_{i=1}^d\mathbbm{1}_{\mathcal{P}_{(H_n)}^i}(p)$.
Using propositions \ref{lemma:multidimensional-decomposition-odometers} and \ref{Lemma: decomposition-odometers}  they are the following identification  as abstract groups,
\begin{align*}
    X\cong \prod_{\substack{p\; {\rm prime}\\r_{(G_n)}(p)>0}}(\Z_p)^{r_{(G_n)}(p)},\mbox{ and }     Y\cong \prod_{\substack{p\; {\rm prime}\\r_{(H_n)}(p)>0}}(\Z_p)^{r_{(H_n)}(p)}.
\end{align*}
In order to prove that $X\cong Y$ as abstract groups, we guarantee that  $r_{(G_n)}(p)=r_{(H_n)}(p)$ for every prime number $p\in \Z^+$.
Using \cite[Lemma 2.9.4]{RiZa10}, we know that the Pontryagin duals $\hat{X}$ and $\hat{Y}$ of $X$ and $Y$, respectively, are given by 
\begin{align*}
    \hat{X}\cong \bigoplus_{\substack{p\;{\rm prime}\\r_{(G_n)}(p)>0}} (\Z(p^\infty))^{r_{(G_n)}(p)}, \mbox{ and }    \hat{Y}\cong \bigoplus_{\substack{p\;{\rm prime}\\r_{(H_n)}(p)>0}} (\Z(p^\infty))^{r_{(H_n)}(p)}.
\end{align*}
where $\Z(p^\infty)$ is the Prüfer $p$-group and recalling that $\hat{\Z_p}\cong \Z(p^\infty)$ (see, for instance, \cite[Example 2.9.5]{RiZa10}). 
Consider $G^*_\infty$ and $H^*_\infty$ the abelian groups defined in \eqref{eq:SuperGroup} for the scales $(G_n)_{n\geq 0}$ and $(H_n)_{n\geq 0}$, respectively.
Since $X$ and $Y$ are COE, then by Theorem \ref{theo:CharCOEodo} there exists $\psi\in \GL_d(\mathbb{Q})$ such that $\psi(G^*_\infty)=H^*_\infty$.
Observe that by \cite[Theorem 2.6]{GPS2019} we have that $\hat{X}\cong G^*_\infty/\Z^d$ and similarly $\hat{Y}\cong H^*_\infty/\Z^d$. 
As $\psi\in \GL_d(\mathbb{Q})$, we have that $\psi(\Z^d)\leqslant H^*_\infty$ is also a free group in $H^*_\infty$.
Therefore, using \cite[Chapter 12, Page 414]{Fu15}, we deduce that $r_{(H_n)}(p)$ are invariants if we quotient $H^*_\infty$ by any free subgroup of $H^*_\infty$, namely $\Z^d$ or $\psi(\Z^d)$.
Since these numbers are invariants under group isomorphism and $G^*_\infty/\Z^d\cong H^*_\infty/\psi(\Z^d)$,
then, $r_{(G_n)}(p)=r_{(H_n)}(p)$ for any prime integer $p\in \Z^+$.
Thus, we conclude the proof.
\end{proof}

We also have that for $\Z^d$ the transformation groups of odometers that are COE are commensurable. 
To that, we first extend the group $\GL(\Z^d,\alpha)$ using the description given in Proposition \ref{lemma: if-only-Norm}.
Denote by $\SL^{\pm}_d(\mathbb{Q})$ the subgroup of matrices in $M_d(\mathbb{Q})$ with determinant  $1$ or $-1$ ($\pm 1$).  
Without leading to a confusion, for a matrix $A\in \GL_d(\mathbb{Q})$, we also refer to $A$ to the isomorphism induced by it $A:\mathbb{Q}^d\to \mathbb{Q}^d$, and vice versa.

Let $(G_n)_{n\geq 0}$ be a scale and $(\Z^d_{(G_n)},\alpha, \Z^d)$ be its odometer associated.
We define its \textit{transformation group over} $\mathbb{Q}$ by
\begin{align*}
    \GL(\mathbb{Q}^d,\alpha) = \{P\in \SL^{\pm}_d(\mathbb{Q})\mid \forall n>0, \exists m(n)>n  \textrm{ s.t. } P^{-1}(G_m), P(G_{m})\leqslant G_n\}. 
\end{align*}
Note that $\GL(\mathbb{Z}^d,\alpha) = \GL(\mathbb{Q}^d,\alpha)\cap \GL_d(\Z)$.
\begin{lemma}\label{lemma: Q-transformation-group}
    Let $(\Z^d_{(G_n)},\alpha,\Z^d)$ and $(\Z^d_{(H_n)},\beta,\Z^d)$ be two odometers that are continuous orbit equivalent
    with respective scales $(G_n)_n$, $(H_n)_n$ satisfying Theorem \ref{Theorem: coe-iff-structure}, with group isomorphism $\theta:H_0\to G_0$.
    Then, $$\GL(\Z^d,\alpha)=\theta\GL(\mathbb{Q}^d,\beta)\theta^{-1}\cap \GL_d(\Z). $$
\end{lemma}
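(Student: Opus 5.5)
The plan is to show first that conjugation by $\theta$ carries $\GL(\mathbb{Q}^d,\beta)$ exactly onto $\GL(\mathbb{Q}^d,\alpha)$, i.e.
\begin{align*}
\theta\GL(\mathbb{Q}^d,\beta)\theta^{-1}=\GL(\mathbb{Q}^d,\alpha).
\end{align*}
Intersecting with $\GL_d(\Z)$ then gives the claim, by the observation made just before the statement that $\GL(\Z^d,\alpha)=\GL(\mathbb{Q}^d,\alpha)\cap\GL_d(\Z)$. That observation in turn rests on Proposition \ref{lemma: if-only-Norm}: an element of $\GL_d(\Z)$ is an automorphism of $\Z^d$, and then the defining conditions coincide.

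\textbf{Step 1: $\theta$ is a rational matrix of determinant $\pm1$.} Since $H_0$ and $G_0$ are finite index subgroups of $\Z^d$, they contain $N\Z^d$ for some $N$. Hence the isomorphism $\theta\colon H_0\to G_0$ extends uniquely to a $\mathbb{Q}$-linear map of $\mathbb{Q}^d=H_0\otimes\mathbb{Q}$, which I also denote $\theta\in\GL_d(\mathbb{Q})$. Choosing a $\Z$-basis of $H_0$, we have $[\Z^d:H_0]=|\det B_{H_0}|$, where $B_{H_0}$ is the matrix of that basis. Its image under $\theta$ is a basis of $G_0$, so $[\Z^d:G_0]=|\det\theta|\,[\Z^d:H_0]$. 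The hypothesis $[\Z^d:G_0]=[\Z^d:H_0]$ of Theorem \ref{Theorem: coe-iff-structure} therefore forces $\det\theta=\pm1$, that is, $\theta\in\SL^{\pm}_d(\mathbb{Q})$. Consequently $P\mapsto\theta P\theta^{-1}$ is an automorphism of $\SL^{\pm}_d(\mathbb{Q})$.

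\textbf{Step 2: transport of the defining conditions.} Let $P\in\SL^{\pm}_d(\mathbb{Q})$ and put $Q=\theta P\theta^{-1}$. Since $\theta(H_k)=G_k$ for every $k$, we get
\begin{align*}
P(H_m)\leqslant H_n \iff \theta P\theta^{-1}(\theta H_m)\leqslant\theta H_n \iff Q(G_m)\leqslant G_n,
\end{align*}
and in the same way $P^{-1}(H_m)\leqslant H_n\iff Q^{-1}(G_m)\leqslant G_n$. The same index $m(n)$ therefore works on both sides, which gives the displayed equality of Step 1's goal.

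\textbf{Step 3: the choice of scales.} The groups $\GL(\Z^d,\alpha)$ and $\GL(\mathbb{Q}^d,\alpha)$ are defined through a scale, whereas Theorem \ref{Theorem: coe-iff-structure} only provides \emph{some} scales determining the odometers. I need these groups not to depend on the chosen scale, up to conjugacy of the odometer. For $\GL(\Z^d,\alpha)$ this independence holds because it is intrinsic: it is the image of $j$. For the $\mathbb{Q}$-version I would simply fix the scales given by Theorem \ref{Theorem: coe-iff-structure}, as the statement does. I would also note that passing to a subsequence, or to any cofinal scale, does not change the condition, because the quantifier ``for all $n$ there is $m$'' only involves cofinality.

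\textbf{Main obstacle.} I expect the only delicate points to be Step 1 and the bookkeeping in Step 3. In Step 1 the key facts are that $\theta$ is genuinely a $\mathbb{Q}$-linear map with $|\det\theta|=1$, so that conjugation stays inside $\SL^{\pm}_d(\mathbb{Q})$, and this is exactly where the equality of indices is used. In Step 3 one must check that the scale implicit in $\GL(\mathbb{Q}^d,\alpha)$ is the one for which $\theta(H_n)=G_n$.
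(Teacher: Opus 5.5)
Your proof is correct and is essentially the paper's argument: you transport the conditions $P(H_m),P^{-1}(H_m)\leqslant H_n$ through $\theta(H_k)=G_k$, and then intersect with $\GL_d(\Z)$ using Proposition~\ref{lemma: if-only-Norm}. Extending $\theta$ to an element of $\GL_d(\mathbb{Q})$ is a point the paper leaves implicit, but the fact that $\det\theta=\pm1$ is not actually where the equality of indices is needed here, since $\det(\theta P\theta^{-1})=\det P$ for every $\theta\in\GL_d(\mathbb{Q})$.
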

\begin{proof}

Let $T\in \GL(\Z^d,\alpha)$.
That is, for every $n\geq 0$, there exists $m(n)\geq n$ such that $T(G_m),T^{-1}(G_m)\leq G_n$.
Now, using that $\theta(H_k)=G_k$, for every $k\geq 0$, we have that 
\begin{align*}
    T(\theta (H_m)),T^{-1}(\theta (H_m))\leqslant \theta (H_n).
\end{align*}
Therefore, $\theta^{-1} T \theta\in \GL(\mathbb{Q}^d,\beta)$, and consequently, $T\in \theta\GL(\mathbb{Q}^d,\beta)\theta^{-1}\cap\GL_d(\mathbb{Z})$.
The other inclusion follows in an analogous way.
\end{proof}

\begin{corollary}
        If $(\Z^d_{(G_n)},\alpha, \Z^d)$ and $(\Z^d_{(H_n)},\beta, \Z^d)$ are two continuous orbit equivalent $\Z^d$-odometers, then their transformation groups are commensurable.
\end{corollary}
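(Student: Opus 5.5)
By Lemma \ref{lemma: Q-transformation-group}, $\GL(\Z^d,\alpha)=\theta\GL(\mathbb{Q}^d,\beta)\theta^{-1}\cap \GL_d(\Z)$, where $\theta$ is the $\QQ$-linear extension of $\theta\colon H_0\to G_0$. Symmetrically, $\GL(\Z^d,\beta)=\GL(\mathbb{Q}^d,\beta)\cap\GL_d(\Z)$. Hence the two transformation groups are the intersections of the single group $\Gamma:=\GL(\mathbb{Q}^d,\beta)\leqslant \SL^{\pm}_d(\QQ)$ with the two arithmetic groups $\GL_d(\Z)$ and $\theta^{-1}\GL_d(\Z)\theta$, after conjugating by $\theta$. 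Conjugation is an isomorphism, so it suffices to show that $\Gamma\cap \GL_d(\Z)$ and $\Gamma\cap\theta^{-1}\GL_d(\Z)\theta$ are commensurable.

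The plan is to show that $\GL_d(\Z)$ and $\Lambda:=\theta^{-1}\GL_d(\Z)\theta$ are commensurable inside $\GL_d(\QQ)$, that is, $\GL_d(\Z)\cap\Lambda$ has finite index in both. Note that $\Lambda$ is the stabilizer of the lattice $L:=\theta^{-1}(\Z^d)$, since $\theta$ maps the lattice $H_0$ onto $G_0$ and extends to $\QQ^d$. Choose an integer $N\geq 1$ with $N\Z^d\subseteq L$ and $NL\subseteq \Z^d$; this is possible because both are full-rank lattices in $\QQ^d$. Consider the principal congruence subgroup $\Gamma(N^2)=\ker(\GL_d(\Z)\to\GL_d(\Z/N^2\Z))$, which has finite index in $\GL_d(\Z)$. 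For $g\in\Gamma(N^2)$ and $v\in L$ we have $Nv\in\Z^d$, so
\[(g-I)v=\tfrac1N (g-I)(Nv)\in \tfrac{1}{N}N^2\Z^d=N\Z^d\subseteq L.\]
Thus $g$ preserves $L$, and likewise $g^{-1}$, so $\Gamma(N^2)\leqslant\Lambda$. By symmetry, $\Lambda$ contains a finite-index subgroup of itself lying in $\GL_d(\Z)$. Hence $\GL_d(\Z)\cap\Lambda$ has finite index in both groups.

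Finally we use the elementary fact that if $A,B$ are commensurable subgroups of a group and $\Gamma$ is any subgroup, then $\Gamma\cap A$ and $\Gamma\cap B$ are commensurable. Indeed, $\Gamma\cap A\cap B$ has finite index in $\Gamma\cap A$, because $[\Gamma\cap A:\Gamma\cap A\cap B]\leq[A:A\cap B]$ by injectivity of cosets. The same holds with $A$ and $B$ exchanged. Applying this with $A=\GL_d(\Z)$ and $B=\Lambda$ gives the corollary.

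The main obstacle is bookkeeping rather than depth. The delicate point is that Lemma \ref{lemma: Q-transformation-group} requires the $\QQ$-extension of $\theta$ to be used consistently; if $\theta$ does not have determinant $\pm1$, we must check that conjugation by $\theta$ preserves $\SL^{\pm}_d(\QQ)$. It does, since conjugation preserves determinants.
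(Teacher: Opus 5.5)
Your proof is correct and follows the paper's route. Both arguments use Lemma~\ref{lemma: Q-transformation-group} so that conjugation by $\theta$ identifies $\GL(\Z^d,\alpha)\cap\theta\GL_d(\Z)\theta^{-1}$ with $\GL(\Z^d,\beta)\cap\theta^{-1}\GL_d(\Z)\theta$, and then use the commensurability of $\GL_d(\Z)$ with its $\theta$-conjugate to get finite index on both sides; the only difference is that the paper cites Margulis for that commensurability, while you prove it directly via the congruence subgroup $\Gamma(N^2)$ stabilizing the lattice $\theta^{-1}(\Z^d)$.
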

\begin{proof}
Let assume that $(G_n)_{n\geq 0}$ and $(H_n)_{n\geq 0}$ are two scales satisfying Theorem \ref{Theorem: coe-iff-structure} with the group isomorphism $\theta: H_0\to G_0$. 
As noticed before, since $[\Z^d:G_0]=[\Z^d:H_0]$, $\theta\in \SL_d^{\pm}(\mathbb{Q})$.
Notice that applying conjugation by $\theta$ and Lemma \ref{lemma: Q-transformation-group}, we have that the groups 
\begin{align}\label{eq:finiteindexsubgroups-in-GL} \GL(\mathbb{Q}^d,\beta)\cap \GL_d(\Z)\cap \theta^{-1}\GL_d(\mathbb{Z})\theta &=
 \GL(\Z^d,\beta)\cap \theta^{-1}\GL_d(\Z)\theta \subseteq \GL(\mathbb{Z}^d,\beta)   \\
\nonumber \GL(\mathbb{Q}^d,\alpha)\cap \GL_d(\Z)\cap \theta\GL_d(\mathbb{Z})\theta^{-1}&= \GL(\Z^d,\alpha)\cap \theta\GL_d(\mathbb{Z})\theta^{-1}\subseteq \GL(\mathbb{Z}^d,\alpha)   
\end{align}
 are isomorphic.

Using that for any element $A\in \GL_d(\mathbb{Q})$, the group $A\GL_d(\Z)A^{-1}\cap \GL_d(\Z)$ has a finite index in $\GL_d(\Z)$ (see, for instance, \cite{Ma91}), the subgroup $\theta\GL_d(\Z)\theta^{-1}\cap \GL_d(\Z)$ has a finite index in $\GL_d(\Z)$.
Therefore, $\GL(\Z^d,\beta)\cap \theta^{-1}\GL_d(\Z)\theta$ is a finite index subgroup of $\GL(\Z^d,\beta)$ and $\GL(\Z^d,\alpha)\cap \theta \GL_d(\Z)\theta^{-1}$ is a finite index subgroup of $\GL(\Z^d,\alpha)$ that are isomorphic.
\end{proof}

Using these results, we can also deduce that normalizers are commensurable by COE $\mathbb{Z}^d$- odometers.

\begin{proposition}\label{prop:commensurableNormalizer}
    If $(\Z^d_{(G_n)},\alpha, \Z^d)$ and $(\Z^d_{(H_n)},\beta, \Z^d)$ are two continuous orbit equivalent $\Z^d$-odometers, then their normalizer groups are commensurable.
\end{proposition}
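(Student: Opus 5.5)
We will use the splitting of Proposition~\ref{coro:splitsequence}. It gives
\[\No(\Z^d_{(G_n)},\alpha)\cong \Z^d_{(G_n)}\rtimes \GLN(\Z^d,\alpha),\]
where $\GLN(\Z^d,\alpha)$ acts on $\Z^d_{(G_n)}$ through the maps $B_\psi$. Concretely, for $\psi\in\GLN(\Z^d,\alpha)$ the automorphism part is conjugated by $B_\psi$, namely $B_\psi R_{\bm\zeta}B_\psi^{-1}=R_{B_\psi(\bm\zeta)}$, and $B_\psi$ restricts to a continuous group automorphism of the odometer extending $\psi$ on $\tilde{\Z^d}$. The same decomposition holds for $\beta$. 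By Theorem~\ref{Theorem: coe-iff-structure}, we may choose the scales so that there is an isomorphism $\theta:H_0\to G_0$ with $\theta(H_n)=G_n$ and $[\Z^d:G_0]=[\Z^d:H_0]$. Extending $\theta$ to $\mathbb{Q}^d$ gives $\theta\in\SL^\pm_d(\mathbb{Q})$. As in the proof of Corollary~\ref{Coro: virtually-iso-Aut}, it also induces a topological group isomorphism $\widehat\theta:C_0^Y\to C_0^X$ between finite index clopen subgroups of the two odometers.

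\textbf{Step 1: finite index subgroups on each side.} Let
\[\Gamma_\alpha=\GLN(\Z^d,\alpha)\cap\theta\GL_d(\Z)\theta^{-1}\cap \Stab(G_0),\]
where $\Stab(G_0)$ denotes the matrices $T$ with $T(G_0)=G_0$. Define $\Gamma_\beta$ analogously, with $\theta^{-1}$ in place of $\theta$ and $H_0$ in place of $G_0$. The corollary preceding the statement shows that conjugation by $\theta$ identifies the groups $\GLN(\Z^d,\alpha)\cap\theta\GL_d(\Z)\theta^{-1}$ and $\GLN(\Z^d,\beta)\cap\theta^{-1}\GL_d(\Z)\theta$, and that each has finite index in the corresponding transformation group. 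Intersecting further with the stabilizer of $G_0$ (respectively $H_0$) costs only finite index, since $\GL_d(\Z)$ permutes the finitely many subgroups of a fixed index. The identity $\theta(H_0)=G_0$ then gives $\theta\Gamma_\beta\theta^{-1}=\Gamma_\alpha$. Every $B_\psi$ with $\psi\in\Gamma_\alpha$ preserves $C_0^X$, because $C_0^X$ is the closure of $\tilde G_0$ and $B_\psi(\tilde g)=\widetilde{\psi(g)}$. Hence
\[N_\alpha:=C_0^X\rtimes\Gamma_\alpha\]
is a subgroup of $\No(\Z^d_{(G_n)},\alpha)$. Its index is at most $[\Z^d_{(G_n)}:C_0^X]\,[\GLN(\Z^d,\alpha):\Gamma_\alpha]<\infty$. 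Define $N_\beta=C_0^Y\rtimes\Gamma_\beta$ in the same way.

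\textbf{Step 2: the isomorphism.} Define $\Phi:N_\beta\to N_\alpha$ by $(\bm\zeta,\psi)\mapsto(\widehat\theta(\bm\zeta),\theta\psi\theta^{-1})$. It is a bijective map, so it suffices to check that it is a homomorphism, which reduces to the equivariance
\[\widehat\theta\bigl(B^\beta_\psi(\bm\zeta)\bigr)=B^\alpha_{\theta\psi\theta^{-1}}\bigl(\widehat\theta(\bm\zeta)\bigr)\quad\text{for }\bm\zeta\in C_0^Y,\ \psi\in\Gamma_\beta.\]
By continuity and density of $\tilde H_0$ in $C_0^Y$, it is enough to verify this on $\tilde h$ with $h\in H_0$. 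There both sides equal $\widetilde{\theta\psi(h)}$, using $B_\psi(\tilde h)=\widetilde{\psi(h)}$ and $\widehat\theta(\tilde h)=\widetilde{\theta(h)}$. The only point to check is that $\psi(h)\in H_0$, which holds because $\psi\in\Gamma_\beta$ stabilizes $H_0$.

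\textbf{Main obstacle.} The delicate step is ensuring that $B_\psi$ genuinely preserves $C_0^X$ and is compatible with $\widehat\theta$. This is exactly why the stabilizer condition is imposed in Step 1, and why the argument needs the fact that intersecting with it keeps finite index. Compatibility with the abstract identification of the normalizer as a semidirect product follows from the uniqueness statement in the proof of Proposition~\ref{coro:splitsequence}.
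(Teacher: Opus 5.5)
Your proposal is correct and follows the paper's proof essentially step by step. Your $\Gamma_\alpha$ coincides with the paper's $G_0^{fix}$, since $M(G_0)\GL_d(\Z)M(G_0)^{-1}$ is exactly the set of rational matrices stabilizing $G_0$, and your $N_\alpha$, $N_\beta$, $\Phi$ are the paper's $K^X$, $K^Y$, $F$. The only minor differences are that you get finite index of the stabilizer from the finiteness of the set of subgroups of index $[\Z^d:G_0]$ rather than from the commensurator property of $\GL_d(\mathbb{Q})$, and that you verify the equivariance $\widehat\theta\circ B^\beta_\psi=B^\alpha_{\theta\psi\theta^{-1}}\circ\widehat\theta$ explicitly by density, which the paper only asserts.
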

\begin{proof}
        Let assume that $(G_n)_{n\geq 0}$ and $(H_n)_{n\geq 0}$ are two scales for $\Z^d_{(G_n)}$ and $\Z^d_{(H_n)}$, respectively, that satisfy Theorem \ref{Theorem: coe-iff-structure} with $\theta: H_0\to G_0$ the isomorphism in that theorem. To ease notation, we denote by $X=\mathbb{Z}^d_{(G_n)}$ and $Y=\mathbb{Z}^d_{(H_n)}$.
    Denote by $\iota(C_0^X)\leqslant \AutDyn(X,\alpha)$ the subgroup given by 
    \begin{align*}
        \iota(C_0^X)=\{R_{\bm{\xi}}\in \AutDyn(X,\alpha)\mid \bm{\xi}\in C_0^X\}, 
    \end{align*}
    where $R_{\bm{\xi}}$ is the translation by $\bm{\xi}\in X$ given by $R_{\bm{\xi}}(\bm{\zeta})=\bm{\zeta}+\bm{\xi}$, defined in 
    Proposition \ref{prop:right-aut} and $C_0^X$ the $0$-level cylinder set of $X$ defined as in \eqref{eq: cylinders firstlevel}.
    In a similar manner, define $\iota({C_0^Y})$.
    Notice that $C_0^X$ is isomorphic to $\iota(C_0^X)$ and $C_0^Y$ is isomorphic to $\iota(C_0^Y)$.

    Now, define the subgroups of ${G_0^{fix}}\leqslant \GLN(\mathbb{Z}^d,\alpha)$ and ${H_0^{fix}}\leqslant \GLN(\mathbb{Z}^d,\beta)$ as
    \begin{align*}
        {G_0^{fix}}=\GL(\mathbb{Z}^d,\alpha)\cap \theta\GL_d(\mathbb{Z})\theta^{-1}\cap  M(G_0)\GL_d(\mathbb{Z})( M(G_0))^{-1},\\
        {H_0^{fix}}=\GL(\mathbb{Z}^d,\beta)\cap \theta^{-1}\GL_d(\mathbb{Z})\theta\cap  M(H_0)\GL_d(\mathbb{Z})(M(H_0))^{-1},
    \end{align*}
    where $M(H_0), M(G_0)\in M_d(\mathbb{Z})$ denote the $d\times d$ matrices associated to $H_0$ and $G_0$, respectively.
    That is, $H_0=M(H_0)\Z^d$ and $G_0=M(G_0)\Z^d$ and $|\det(M(H_0))|=[\mathbb{Z}^d:H_0]$.\footnote{This matrix is unique up to right multiplication of elements in $\GL_d(\mathbb{Z})$.}
    Using that $M(G_0), M(H_0)\in \GL_d(\mathbb{Q})$ and that $$\GL_d(\mathbb{Q})\subseteq \{A\in \GL_d(\mathbb{R})\mid [\GL_d(\Z):A\GL_d(\Z)A^{-1}\cap \GL_d(\Z)]<\infty\},$$ we deduce that ${G_0^{fix}}$ and ${H_0^{fix}}$ have finite index over $\GLN(\mathbb{Z}^d,\alpha)$ and $\GLN(\mathbb{Z}^d,\beta)$, respectively.
    Moreover, conjugation by $\theta^{-1}$ and the fact that $\theta(H_0)=G_0$ give us that $G_0^{fix}$ and $H_0^{fix}$ are isomorphic.
    
Since $G_0^{fix}\subseteq M(G_0)\GL_d(\Z)M(G_0)^{-1}$, every $\psi\in G_0^{fix}$ satisfies $\psi(G_0)=G_0$.
Likewise, every $w\in H_0^{fix}$ satisfies $w(H_0)=H_0$.
Therefore ${G_0^{fix}}$ and ${H_0^{fix}}$ preserve the level-zero cylinders $C_0^X$ and $C_0^Y$, respectively.
Since $\No(X,\alpha)=\AutDyn(X,\alpha)\rtimes \GLN(G, \alpha)$ and $\No(Y,\beta)=\AutDyn(Y,\beta)\rtimes \GLN(G, \beta)$, it follows that
$$K^X:=\iota(C_0^X)\rtimes G_0^{fix} \mbox{ and } K^Y:=\iota(C_0^Y)\rtimes H_0^{fix}$$
are subgroups of $\No(X,\alpha)$ and $\No(Y,\beta)$, respectively, by using the product given by Proposition \ref{coro:splitsequence}.
Moreover, $[\No(X,\alpha):K^X]<\infty$, $[\No(Y,\beta):K^Y]<\infty$.

To finish the proof, we claim that the map 
\begin{align*}
F:K^Y\to K^X, F(R_{\bm\xi}, \psi) = (R_{\hat{\theta}(\bm\xi)}, {\theta \psi \theta^{-1}}), 
\end{align*} 
is a group isomorphism.  

Indeed, first, observe that it is well defined since 
 $\psi\in H_0^{fix}\subseteq \theta^{-1}\GL_d(\mathbb{Z})\theta$.
The fact that $\hat{\theta}$ is a bijection implies that $F$ is a bijection.

To prove that $F$ is a homomorphism, let $R_{\bm\xi},R_{\bm\zeta}\in \iota(C_0^Y)$ and $\psi,\varphi\in {H_0^{fix}}$.
Note that the multiplication in $N(Y,\beta)$ seen as semidirect product is given by 
\begin{align*}
    (R_\xi,\psi)\cdot (R_\zeta,\varphi)=(R_{\xi}\circ R_{B^Y_\psi(\zeta)},\psi\varphi),
\end{align*}
where $B^Y_\psi$ are the maps given in Proposition \ref{coro:splitsequence} and Remark \ref{rem:desc-B_psi}.
Similarly, we have the same rule for $N(X,\alpha)$ by using the maps $B^X_\omega$ for $\omega \in \GLN(G,\alpha)$.
Using the definitions of $B^Y_\varphi$ and $\hat{\theta}$ we have that 
\begin{align*}
    \hat{\theta}(B_\varphi^Y(\bm\xi))=B_{\theta\varphi\theta^{-1}}^X(\hat{\theta}(\bm\xi)).
\end{align*}
This equation implies that $F$ is a homomorphism.
Therefore, $\No(X,\alpha)$ and $\No(Y,\beta)$ are commensurable.
\end{proof}

When we consider non-abelian groups, the situation is completely different as we see right below.

The following example gives that COE odometers for non-abelian groups can give non-commensurable Normalizers groups and even not quasi-isometric.
\begin{exem}[Continuously orbit equivalent odometers with non-commensurable normalizer groups]\label{ex:NonQINormalizer}
Consider the groups $G^{(A)} = \mathbb{Z}^2\rtimes_\varphi \Z/4\Z$ and $H^{(D)}
=\Z^2\rtimes_\phi \Z/4\Z$, where
\begin{align*}
    \varphi :\Z/4\Z&\to \GL_2(\Z)& \phi:\Z/4\Z&\to \GL_2(\Z)\\
    1&\mapsto A:= \begin{pmatrix}
        0&-1\\
        1&0
    \end{pmatrix}& 
    1&\mapsto D:=\begin{pmatrix}
    -1&0\\
    0&-1
    \end{pmatrix}.
\end{align*} 
For each $n\in \N$, consider the finite index normal subgroups $G_n = 2^n\Z^2\times \{0\}\triangleleft G$ and $H_n = 2^n\Z^2\times \{0\}\triangleleft H$.
The  sequences $(G_n)_{n\geq 0}$ and $(H_n)_{n\geq 0}$ are scales.
It is clear that $G_0\cong H_0$ with the identity map as the group isomorphism, and under this isomorphism $G_n$ is sent to $H_n$.
Moreover, notice that $[G:G_0] = [H:H_0] =4$.
Therefore, Theorem \ref{Theorem: coe-iff-structure} implies that the odometers $(G_{(G_n)},\alpha, G)$ and $(H_{(H_n)},\beta, H)$ are continuously orbit equivalent.

To compare the normalizer groups of $ G_{(G_n)}$ and $H_{(H_n)}$ we first describe $\Aut(G)$ and $\Aut(H)$.

 We begin with $\Aut(G)$: 
Any element in $G$ is of the form $({n},r)$, with ${n}\in \Z^2$ and $r\in \Z/4\Z$.
Moreover, for every ${v}\in \Z^2$ the following relation holds:
\begin{align}\label{eq: 2-G}
({0},1)({v},0) = (A{v}, 0)({0},1).
\end{align}
Notice that $G_0$ is the only  torsion free normal subgroup of minimal index $4$ of $G$.
Therefore, any isomorphism $\psi\in \Aut(G)$ preserves this subgroup $\psi(G_0) = G_0$.
So, every $\psi\in \Aut(G)$ is characterized by $r\in (\Z/4\Z)^*$, ${w}\in \Z^2$ and $R\in \GL_2(\Z)$ such that
\begin{align*}
    \psi({0},1) = ({w},r),\;
    \psi({v},0) = (R{v},0), \mbox{ for any ${v}\in \Z^2$.}
\end{align*}
Actually, the linear map  $R$ has some restrictions.
Applying $\psi$ on \eqref{eq: 2-G} provides  $    ({w},r)(R{v},0) = (RA{v},0)({w},r)$,
and consequently, $({w}+A^rR{v},r) = ({w}+RA{v},r)$.
This implies that $R$ belongs to the subgroup $\No_{\GL_2(\Z)}(\langle A\rangle)$, the normalizer of the group $\langle A\rangle$ in $\GL_2(\Z)$.
A standard calculation shows that $\No_{\GL_2(\Z)}(\langle A\rangle) = \langle A, C\rangle$ is isomorphic to the dihedral group,  where $C = \begin{pmatrix}
    -1&0\\0&1
\end{pmatrix}$.
More precisely, it is possible to check that 
\begin{align*}
    \psi({0},1) = ({w},r),\;
    \psi({v},0) = (R{v},0)      \mbox{ for any } {v}\in \Z^2,
\end{align*}
where
\begin{align*}
\begin{cases}
        R\in \langle A\rangle , &\mbox{ if }r=1\\
        R\in \No_{\GL_2(\Z)}(\langle A\rangle)\setminus\langle A\rangle, &\mbox{ if }r=3.
    \end{cases}
\end{align*}

This description implies that $\Aut(G)$ is a semidirect product of the normal subgroup $N$ and the subgroup $M$, where
\begin{align*}
    N =& \{\psi\in \Aut(G) : \psi({v},0) = ({v},0), \forall{v}\in \Z^2  \},\\
    M =& \{\psi\in \Aut(G) : \psi({0},1) = ({0},1) \}\cup \{\psi\in \Aut(G) : \psi({0},1) = ({0},3) \}. 
\end{align*}
In fact, observe that $\psi \in \Aut(G)$, with $\psi(0,1)=(w,r)$, $\psi(v,0)=(Rv,0)$, can be decomposed as $\psi=\psi_R\circ \psi_w$, where $\psi_R(0,1)=(0,r)$, $\psi_R(v,0)=(Rv,0)$ , and $\psi_w(0,1)=(R^{-1}w,1)$, $\psi_w(v,0)=(v,0)$.
It is clear that $\psi_R\in M$ and $\psi_w\in N$.
Moreover, we have that $M\cap N=\{{\rm Id}_G\}$.

On the other hand, $M=\langle \psi_A,\psi_C\rangle$, where $\psi_A(0,1)=(0,1),\psi_A(v,0)=(Av,0)$ and $\psi_C(0,1)=(0,3),\psi_C(v,0)=(Cv,0)$.
Moreover, $\psi_A^4=\psi_C^2=(\psi_A\circ\psi_C)^2=id_G$, hence, $M\cong D_4$.

Observe that for every $i\geq 0$ and $\psi\in \Aut(G)$, $\psi(G_i) = G_i$.
Thus, Lemma \ref{lemma: if-only-Norm} implies that $\GLN(G,\alpha) = \Aut(G)$, and $\No( G_{(G_n)},\alpha)$ is a semidirect product between $ G_{(G_n)}$ and $\Aut(G)$.

Let us describe $\Aut(H)$: 
Analogous to the case with $G$,  $H_0$ is the only free torsion normal subgroup of minimal index $4$.
Therefore, $\psi(H_0) = H_0$ for every element $\psi\in \Aut(H)$.
Again, we have that every $\psi\in \Aut(H)$ is characterized by $R\in \GL_2(\Z)$, $r\in (\Z/4\Z)^*$ and ${w}\in \Z^2$ such that
\begin{align*}
    \psi({0},1)=({w},r),\psi({v},0)=(R{v},0), \mbox{ for every }{v}\in \Z^2.
\end{align*}
The additional relation given to the presentation of $H$ by the semidirect product reads as
\begin{align*}
    ({0},1)({v},0) = (B{v},0)({0},1),\mbox{ for each }{v}\in \Z^2.
\end{align*}
Since $B$ commutes with any element in $\GL_2(\Z)$, there is no  restriction on the choice of $R$.
Thus, $\Aut(H)$ is a semidirect product of the normal subgroup $L$ and the subgroup $K$ given by
\begin{align*}
L =& \{\psi\in \Aut(H): \psi({v},0) = ({v},0), \forall {v}\in \Z^2\}\cong \Z^2\times \Z/2\Z,\\
    K =& \{\psi\in \Aut(H): \psi({0},1) = ({0},1)\}\cong \GL_2 (\Z). 
\end{align*}
This implies that $\Aut(G)$ and $\Aut(H)$ are not commensurable since, for example, $\Aut(G)$ is amenable, whereas $\Aut(H)$ is not.
Moreover, any $\psi\in \Aut(H)$ satisfies  $\psi(H_i) = H_i$ for each $i\geq 0$.
Consequently, $\GLN(H,\beta) = \Aut(H)$ and by Proposition \ref{coro:splitsequence}, $\No(H_{(H_i)},\beta)$ is a semidirect product of $H_{(H_i)}$ by $\Aut(H)$. 
Furthermore, we have that $\No( G_{(G_n)},\alpha)$ and $\No(H_{(H_i)},\beta)$ are not commensurable for the same reason: one is amenable, and the other is not.
\end{exem}

\section{Control on the automorphsim group of Toeplitz subshifts}\label{Sec:Construction-Toeplitz}

In this section, we use the machinery of 
semi-cocycle to give explicit  construction of Toeplitz subshifts with   controlled automorphism groups. The control of this group  is given through a double inclusion in Theorem \ref{Theorem: diverse-Aut}. It is due to the properties of the discontinuity point of the semi-cocycle. In particular, it will enable us in the next section to provide  examples of minimal subshifts with all the acting group (even in the non-abelian case) or only its center as automorphism groups.

Let $(G_n)_{n\geq 0}$ be a scale and $(G_{(G_n)},\alpha, G)$ its odometer associated.
In this section, $G_{(G_n)}$ is endowed with the metric $\rho$ defined in \eqref{eq: metric-odometer}.
Recall that this metric satisfies $\rho(\bm{\xi}_1\bm{\zeta},\bm{\xi}_2\bm{\zeta})=\rho(\bm{\zeta}\bm{\xi}_1,\bm{\zeta}\bm{\xi}_2)= \rho(\bm{\xi}_1,\bm{\xi}_2)$ for any $\bm{\xi}_1,\bm{\xi}_2,\bm{\zeta}\in G_{(G_n)}$.
For every $\bm{\xi}\in  G_{(G_n)}$, define the set $A_{\bm{\xi}}$ as follows:
$$A_{\bm{\xi}}=\{{g}\in{G}: \bm{\xi}\tilde{g}\bm{\xi}^{-1}\notin\tilde{G}\}.$$  That is, $A_{\bm\xi}$ is the set that satisfies $\tilde{A_{\bm\xi}}=\tilde{G}\setminus [\bm\xi\tilde{G}\bm\xi^{-1}\cap \tilde{G}]$, where\; $\tilde{}:G\to G_{(G_n)}$ is the embedding map introduced in Section \ref{Subsection: odometers}.
From this, we notice that $A_{\bm\xi}^c:=G/A_{\bm\xi}$ is the set such that $\tilde{A_{\bm\xi}^c}=\bm\xi\tilde{G}\bm\xi^{-1}\cap \tilde{G}$, which is a group.
Hence, $A_{\bm\xi}^c$ is itself a group.

\begin{theorem}\label{Theorem: diverse-Aut}
    Let $G$ be a countable, residually finite, infinite group, $(G_n)_{n\geq 0}$  a scale and $\bm{\zeta}_0,\bm{\zeta}_1,\dots, \bm{\zeta}_s\in G_{(G_n)}$ be such that $\bm{\zeta}_0=\tilde{1_G}$ and $O_\alpha(\bm{\zeta}_i)\cap O_\alpha(\bm{\zeta}_j)=\emptyset$ whether  $0\leq i<j\leq s$.
     Then, there exist
     $g_1,\dots, g_s\in G$  
and  a Toeplitz subshift  $(X,\sigma, G)$, whose maximal equicontinuous factor is $G_{(G_n)}$, and its automorphism group satisfies
    \begin{align*}
       (Z(G) \leqslant)\quad\   \Centr_{G}(\Theta)\leqslant \AutDyn(X,\sigma)\leqslant \bigcap_{i=1}^s A_{\bm\zeta_i}^c \quad\ (\leqslant G),
            \end{align*}
    where $\Centr_G(\Theta) = \{g\in G: \tilde{g}\bm{\zeta}=\bm{\zeta}\tilde{g}, \forall \bm{\zeta}\in \Theta\}$, and $\Theta=\{\tilde{g_j}\bm{\zeta}_i : 0 \leq i \leq s, 0\leq j\leq s-i\}$ with $\tilde{g_0} = \tilde{1_G}$.
\end{theorem}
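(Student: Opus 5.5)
We will build $X$ as a semicocycle extension $X_{f,\bm\xi}$ of $G_{(G_n)}$ over the alphabet $\Sigma=\{0,1,\dots,s+1\}$ (or a similar finite set). The discontinuity set $D_f$ will be a finite set, chosen so that the automorphisms are forced to respect it. By Corollary~\ref{Coro:injectiveAut} and Proposition~\ref{prop:right-aut}, every $\varphi\in\AutDyn(X,\sigma)$ satisfies $\hat\pi(\varphi)=R_{\bm\eta}$ for some $\bm\eta\in G_{(G_n)}$. By Lemma~\ref{lemma: fibers}, $R_{\bm\eta}$ preserves each set $\{\bm\zeta:|\pi^{-1}(\bm\zeta)|=n\}$. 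By the Proposition identifying $\bigcup_g\alpha^gD_f$ with the non-injectivity locus, these sets are unions of orbits of points of $D_f$. The plan is to put, on the orbit of each $\bm\zeta_i$, a configuration of discontinuities with a distinct fiber cardinality. We do this by placing $s-i+1$ discontinuity points $\tilde{g_j}\bm\zeta_i$, $0\le j\le s-i$; these points are exactly $\Theta$. The fiber sizes then separate the orbits, so $R_{\bm\eta}$ must map $O_\alpha(\bm\zeta_i)$ to itself for each $i$.

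\textbf{Construction of $f$.} We choose $g_1,\dots,g_s\in G$ with the points $\tilde{g_j}\bm\zeta_i$ pairwise at controlled, distinct distances in the ultrametric $\rho$. We then define $f$ on the complement of $\Theta$ so that it is locally constant there, and so that at each point of $\Theta$ it takes several values in its closure. A concrete way is to use annuli $\{\rho(\cdot,\bm\theta)=2^{-k}\}$ around each $\bm\theta\in\Theta$ and alternate symbols along levels $k$. Distinct symbols per orbit make the fiber counts distinguishable. Invariance under no rotation follows since $D_f=\Theta$ is finite and only the trivial translate preserves the sectional data. Lemma~\ref{lemma:invariantnorotation} then makes $X$ a minimal almost automorphic (Toeplitz) extension with maximal equicontinuous factor $G_{(G_n)}$. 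We take $\bm\xi$ to be a point whose orbit avoids $\Theta$, which is justified by Lemma~\ref{lemma: equal-cocycles}.

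\textbf{Upper bound.} Since $R_{\bm\eta}$ preserves $O_\alpha(\bm\zeta_0)=\tilde G$, we get $\bm\eta\in\tilde G$, say $\bm\eta=\tilde h$. Preservation of $O_\alpha(\bm\zeta_i)$ gives $\bm\zeta_i\tilde h=\tilde g\bm\zeta_i$ for some $g$. Hence $\bm\zeta_i\tilde h\bm\zeta_i^{-1}\in\tilde G$, so $h^{-1}$, and thus $h$, lies in $A^c_{\bm\zeta_i}$. Via the injective $\hat\pi$, this gives $\AutDyn\le\bigcap_i A^c_{\bm\zeta_i}$ (with the identification $\varphi\mapsto h$).

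\textbf{Lower bound.} For $g\in\Centr_G(\Theta)$, the map $R_{\tilde g}$ fixes $\Theta$ pointwise up to the identification $\bm\theta\tilde g=\tilde g\bm\theta$, and it is an isometry of $\rho$. We need to check that $f\circ R_{\tilde g}$ and $f$ generate conjugate sequences. Here $R_{\tilde g}$ acts on $\tilde G$ by $\tilde h\mapsto\tilde h\tilde g$, which should be realized by the map $x\mapsto(h\mapsto x(hg))$ on $X$. We verify that this map lands in $X$ by checking that $R_{\tilde g}$ sends $\Theta$ to the translated configuration $\tilde g\Theta$ compatibly with the annulus structure.

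\textbf{Main obstacle.} The hardest step is choosing the $g_j$ and the symbol pattern so that two requirements hold simultaneously. First, the fiber cardinalities must truly distinguish the orbits, which requires exact computation of $|\pi^{-1}|$ at $\tilde{g_j}\bm\zeta_i$ when several discontinuities share an orbit. Second, the definition of $f$ must be canonical enough, in terms of $\rho$-distance to $\Theta$ only, that it is respected by the centralizer translations. I would first try making $f(\bm\omega)$ depend only on the pair (nearest $\bm\theta\in\Theta$, parity of $-\log_2\rho(\bm\omega,\bm\theta)$).
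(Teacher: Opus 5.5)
Your overall architecture matches the paper's:
\begin{itemize}
\item a semicocycle extension with $D_f=\Theta$;
\item $\hat\pi(\varphi)=R_{\bm\eta}$, via Corollary~\ref{Coro:injectiveAut} and Proposition~\ref{prop:right-aut};
\item fiber cardinalities and Lemma~\ref{lemma: fibers} for the upper bound;
\item a shift-type map for $\Centr_G(\Theta)$ for the lower bound.
\end{itemize}

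\textbf{The main gap.} It lies in the step you defer as the ``main obstacle'', and the construction you suggest for it fails. Since $\rho$ is left-invariant, for any $\tilde h_n\bm\xi\to\bm\zeta_i$ one has $\rho(\tilde g_j\tilde h_n\bm\xi,\tilde g_j\bm\zeta_i)=\rho(\tilde h_n\bm\xi,\bm\zeta_i)=2^{-k_n}$ for \emph{every} $j$. So at all discontinuity positions $g_0,\dots,g_{s-i}$ of a fiber point, the value of $f$ is read off the same level $k_n$. Suppose $f$ near each $\bm\theta\in\Theta$ depends only on $k\bmod 2$, as with your nearest-point/parity rule or two alternating symbols per annulus. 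Then a point of $\pi^{-1}(\bm\zeta_i)$ is determined by a single parity bit. Hence every orbit meeting $\Theta$ has fibers of size $2$, independently of $i$. Lemma~\ref{lemma: fibers} then no longer prevents $R_{\bm\eta}$ from permuting these orbits, so neither $\bm\eta\in\tilde G$ nor $\bm\zeta_i\bm\eta\in O_\alpha(\bm\zeta_i)$ follows. ``Distinct symbols per orbit'' does not repair this: the lemma only sees cardinalities, and automorphisms are block codes that need not preserve symbols.

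\textbf{The missing idea} is to make the discontinuities on a common orbit independent. The paper does this as follows.
\begin{itemize}
\item Each $\bm\theta\in\Theta$ gets its own prime $p_{\bm\theta}$.
\item On the annulus $\rho(\cdot,\bm\theta)=2^{-k}$ with $k>m$, set $f=1$ iff $p_{\bm\theta}\nmid k$. The cutoff $m$ ensures at most one $\bm\theta$ is active, so $f$ is $\{0,1\}$-valued.
\item By the Chinese remainder theorem, every $Q\in\{0,1\}^{\{g_0,\dots,g_{s-i}\}}$ is realised along suitable levels $k\to\infty$, while all other coordinates are forced.
\end{itemize}
This gives $|\pi^{-1}(\bm\zeta)|=2^{s+1-i}$ exactly on $O_\alpha(\bm\zeta_i)$. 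These sizes are pairwise distinct and maximal on $\tilde G$, which is exactly what the upper bound needs. Since you allow a larger alphabet, an alternative would be to let $f$ cycle through $c_i$ symbols near points of $O_\alpha(\bm\zeta_i)$, with pairwise distinct $c_i$. Either way, some such mechanism must be specified and the fibers actually counted.

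\textbf{Two smaller points.}
\begin{itemize}
\item \emph{Invariance under no rotation.} Finiteness of $D_f$ alone does not give it. It only shows that the relevant rotation $\bm\delta$ satisfies $\Theta\bm\delta=\Theta$, i.e.\ $\bm\delta\in\Theta$ (since $\tilde{1_G}\in\Theta$) has finite order, and $G_{(G_n)}$ may have torsion. The paper uses the choice $g_t\in G_{n_{t-1}}\setminus G_{n_t}$, with $n_t$ large, to show $\tilde g_s\bm\delta\notin\Theta$ although $\tilde g_s\in\Theta$.
\item \emph{The lower-bound map.} Under the convention $\sigma^g y(h)=y(hg)$, which makes $x_{f,\bm\xi}(h)=f(\tilde h\bm\xi)$ equivariant, your map $x\mapsto x(\cdot\,g)$ is just $\sigma^g$. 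It lies over $\alpha^g$, not over $R_{\tilde g}$. The right map is $x\mapsto x(g\,\cdot)$. It sends $x_{f,\bm\xi}$ to $x_{f,\bm\xi\tilde g}$ because $f(\tilde g\bm\omega)=f(\bm\omega\tilde g)$ for $g\in\Centr_G(\Theta)$, by bi-invariance of $\rho$ and $\tilde g\bm\theta=\bm\theta\tilde g$. Lemma~\ref{lemma: equal-cocycles} then gives $X_{f,\bm\xi}=X_{f,\bm\xi\tilde g}$.
\end{itemize}
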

Now, we present a proof of Theorem \ref{Theorem: diverse-Aut}.
We first proceed with the construction of a Toeplitz array via semicocycles:
Let $(G_n)_{n\geq 0}$ be a scale and $\bm{\zeta}_0,\dots, \bm{\zeta}_s\in G_{(G_n)}$ be as in Theorem \ref{Theorem: diverse-Aut}.
Let $n_1,\dots, n_s\in \N$, $g_0,\dots, g_s\in G$ be
such that $n_t+2<n_{t+1}$, and  $\frac{1}{2^{n_t}}<\frac{1}{5}\min\left\{{\rho(\bm{\zeta}_i,\bm{\zeta}_j)}: 0\leq i<j\leq s\right\}$, with
   $g_t\in G_{{n_{t-1}}}\setminus G_{{n_t}}$, for all $t\in \{1,\dots, s\}$.
Denote $
    r':=\frac{1}{5}\min\left\{{\rho(\bm{\zeta}_i,\bm{\zeta}_j)}: 0\leq i<j\leq s\right\}.$
Observe that $\tilde{g_j}\bm{\zeta}_i\neq \tilde{g_k}\bm{\zeta}_t$ whenever $j\neq k$ or $i\neq t$ for $0\leq i,t\leq s$, $0\leq j\leq s-i$, and $0\leq k\leq s-t$.
Indeed, if $i\neq t$,
\begin{align*}
    \rho(\tilde{g_j}\bm{\zeta}_i,\tilde{g_k}\bm{\zeta}_t)&\geq \rho(\tilde{g_j}\bm{\zeta}_i,\tilde{g_j}\bm{\zeta}_t)-\rho(\tilde{g_k}\bm{\zeta}_t,\tilde{g_j}\bm{\zeta}_t)\\
    &= \rho(\bm{\zeta}_i,\bm{\zeta}_t)-\rho(\tilde{g_k},\tilde{g_j})>0.
\end{align*}
If $i = t$ and $j\neq k$, then $\rho(\tilde{g_j}\bm{\zeta}_i,\tilde{g_k}\bm{\zeta}_i) = \rho(\tilde{g_j},\tilde{g_k})>0$.

Denote by $m:=\max\{-\log_2(\rho(\bm{\zeta},\bm{\zeta}'))+1: \bm{\zeta},\bm{\zeta}'\in\Theta\mbox{ are distinct elements}\}<\infty$.
This, in particular,  implies that $\frac{1}{2^{m-1}}\leq \rho(\bm\zeta,\bm\zeta')$ for every pair of distinct elements $\bm\zeta,\bm\zeta'\in \Theta$.
For every $\bm\zeta\in \Theta$, consider a primer number $p_{\bm\zeta}\in \Z^+$ in such a way $p_{\bm\zeta}\neq p_{\bm\zeta'}$ for distinct elements $\bm\zeta,\bm\zeta'\in \Theta$.
For every $\bm\zeta\in \Theta$, consider the map 
\begin{align}\label{eq:f_p}
f'_{\bm\zeta}:\left\{\frac{1}{2^n}:n\in\mathbb{N}_0\right\}\to \{0,1\},\;    f'_{\bm\zeta}\left(\frac{1}{2^n}\right)=\begin{cases}
        0,& \mbox{if }n\equiv 0\bmod{p_{\bm\zeta}} \mbox{ or }n\leq m,\\
        1,&\mbox{otherwise.}
    \end{cases}
\end{align}
Define the map 
\begin{align}\label{eq:trivial-aut}
f: G_{(G_n)}\to \Z,\;    f(\bm{\omega})=\begin{cases}
        0,&\mbox{ if }\bm{\omega}\in\Theta,\\
        \sum_{\bm{\zeta}\in\Theta} f'_{\bm\zeta}(\rho(\bm{\omega},\bm{\zeta})),& \mbox{ otherwise.}
    \end{cases}
\end{align}
Observe that this map only take values in $\{0,1\}$. 
In fact, if there exists $\bm{\omega}\in  G_{(G_n)}$ such that $\sum_{\bm\zeta\in \Theta} f'_{\bm\zeta}(\rho(\bm\omega, \bm\zeta))>1$. 
In particular, there exist different elements $\bm\zeta,\bm\zeta'\in \Theta$ such that $\rho(\bm{\omega},\bm{\zeta})=\frac{1}{2^a}$, and $\rho(\bm{\omega},\bm{\zeta}')=\frac{1}{2^b}$ for integers $a,b\in \mathbb{N}$ with $a,b>m$. Then, 
\begin{align*}
    \frac{1}{2^{m-1}}\leq\rho(\bm{\zeta},\bm{\zeta}')< \frac{1}{2^a}+\frac{1}{2^b}<\frac{1}{2^{m-1}},
\end{align*}
a contradiction with the choice of $m$.
The next lemma shows it is possible to select the orbits  of $\bf \zeta\in \Theta$ from properties of their $\pi$-fibers. 
\begin{lemma}\label{prop:second-Toeplitz}
The map  $f$ defined in \eqref{eq:trivial-aut} is an invariant under no rotations semicocycle over $( G_{(G_n)},\bm{\xi})$, for every $\bm{\xi}\in  G_{(G_n)}\setminus \bigcup_{\bm{\zeta}\in \Theta}O_\alpha(\bm{\zeta})$, and the system $(X_f,\sigma, G)$ is a Toeplitz extension of $ (G_{(G_n)},\alpha, G)$, whose maximal equicontinuous factor is equal to $ G_{(G_n)}$.
Moreover,
\begin{align*}
    |\pi^{-1}(\bm\zeta)|=\begin{cases}
        2^{s+1-i},&\mbox{ if }\bm\zeta\in O_\alpha(\bm\zeta_i), 0\leq i\leq s,\\
        1,&\mbox{ otherwise.}
    \end{cases}
\end{align*}
\end{lemma}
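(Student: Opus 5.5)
The plan is to verify the hypotheses of Lemma~\ref{lemma:invariantnorotation} for $f$ and then read off the fibres from the sections $F(\bm\omega)$, using the proposition identifying $\{\bm\zeta:|\pi^{-1}(\bm\zeta)|>1\}$ with $\bigcup_{g}\alpha^g D_f$.

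\textbf{Step 1: continuity and $D_f=\Theta$.} Since $\rho$ is an ultrametric, if $\bm\omega\notin\Theta$ then $\rho(\bm\omega',\bm\zeta)=\rho(\bm\omega,\bm\zeta)$ for all $\bm\zeta\in\Theta$ and all $\bm\omega'$ in a small ball around $\bm\omega$. Hence $f$ is locally constant, and so continuous, on $G_{(G_n)}\setminus\Theta$. At $\bm\zeta\in\Theta$, every ball around $\bm\zeta$ contains points at distance exactly $2^{-n}$ from $\bm\zeta$ for all large $n$; the scale is strictly decreasing, so every sphere is nonempty. For $n>m$ only the term $f'_{\bm\zeta}$ can be nonzero at such a point, as shown in the computation preceding the lemma. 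Taking $n\equiv 0$ and $n\not\equiv 0 \bmod p_{\bm\zeta}$ shows $F(\bm\zeta)=\{0,1\}$, so $D_f=\Theta$.

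\textbf{Step 2: semicocycle, Toeplitz and maximal equicontinuous factor.} For $\bm\xi\notin\bigcup_{\bm\zeta\in\Theta}O_\alpha(\bm\zeta)$ the orbit $O_\alpha(\bm\xi)$ avoids $\Theta$, so $f|_{O_\alpha(\bm\xi)}$ is a continuous $\{0,1\}$-valued semicocycle. By Lemma~\ref{lemma: equal-cocycles}, $X_f$ does not depend on the choice of $\bm\xi$. Once invariance under no rotation is established (Step 4), Lemma~\ref{lemma:invariantnorotation} gives that $X_f$ is a minimal almost $1$-$1$ extension of $G_{(G_n)}$. It is therefore Toeplitz, and $G_{(G_n)}$ is its maximal equicontinuous factor.

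\textbf{Step 3: fibre cardinalities.} The points of $\Theta$ lying in $O_\alpha(\bm\zeta_i)$ are exactly the $s+1-i$ distinct points $\tilde{g_j}\bm\zeta_i$, $0\le j\le s-i$; the orbits are disjoint and these points were shown to be distinct. For $\bm\omega=\tilde h\bm\zeta_i$, a point $x\in\pi^{-1}(\bm\omega)$ is determined by its values at the $s+1-i$ coordinates $g$ with $\tilde g\bm\omega\in\Theta$. At every other coordinate, continuity of $f$ forces $x(g)=f(\tilde g\bm\omega)$. This gives $|\pi^{-1}(\bm\omega)|\le 2^{s+1-i}$.

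For the reverse inequality, fix a target pattern in $\{0,1\}^{s+1-i}$. Approach $\bm\zeta_i$ along points $\tilde{k}\bm\xi$ at distance $2^{-n}$ from $\bm\zeta_i$. Left multiplication by $\tilde{g_j}$ is an isometry, so $\tilde{g_j}\tilde k\bm\xi$ is at distance $2^{-n}$ from $\tilde{g_j}\bm\zeta_i$ for every $j$ simultaneously. The value there is governed by $n \bmod p_{\tilde{g_j}\bm\zeta_i}$, and these primes are distinct. By the Chinese remainder theorem, infinitely many $n$ realise any prescribed pattern of residues, zero or nonzero. Limits of the corresponding shifts of $x_f$ give $2^{s+1-i}$ distinct points in the fibre. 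Off the orbits of $\Theta$ the fibre is a singleton by the proposition on $\bigcup_g\alpha^gD_f$.

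\textbf{Step 4 (main obstacle): invariance under no rotation.} Suppose $F(\tilde g\bm\eta_1)=F(\tilde g\bm\eta_2)$ for all $g$. If $\bm\eta_1$ lies in an orbit of $\Theta$, the sets $\{g: |F(\tilde g\bm\eta_\ell)|=2\}$ agree. By Step 3 their sizes identify the orbit, and their positions force $\bm\eta_2=\bm\eta_1$, since $\tilde g\bm\eta_1=\tilde{g'}\bm\eta_2$ for the relevant $g,g'$. Otherwise, suppose $\rho(\bm\eta_1,\bm\eta_2)=2^{-k}$. First try choosing $g$ with $\tilde g\bm\eta_1$ at distance $2^{-n}$ from $\bm\zeta_0$ for some $n\gg k$. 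By the isosceles property, $\tilde g\bm\eta_2$ then sits at distance exactly $2^{-k}$ from $\bm\zeta_0$ and far from the rest of $\Theta$. Toggling $n\bmod p_{\bm\zeta_0}$ then changes $f(\tilde g\bm\eta_1)$. The hard point is that moving $g$ also moves $\tilde g\bm\eta_2$. I would fix the level-$k$ class of $\tilde g\bm\eta_1$, which also fixes the distance of $\tilde g\bm\eta_2$ to $\bm\zeta_0$, and vary $g$ only inside $G_{k'}$ for $k'>k$. This keeps $f(\tilde g\bm\eta_2)$ constant while $f(\tilde g\bm\eta_1)$ takes both values, a contradiction. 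The condition that $n>m$ be large is what makes this work uniformly.
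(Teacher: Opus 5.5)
Your Steps 1--3 are sound and follow the paper's proof. Your Chinese-remainder choice of radii is the paper's $r_i+m_in$ construction, and you are right to state explicitly that the scale must be strictly decreasing, which the paper uses tacitly.

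\textbf{The gap is in Step 4, Case B.} What controls $\tilde g\bm\eta_2$ is not $\rho(\bm\eta_1,\bm\eta_2)$ but $\bm\delta:=\bm\eta_1^{-1}\bm\eta_2$. By bi-invariance of $\rho$, $\rho(\tilde g\bm\eta_2,\bm\delta)=\rho(\tilde g\bm\eta_1\bm\delta,\bm\zeta_0\bm\delta)=\rho(\tilde g\bm\eta_1,\bm\zeta_0)$. So as $\tilde g\bm\eta_1$ approaches $\bm\zeta_0$, the point $\tilde g\bm\eta_2$ approaches $\bm\delta$ at exactly the same rate.

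Suppose $\bm\delta\in\Theta$. For example, take $s\ge 1$ and $\bm\eta_2=\bm\eta_1\tilde{g_1}$, with $\bm\eta_1$ outside the countably many sets $\tilde G\bm\zeta$ and $\tilde G\bm\zeta\tilde{g_1}^{-1}$, $\bm\zeta\in\Theta$; this puts you in Case B. Then $\tilde g\bm\eta_2$ is not ``far from the rest of $\Theta$''. Its value $f(\tilde g\bm\eta_2)=f'_{\bm\delta}(2^{-n})$ toggles along with $f(\tilde g\bm\eta_1)=f'_{\bm\zeta_0}(2^{-n})$. Restricting $g$ to $G_{k'}$-translates does not freeze it, so no contradiction follows.

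Even when $\bm\delta\notin\Theta$, the condition $k'>k$ is not enough; you need $2^{-k'}<\operatorname{dist}(\bm\delta,\Theta)$. The case $\bm\eta_1^{-1}\bm\eta_2\in D_f$ is exactly the one the paper isolates. It invokes the criterion of \cite[Proposition 2.3]{FuKw19} and, when $\bm\zeta^{-1}\bm\eta\in D_f$, translates by $\tilde{g_s}$ so the two points are separated by membership in $D_f$.

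\textbf{Repair.} Your approach can be fixed with the one ingredient Step 4 never uses: the primes $p_{\bm\zeta}$ are distinct. Given $\bm\eta_1\neq\bm\eta_2$, choose $n>m$ as follows.
\begin{itemize}
\item If $\bm\delta\in\Theta$ (so $\bm\delta\neq\bm\zeta_0$), take $n\equiv 0\bmod p_{\bm\zeta_0}$ and $n\equiv 1\bmod p_{\bm\delta}$.
\item If $\bm\delta\notin\Theta$, take $2^{-n}<\operatorname{dist}(\bm\delta,\Theta)$ and $f'_{\bm\zeta_0}(2^{-n})\neq f(\bm\delta)$.
\end{itemize}
Then choose $g$ with $\rho(\tilde g\bm\eta_1,\bm\zeta_0)=2^{-n}$, which is possible by density of $\tilde G\bm\eta_1$. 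Neither $\tilde g\bm\eta_1$ nor $\tilde g\bm\eta_2$ lies in $\Theta$, and their singleton sections differ.

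This uniform argument also replaces your Case A. There, the claim that the positions of the two-point sections force $\bm\eta_1=\bm\eta_2$ only holds in the absence of torsion. The position set of $\tilde h\bm\zeta_i$ is $\{g_0,\dots,g_{s-i}\}h^{-1}$. So if, for instance, $g_1^2=1_G$, the distinct points $\bm\zeta_{s-1}$ and $\tilde{g_1}\bm\zeta_{s-1}$ have the same position set $\{1_G,g_1\}$.
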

\begin{proof}
 We claim that the set $D_f$ of points of discontinuity of $f$ is equal to $\Theta$.
In fact, let $\bm{\zeta}\in \Theta$ and let $(\gamma_n)_{n\geq 0}\subseteq G$ be a sequence in such a way {$\gamma_{2n}\in G_{p_{\bm\zeta}n}\setminus G_{p_{\bm\zeta}n+1}$} and $\gamma_{2n+1}\in G_{p_{\bm\zeta}n+1}\setminus G_{p_{\bm\zeta}n+2}$ for each $n\in\mathbb{N}$. 
Note that $\tilde\gamma_n\bm{\zeta}\to \bm{\zeta}$ as $n\to\infty$ since $\rho(\tilde \gamma_{2n}\bm{\zeta},\bm{\zeta})=\frac{1}{2^{p_{\bm\zeta}n}}$ and $\rho(\tilde \gamma_{2n+1}\bm\zeta,\bm\zeta)=\frac{1}{2^{p_{\bm\zeta}n+1}}$.
Moreover, note that for every $n>\frac{m}{p_{\bm\zeta}}$, we have that $f(\tilde\gamma_{2n})=0$ and $f(\tilde\gamma_{2n+1})=1$.
Taking the sequences $(\gamma_{2n+1})_{n\geq 0}$ and $(\gamma_{2n})_{n\geq 0}$ we obtain the discontinuity of $\bm{\zeta}$.
Therefore, $\Theta\subseteq D_f$. 
The other containment is a direct consequence of the ultrametric $\rho$: any triple of points forms an isosceles triangle.

To show that $f$ is invariant under no rotations we will use \cite[Proposition 2.3]{FuKw19}.
Let $\bm{\zeta},\bm{\eta}\in  G_{(G_n)}$ be such that $\bm{\zeta}\neq \bm{\eta}$.
If $\bm{\zeta}^{-1}\bm{\eta}\notin D_f$, we conclude, since $\bm{\zeta}^{-1}\bm{\zeta}\in D_f$.
Assume that $\bm{\zeta}^{-1} \bm{\eta}\in D_f$. 
Then, there exist $0\leq i\leq s $ and $0\leq t\leq s-i$ satisfying  $\bm{\zeta}^{-1} \bm{\eta}=g_t\bm{\zeta}_i$.
Observe that $\tilde{g_s}\bm{\zeta}^{-1}\bm{\zeta}=\tilde{g_s}\in D_f$.
We claim that $\tilde{g_s}\bm{\zeta}^{-1}\bm{\eta}\notin \Theta$.
Indeed, let $\tilde{g_w}\bm{\zeta}_j\in \Theta$.
\begin{itemize}
    \item If $j\neq i$, then
\begin{align*}
  \rho(\tilde{g_s}\tilde{g_t}\bm{\zeta}_i,\tilde{g_w} \bm{\zeta}_j)&\geq \rho(\tilde{g_s}\tilde{g_t}\bm{\zeta}_i, \tilde{g_s}\tilde{g_t}\bm{\zeta}_j)-\rho(\tilde{g_s}\tilde{g_t}\bm{\zeta}_j, \tilde{g_s}\tilde{g_w}\bm{\zeta}_j)-\rho(\tilde{g_s}\tilde{g_w}\bm{\zeta}_j, \tilde{g_w}\bm{\zeta}_j)\\
  &=\rho(\bm{\zeta}_i,\bm{\zeta}_j)-\rho(\tilde{g_t},\tilde{g_w})-\rho(\tilde{g_s},\tilde{1_G})\\
  &>4r'-2r'-r'=r'.
\end{align*}
\item Assume $j=i$ and $w> t$ (the case $w< t$ is analogous).
Note that
\begin{align*}
\rho(\tilde{g_s}\tilde{g_t}\bm{\zeta}_i,\tilde{g_w}\bm{\zeta}_i)=\rho(\tilde{g_s}\tilde{g_t},\tilde{g_w})&\geq \rho(\tilde{g_s}\tilde{g_t},\tilde{g_s}\tilde{g_w})- \rho(\tilde{g_s}\tilde{g_w},\tilde{g_w})\\
    &=\rho(\tilde{g_t},\tilde{g_w})-\rho(\tilde{g_s},\tilde{1_G})\\
    &\geq \rho(\tilde{g_t},\tilde{1_G})-\rho(\tilde{g_w},\tilde{1_G})-\rho(\tilde{g_s},\tilde{1_G})\\
    &>\frac{1}{2^{n_t+1}}-\frac{1}{2^{n_w}}-\frac{1}{2^{n_s}}\\
    &\geq \frac{1}{2^{n_t+1}}-\frac{1}{2^{n_s-1}}>0.
\end{align*}
\item If $j=i$ and $w=t$, then $\rho(\tilde{g_s}\tilde{g_t}\bm{\zeta}_i,\tilde{g_t}\bm{\zeta}_i)=\rho(\tilde{g_s},\tilde{1_G})>0$.

\end{itemize}
Therefore,  $\tilde{g_s}\bm{\zeta}^{-1} \bm{\eta}=\tilde{g_s} \tilde{g_t}\bm{\zeta}_i\notin \Theta$.
Consequently, $f$ is invariant under no rotations.

That $(X_f,\sigma, G)$ is a Toeplitz subshift whose maximal equicontinuous factor is equal to $G_{(G_n)}$ follows from Lemma \ref{lemma:invariantnorotation}.

 To compute the cardinality of the fiber, since it is constant along an orbit, it is enough to compute the one of $\bm\zeta_i \quad (\in \Theta)$ for $0\leq i\leq s$. 
Subsection \ref{Subsec: Semicocycles and Toeplitz} gives that $X_f=\overline{O_\alpha(x_{f,\xi})}$ for any $\bm\xi\notin D_f$.
Recall that for an element $x\in \pi^{-1}(\bm\zeta_i)$, we have that $x(g)\in F(\tilde g\bm\zeta_i)=\{k\in \{0,1\}: (\tilde g\bm\zeta_i,k)\in F\}$, where $F$ is the closure of the graph of $f$.
In particular, every $x,x'\in \pi^{-1}(\bm\zeta_i)$ satisfy  $x(g)=x'(g)$ for any $g\in G$ such that $\tilde g\bm\zeta_i\notin D_f$.
Thus, the elements in $\pi^{-1}(\bm\zeta_i)$ can only differ at  the positions $g_0, g_1,\dots, g_{s-i}$ since $g_j\bm\zeta_i\in D_f$ for $0\leq j\leq s-i$.
Conversely, denoting   $S_i=\{g_0,g_1,\dots, g_{s-i}\}$, we claim that for every $Q\in \{0,1\}^{S_i}$, there exists $y\in \pi^{-1}(\bm\zeta_i)$ such that $y(g)=Q(g)$ for every $g\in S_i$.
This  will ensure that $|\pi^{-1}(\zeta_i)|=|\{0,1\}^{S_i}|=2^{s+1-i}$ and prove the theorem. 
Indeed, let $Q\in \{0,1\}^{S_i}$.
Set $m_i$ to be the product of all the primes assigned to the elements of the form $\tilde g\bm\zeta_i$ and by $r_i$ be the product of all primes assigned to the elements of the form $\tilde g\bm\zeta_i$ such that $Q(g)=0$, $g\in S_i$.
That is,
\begin{align*}
    m_i=\prod_{\substack{g\in S_i}}p_{\tilde g\bm\zeta_i}, \mbox{ and }r_i=\prod_{\substack{g\in S_i\\Q(g)=0}}p_{\tilde g\bm\zeta_i}.
\end{align*}
The very definitions of $m_i$ and $r_i$ ensure that $r_i+m_i n = Q(g) \mod p_{\tilde g\zeta_i} $ for any $g \in S_i$.
Let $(\gamma_n)_{n\geq 0}$ be a sequence of elements in $G$ such that $\gamma_n\in G_{r_i+m_in}\setminus G_{r_i+m_in+1}$.
Let $(u_n)_{n\ge0}$, $(v_n)_{n\ge 0}$ and $(h_{n})_{n\geq 0}$ be  sequences of elements of  $G$ such that $(u_nG_n)_n=\bm\xi$, $(v_nG_n)_{n}=\bm\zeta_i$, and  $(u_{r_i+m_in+1})^{-1}h_{n}v_{r_i+m_in+1}\in G_{r_i+m_in+1}$.
We claim that the sequence $(\sigma^{\gamma_nh_n}x_{f,\bm\xi})_{n\geq 0}$ has a limit point $x$ such that $x(g)=Q(g)$ for every $g\in S_i$.
In fact,
for every $g\in S_i$, observe that $(\sigma^{\gamma_nh_n}x_{f,\bm\xi})(g)=x_{f,\bm\xi}(g\gamma_nh_n)=f(\tilde g\tilde \gamma_n\tilde h_n\bm\xi)$.
Note that $v_{r_i+m_in}^{-1}\gamma_nh_nu_{r_i+m_in}\in G_{r_i+m_in}$.
On the other hand, $v_{r_i+m_in+1}^{-1}\gamma_nh_nu_{r_i+m_in+1}\notin G_{r_i+m_in+1}$ since $\gamma_n\notin G_{r_i+m_in+1}$.
Consequently, $\rho(\tilde g\tilde \gamma_n\tilde h_n\bm\xi, \tilde g\bm\zeta_i)=\frac{1}{2^{r_i+m_in}}$.
This in turns implies that $(\sigma^{\gamma_nh_n}x_{f,\bm\xi})(g)=f(\tilde g\tilde \gamma_n\tilde h_n\bm\xi)=f'_{\tilde g\bm\zeta_i}(\tilde g\tilde \gamma_n\tilde h_n\bm\xi)=Q(g)$ for every large enough $n$ (so that $r_i+m_in\geq m$).
Hence any accumulation point $y \in X_f$ of 
$\{\sigma^{\gamma_{n}h_{n}}x_{f,\bm\xi}\}_{n\geq 0}$ satisfies  $y(g)=Q(g)$ for every $g\in S_i$. Moreover, since $\tilde{\gamma}_n \to 1_{G_{(G_n)}}$ and  $\tilde{h_n}\xi\to \zeta_i$ when $n\to\infty$, it is standard to check that $y\in \pi^{-1}(\zeta_i)$. This shows the claim.

\end{proof}

\begin{proof}[Proof of Theorem \ref{Theorem: diverse-Aut}]
By Lemma  \ref{prop:second-Toeplitz}, we have a Toeplitz subshift  $(X_f,\sigma, G)$ so that $(G_{(G_n)},\alpha, G)$ is its maximal equicontinuous factor  associated with a factor map $\pi:X_f\to G_{(G_n)}$.
   Corollary \ref{Coro:injectiveAut} implies that $\pi$ is compatible with $\AutDyn(X_f,\sigma)$ and that $\hat{\pi}:\AutDyn(X_f,\sigma)\to \AutDyn( G_{(G_n)},\alpha)$ is injective. 
   Proposition \ref{prop:right-aut} guarantees that there exists $\bm{\omega}\in G_{(G_n)}$ satisfying $\hat{\pi}(\phi)=R_{\bm{\omega}}$, where $R_{\bm{\omega}}$ is the right multiplication automorphism of $G_{(G_n)}$ defined in Proposition \ref{prop:right-aut}.
    Using Lemma \ref{lemma: fibers} and the fact that $O_\alpha(\tilde{1_G})$ contains all the elements in $G_{(G_n)}$ with a fiber of size $2^{s+1}$ (Lemma \ref{prop:second-Toeplitz}), we obtain 
    $R_{\bm{ \omega}}$ preserves the the orbit $O_\alpha(\tilde{1_G})$ and so there is  a $g \in G$ such that $\tilde{g}=\bm{\omega}$.
        
    Now, assume that $g\in A_{\bm{\zeta}_i}$, for some $1\leq i\leq s$.
    Therefore,  $R_{\tilde{g}}(\bm{\zeta}_i)=\bm{\zeta}_i\tilde{g}\notin O_\alpha(\bm{\zeta}_i)$, which is a contradiction with the last part of Lemma \ref{prop:second-Toeplitz} and Lemma \ref{lemma: fibers}, since the only elements $\bm{\zeta}\in G_{(G_n)}$ such that $|\pi^{-1}(\{\bm{\zeta}\})| = |\pi^{-1}(\{\bm{\zeta}_i\})|$ are those that belong to $O_\alpha(\bm{\zeta}_i)$.
    Consequently, $\AutDyn(X_f,G)\leqslant  \bigcap_{i=1}^s A_{\bm{\zeta}_i}^c$.

On the other hand, if $h\in \Centr_G(\Theta)$, then $\rho(\tilde h^{-1}\tilde s\bm{\xi},\bm{\zeta}) = \rho(\tilde s\bm{\xi} \tilde h^{-1},\bm{\zeta})$ for every $\bm{\zeta}\in \Theta$ and  ${\bm \xi} \in G_{(G_n)}\setminus \cup_{{\bm \zeta} \in \Theta} O_\alpha(\zeta)$.
This implies that $\sigma_{{\rm L}}^h (x_{f,\bm\xi})(s) = x_{f,\bm\xi \tilde h^{-1}}(s)$, where $\sigma_{{\rm L}}^h:\{0,1\}^G\to \{0,1\}^G$ is the left-shift by $h$ defined by $(\sigma_{{\rm L}}^hx)(g)=x(h^{-1}g)$, $g\in G$. 
Therefore, using Lemma \ref{lemma: equal-cocycles}, we have that $\sigma_{{\rm L}}^h|_{X_f}:X_f\to X_f$ is a self-homeomorphism that commutes with $\sigma^g$ for every $g\in G$.
Consequently, $\sigma_{{\rm L}}^h|_{X_f}\in \AutDyn(X_f,\sigma)$. 
Moreover the restricted left-shift is not trivial  $\sigma_{{\rm L}}^h|_{X_f} \neq {\rm Id}$ when $h\neq 1_G$. Otherwise,  since $\bm \xi =\pi(x_{f, {\bm \xi}})$ for any $\bm\xi \in G_{(G_n)}\setminus \cup_{{\bm \zeta} \in \Theta} O_\alpha(\zeta)$ (Subsection \ref{Subsec: Semicocycles and Toeplitz}), the equality  $\sigma_{{\rm L}}^h (x_{f,\bm\xi}) = x_{f,\bm\xi \tilde h^{-1}}$ implies that $\bm \xi = \bm \xi \tilde h^{-1}$  and then $h= 1_G$.
The inclusion $\Centr_G(\theta)\leqslant\AutDyn(X_f,\sigma)$ follows. 
\end{proof}

\section{Applications}\label{Sec:Applications}
In this section we present two applications of Theorem \ref{Theorem: diverse-Aut}.
We provide  explicit constructions of Toeplitz subshifts with a control on their automorphism groups. 
In the first class of examples, their automorphism group is isomorphic to the acting group, even if the group is non-abelian. It applies to linear groups such as the Heisenberg group or the projective linear group (see Examples \ref{example: Heisenberg}, \ref{ex:PSL2}).
 In contrast, their  normalizer groups are the largest possible for a fixed automorphism group. They are extensions of the group of all the isomorphisms of the group by their automorphism groups, whereas the examples have low complexity (Corollary \ref{coro: splitsequence of Xf- Aut} and Proposition \ref{prop:complexity}).  

In the second class of examples, the automorphism groups are trivial (Propositions \ref{Coro: Hei-trivial Toep} and \ref{prop:TrivialCentPSL} for the Heisenberg group and  projective linear groups respectivley). They are restricted to the center of the group. When the group has a trivial center, its automorphism group is also trivial, i.e. contains only the identity transformation. The constructions apply to amenable and non-amenable groups.

\subsection{$G$-Toeplitz subshifts with automorphism group isomorphic to $G$}\label{sec:ToeplAutoG}

\begin{corollary}\label{Coro: s=0}
Let $G$ be a countable, residually finite, infinite group and $(G_n)_{n\geq 0}$ a scale.
Then, there exists a Toeplitz subshift $(X,\sigma,G)$, whose maximal equicontinuous factor is $G_{(G_n)}$ and its automorphism group $\AutDyn(X,\sigma)\cong G$.
\end{corollary}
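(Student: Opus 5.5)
This is the case $s=0$ of Theorem~\ref{Theorem: diverse-Aut}. Take only the point $\bm\zeta_0=\tilde{1_G}$, so that no $g_j$ with $j\geq 1$ is needed and $\Theta=\{\tilde{1_G}\}$. The theorem (equivalently, the construction of Lemma~\ref{prop:second-Toeplitz} with $s=0$) produces a Toeplitz subshift $X=X_f$ whose maximal equicontinuous factor is $G_{(G_n)}$. The intersection $\bigcap_{i=1}^s A^c_{\bm\zeta_i}$ is empty-indexed and hence equals $G$. The centralizer $\Centr_G(\Theta)$ consists of the $g$ with $\tilde g\tilde{1_G}=\tilde{1_G}\tilde g$, which holds for every $g\in G$. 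The double inclusion therefore collapses to $G\leqslant \AutDyn(X,\sigma)\leqslant G$.

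The sandwich only has meaning through the identifications made in the proof of Theorem~\ref{Theorem: diverse-Aut}, so I will check them explicitly in this case.

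\textbf{Upper bound.} By Corollary~\ref{Coro:injectiveAut}, $\hat\pi$ embeds $\AutDyn(X,\sigma)$ into $\AutDyn(G_{(G_n)},\alpha)$. By Proposition~\ref{prop:right-aut}, the image consists of right multiplications $R_{\bm\omega}$. Lemma~\ref{prop:second-Toeplitz} gives fibers of size $2$ exactly over $O_\alpha(\tilde{1_G})=\tilde G$ and of size $1$ elsewhere. Lemma~\ref{lemma: fibers} then forces $R_{\bm\omega}(\tilde{1_G})=\bm\omega\in\tilde G$. Hence $\hat\pi$ injects $\AutDyn(X,\sigma)$ into $\{R_{\tilde g}: g\in G\}$, and this set is isomorphic to $G$ via $g\mapsto R_{\tilde g}$, which is an anti-isomorphism composed with inversion, hence an isomorphism.

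\textbf{Lower bound.} For each $h\in G$, the left shift $\sigma_{\rm L}^h$ maps $x_{f,\bm\xi}$ to $x_{f,\bm\xi\tilde h^{-1}}$. This uses only that $\rho$ is bi-invariant and $\Theta=\{\tilde{1_G}\}$, so that $\rho(\tilde h^{-1}\tilde s\bm\xi,\tilde{1_G})=\rho(\tilde s\bm\xi\tilde h^{-1},\tilde{1_G})$. Lemma~\ref{lemma: equal-cocycles} then gives $\sigma_{\rm L}^h(X)=X$. Since $\sigma_{\rm L}^h$ commutes with all right shifts $\sigma^g$, it is an automorphism. Moreover $\hat\pi(\sigma_{\rm L}^h)=R_{\tilde h^{-1}}$, because $\pi(x_{f,\bm\xi\tilde h^{-1}})=\bm\xi\tilde h^{-1}$. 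Thus $h\mapsto \sigma_{\rm L}^h|_X$ is an injective homomorphism $G\to\AutDyn(X,\sigma)$, using $\sigma_{\rm L}^{h}\sigma_{\rm L}^{h'}=\sigma_{\rm L}^{hh'}$. Composing with $\hat\pi$ shows that every $R_{\tilde g}$ is attained.

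Combining the two bounds, $\hat\pi$ is a bijection of $\AutDyn(X,\sigma)$ onto $\{R_{\tilde g}\}$, so $\AutDyn(X,\sigma)\cong G$.

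The only delicate point is bookkeeping. One must confirm that the $s=0$ construction is legitimate: the hypotheses on $n_t,g_t$ are vacuous, and $m$ must be chosen as any fixed integer, since $\Theta$ has no pair of distinct points. One must also check that left and right conventions make the two inclusions refer to the same copy of $G$. Beyond that, the corollary follows directly from Theorem~\ref{Theorem: diverse-Aut}.
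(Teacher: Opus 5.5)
Your proposal is correct and follows the paper's proof, which simply applies Theorem~\ref{Theorem: diverse-Aut} with $s=0$, observing that $\Centr_G(\Theta)=G$ and that the empty intersection $\bigcap_{i\in\emptyset}A^c_{\bm\zeta_i}$ equals $G$. Your explicit upper/lower-bound check just re-runs the theorem's proof for $s=0$, and it usefully confirms that both inclusions refer to the same copy of $G$ and that the degenerate choice of $m$ is harmless. One small wording fix: $g\mapsto R_{\tilde g}$ is itself an anti-isomorphism, so the actual isomorphism $G\to\{R_{\tilde g}\}$ is $g\mapsto R_{\tilde g^{-1}}=\hat\pi(\sigma_{\rm L}^g)$.
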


\begin{proof}
    Apply Theorem \ref{Theorem: diverse-Aut} with $s=0$. In this case, note that $C_G(\Theta)=G$ and as by standard convention we have that $\bigcap_{i\in \emptyset} A_{\zeta_i}^c=G$.
\end{proof}

From now on in this subsection, we denote $(X_f,\sigma_{X_f},G)$  the Toeplitz subshift given by the previous Corollary. We will show it has linear symbolic complexity and we will describe its normalizer according to the scale $(G_n)_{n}$.

\textbf{The subshift $(X_f,\sigma_{X_f},G)$  has low complexity}.
For a subshift $X\subseteq \Sigma^G$  and  $F\subseteq G$ a finite subset, let  us define $L_F(y)$ for $y \in X$ to be the set of colorations of $F$ within $y$:
\begin{align*}
    L_F(y)=\{\sigma^gy|_F\in \Sigma^F\colon g\in G\}.
\end{align*}
Let us denote by $L_F(X)=\bigcup_{y\in Y}L_F(y)$.
When the subshift $X$ is minimal, we have that $L_F(X)=L_F(y)$ for any $y\in X$.

For the subshift $(X_f,\sigma_{X_f},G)$ previously given, let $x\in \pi^{-1}(\tilde{1_G})$ be an element in the fiber of $\tilde{1_G}$ of the factor map $\pi:X_f\to G_{(G_n)}$.
Observe that $x(g)=f(\tilde{g})$ for every $g\neq 1_G$.
Let $(F_n)_{n\geq 0}$ be a sequence of finite sets of $G$ such that every $F_n$ is a fundamental domain of $G_n$, i.e., for every $n\geq 0$, contains exactly one representative of every coset in $G/G_n$.
\begin{lemma}
For every $g\in G$, we have that $x(tg)=x(td)$ for any $t\in F_n\setminus G_nd^{-1}$, where $d\in F_n$ is such that $gG_n=dG_n$.
\end{lemma}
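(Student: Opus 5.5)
We are in the case $s=0$, so $\Theta=\{\tilde{1_G}\}$ and the construction reduces to a single prime $p=p_{\tilde{1_G}}$ and a single function $f'=f'_{\tilde{1_G}}$. For $\bm\omega\neq\tilde{1_G}$ we therefore have $f(\bm\omega)=f'(\rho(\bm\omega,\tilde{1_G}))$, and the value depends only on the level $k=\sup\{i:\omega_iG_i=G_i\}$ at which $\bm\omega$ leaves the cylinders $C_i$. Since $x(h)=f(\tilde h)$ for every $h\neq 1_G$, the value $x(h)$ depends only on the largest $k$ with $h\in G_k$. The plan is to show that $tg$ and $td$ have the same such level whenever $t\in F_n\setminus G_nd^{-1}$.

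First I check that neither $tg$ nor $td$ equals $1_G$, so that the formula $x(h)=f(\tilde h)$ applies to both. The condition $t\notin G_nd^{-1}$ means $td\notin G_n$. Because $gG_n=dG_n$, write $g=d\gamma$ with $\gamma\in G_n$. Normality of $G_n$ then gives $tgG_n=td\gamma G_n=tdG_n\neq G_n$. Hence both $tg$ and $td$ lie outside $G_n$, and in particular neither is the identity.

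Next I compute the levels. Let $k(h)=\max\{i\ge 0: h\in G_i\}$, which is finite for $h\neq 1_G$ since $\bigcap_n G_n=\{1_G\}$; if $h\notin G_0$ the value is determined by the convention for $\rho$. By the ultrametric definition \eqref{eq: metric-odometer}, $\rho(\tilde h,\tilde{1_G})=2^{-k(h)}$. Because the $G_i$ are nested, the statement $h\in G_i$ for $i\le n$ depends only on the coset $hG_n$: indeed $hG_n\subseteq G_i$ iff $h\in G_i$, as $G_n\le G_i$. Since $tgG_n=tdG_n\neq G_n$, both elements lie in exactly the same $G_i$ for $i\le n$ and neither lies in $G_n$. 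Therefore $k(tg)=k(td)<n$.

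Finally, $x(tg)=f'(2^{-k(tg)})=f'(2^{-k(td)})=x(td)$. The main point to be careful about is the identification $\rho(\tilde h,\tilde{1_G})=2^{-k(h)}$, including the edge case $h\notin G_0$. In that case both elements sit at the same level anyway, because the argument above only uses the coset modulo $G_n$, so this edge case causes no trouble. Note also that normality is used exactly once, to pass from $g=d\gamma$ to $tgG_n=tdG_n$.
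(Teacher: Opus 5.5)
Your proof is correct and follows the paper's argument: write $g=d\gamma$ with $\gamma\in G_n$, note that $td\notin G_n$, and observe that $tg$ and $td$ lie in exactly the same subgroups $G_k$ for $k\le n$. Hence they have the same level, and so the same value of $x$; your handling of the edge case $h\notin G_0$ is slightly more careful than the paper's. One small remark: normality is not actually needed where you (and the paper) invoke it, since $td\gamma G_n=tdG_n$ holds for any subgroup $G_n$ containing $\gamma$.
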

\begin{proof}
Let $g\in G$.
Consider $d\in F_n$ and $\gamma\in G_n$ such that $g=d\gamma$.
Now, set $t\in F_n\setminus G_nd^{-1}$.
Observe that $td\in G_k\setminus G_{k+1}$ for some $0\leq k\leq n-1$.
This follows from $G=\bigcup_{k\geq 0}G_k\setminus G_{k+1}$  and $td\notin G_n$. 
Moreover, note that for every $0\leq k \leq n-1$, as the subgroups $G_i$'s are normal, 
\begin{align*}
    tg\in G_k\setminus G_{k+1}\iff td\in G_k\setminus G_{k+1}.
\end{align*}
This implies that $x(tg)=x(td)$ for every $t\in F_n\setminus G_nd^{-1}$ as desired.
\end{proof}
\begin{proposition}\label{prop:complexity}
    For every $n\geq 0$ it holds 
    \begin{align*}
         |L_{F_n}(X_f)|\leq 2|F_n|.
    \end{align*}
\end{proposition}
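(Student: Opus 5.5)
Since $X_f$ is minimal, $L_{F_n}(X_f)=L_{F_n}(x)$ for the fixed point $x\in\pi^{-1}(\tilde{1_G})$. So it suffices to show that the patterns $\sigma^g x|_{F_n}$, $g\in G$, take at most $2|F_n|$ values. The plan is to group these patterns according to the coset $gG_n$ and then use the preceding lemma.

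\textbf{Step 1 (grouping by cosets).} Fix $g\in G$ and let $d\in F_n$ be the representative with $gG_n=dG_n$. The pattern $\sigma^gx|_{F_n}$ is the map $t\mapsto x(tg)$ on $F_n$. By the preceding lemma, $x(tg)=x(td)$ for every $t\in F_n\setminus G_nd^{-1}$. Because $F_n$ is a fundamental domain and $G_nd^{-1}=d^{-1}G_n$ by normality, the set $F_n\cap G_nd^{-1}$ consists of exactly one element $t_d$. Hence $\sigma^gx|_{F_n}$ coincides with the fixed pattern $\sigma^dx|_{F_n}$ at every position except possibly $t_d$.

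\textbf{Step 2 (counting).} For each $d\in F_n$, the pattern $\sigma^gx|_{F_n}$ with $gG_n=dG_n$ is therefore determined by the single value $x(t_dg)\in\{0,1\}$. So each coset contributes at most $2$ patterns. Summing over the $|F_n|$ choices of $d$ gives $|L_{F_n}(x)|\le 2|F_n|$.

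\textbf{Main point to check.} The only delicate step is verifying that $F_n\cap G_nd^{-1}$ is a singleton, which is exactly where normality of $G_n$ and the fundamental-domain property enter. Note also that the position $t_d$ is the one where $t_dg\in G_n$. There the value $x(t_dg)$ depends on the deeper level of $t_dg$ (and equals $x(1_G)$ if $t_dg=1_G$), so it genuinely can vary; this is why the bound carries the factor $2$. No further control is needed, because the alphabet is $\{0,1\}$.
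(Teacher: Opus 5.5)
Your proposal is correct and follows essentially the same approach as the paper's proof. Both reduce to the single point $x\in\pi^{-1}(\tilde{1_G})$ by minimality, use the preceding lemma to see that for $g\in dG_n$ the pattern $\sigma^g x|_{F_n}$ agrees with $\sigma^d x|_{F_n}$ off the unique position in $F_n\cap G_nd^{-1}$, and count at most two patterns per coset; the paper states the singleton fact as $|F_n\cap G_nd^{-1}|=|F_nd\cap G_n|=1$, which is what you justify via normality.
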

\begin{proof}
    Let $n\geq 0$.
 As mentioned previously,     $L_{F_n}(X)=L_{F_n}(x)$ since $X_f$ is minimal. 
    The previous lemma guaranties that for every $g\in G$,  $\sigma^g(x)|_{F_{n}\setminus G_nd^{-1}}=\sigma^d(x)|_{F_n\setminus G_nd^{-1}}$, where $d\in F_n$ is such that $gG_n=dG_n$.
    Therefore, for every $d\in F_n$ we have at most two different patterns in the collection $\{\sigma^{d\gamma}(x)|_{F_n}\colon \gamma\in G_n\}$, since $|F_n\cap G_nd^{-1}|=|F_nd\cap G_n|=1$.
    Thus, we conclude the proof.
\end{proof}

\textbf{Description of the normalizer of  $(X_f,\sigma_{X_f},G)$}.
Observe  that  for any $\psi\in \GLN(G,\alpha)$, Proposition \ref{lemma: if-only-Norm} guarantees that $(\psi(G_n))_{n\geq 0}$ is a scale associated with $ G_{(G_n)}$.
We can then  define a metric $\rho_\psi$ over $ G_{(G_n)}$, that generates the topology of $ G_{(G_n)}$, in the following way
\begin{align*}
    \rho_\psi(({\zeta}_n\psi(G_n))_{n\geq 0},(\zeta_n\psi(G_n))_{n\geq 0})=2^{-k}, \mbox{ where }k=\sup\{i\geq 0: \eta_i\psi(G_i)=\zeta_i\psi(G_i)\}.
\end{align*}

Using this metric, we define the semicocycle $f_\psi: G_{(G_n)}\to \{0,1\}$ as follows: For $\bm{\omega}\in  G_{(G_n)}$,
\begin{align}\label{eq: good-f-hat}
    f_\psi(\bm{\omega})=\begin{cases}
        1,&\mbox{ if }\rho_\psi(\bm{\omega},\tilde{1_G})=2^{-i}, \mbox{ with }i\equiv 0 \bmod{2}\\
        0,&\mbox{otherwise.}
    \end{cases}
\end{align}
Notice that $f_{{\rm Id}_G}$ is the semicocycle $f$ given in \eqref{eq:trivial-aut} with the hypothesis $s=0$ (as in  Corollary \ref{Coro: s=0}), $\bm\zeta_0 =\tilde 1_G$  and $p_{\bm\zeta_0}=2$. In this case, we denote $f_{{\rm Id}_G}$ just as $f$.

For $\psi\in \GLN( G,\alpha)$ recall the map $B_\psi\in \No( G_{(G_n)},\alpha, \psi)$ given in Remark \ref{rem:desc-B_psi} and defined as $B_\psi((\xi_nG_n)_{n\geq 0}) = (\psi(\xi_{n})\psi(G_n))_{n\geq 0}$.
In the following proposition we will use the metric $d_D$ in $\Sigma^G$ given as follows: Let $D=(D_n)_{n\geq 0}$ be an increasing sequence of finite sets in $G$ such that $\bigcup_{i\geq 0} D_i=G$.
For $x,y\in \Sigma^G$, $d_D(x,y)=2^{-m}$, where $m=\max\{k\in \mathbb{N}: x|_{D_k}=y|_{D_k}\}$.

\begin{proposition}\label{prop:norm-in-Toep-1.0}
   Let $\psi\in \GLN( G,\alpha)$.
   For each $\bm{\xi}\in  G_{(G_n)}\setminus \tilde{G}$ such that $B_\psi(\bm{\xi})\in  G_{(G_n)}\setminus \tilde{G}$, it holds that the map
    \begin{align*}
     \phi_\psi:X_{f,\bm{\xi}}\to X_{{f_{\psi}},  B_\psi(\bm{\xi})}, \ \textrm{ s.t. }\  
     \phi_\psi(\sigma^g(x_{f,\bm{\xi}}))=\sigma^{\psi(g)}(x_{{f_\psi},B_\psi(\bm{\xi})})\quad \forall  g\in G,  
    \end{align*}
is a $\psi$-conjugacy.
\end{proposition}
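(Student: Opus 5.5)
The plan is to show that the two points $x_{f,\bm\xi}$ and $x_{f_\psi,B_\psi(\bm\xi)}$ are related by a pointwise identity, namely
\[
x_{f_\psi,B_\psi(\bm\xi)}(\psi(g)) = x_{f,\bm\xi}(g)\quad\text{for all } g\in G .
\]
Equivalently, $x_{f_\psi,B_\psi(\bm\xi)} = x_{f,\bm\xi}\circ\psi^{-1}$. Once this holds, the map $\Psi\colon\{0,1\}^G\to\{0,1\}^G$, $\Psi(y)=y\circ\psi^{-1}$, will do all the work. It is a homeomorphism of the full shift, since $\psi$ is a bijection of $G$ and preimages of cylinders are cylinders. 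It also intertwines the shifts:
\[
\Psi(\sigma^g y)(h)=y(\psi^{-1}(h)g)=y(\psi^{-1}(h\psi(g)))=\sigma^{\psi(g)}\Psi(y)(h).
\]
Hence $\Psi$ maps the orbit closure $X_{f,\bm\xi}$ homeomorphically onto the orbit closure of $\Psi(x_{f,\bm\xi})=x_{f_\psi,B_\psi(\bm\xi)}$, which is $X_{f_\psi,B_\psi(\bm\xi)}$. On the orbit of $x_{f,\bm\xi}$ it agrees with the prescribed $\phi_\psi$. So $\phi_\psi$ is well defined, extends continuously, and is a $\psi$-conjugacy. Well-definedness would otherwise be a concern if $x_{f,\bm\xi}$ had nontrivial stabilizer, but defining $\phi_\psi$ as the restriction of $\Psi$ removes that issue.

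For the pointwise identity, I unfold the definitions. By definition $x_{f,\bm\xi}(g)=f(\tilde g\bm\xi)$ and $x_{f_\psi,B_\psi(\bm\xi)}(\psi(g))=f_\psi(\widetilde{\psi(g)}B_\psi(\bm\xi))$. Since $B_\psi\in\No(G_{(G_n)},\alpha,\psi)$, we have $\widetilde{\psi(g)}B_\psi(\bm\xi)=B_\psi(\tilde g\bm\xi)$. So it suffices to prove $f_\psi\circ B_\psi=f$ on $O_\alpha(\bm\xi)$. Here $f$ is the $s=0$, $p=2$ semicocycle, which depends only on $\rho(\bm\omega,\tilde 1_G)$ (with a finite exceptional range $n\le m$, which I will match by the same convention in $f_\psi$). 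It is therefore enough to show
\[
\rho_\psi(B_\psi(\bm\omega),\tilde 1_G)=\rho(\bm\omega,\tilde 1_G).
\]
Using the description $B_\psi((\xi_nG_n)_n)=(\psi(\xi_n)\psi(G_n))_n$, written with respect to the scale $(\psi(G_n))_n$, we get $\psi(\xi_n)\in\psi(G_n)$ if and only if $\xi_n\in G_n$. This gives the equality of levels, and hence $f_\psi(B_\psi(\bm\omega))=f(\bm\omega)$.

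Two points need care, and the second is the one I expect to be the main obstacle. First, $\bm\xi\notin\tilde G$ and $B_\psi(\bm\xi)\notin\tilde G$ guarantee that the orbits avoid the discontinuity points $D_f=\{\tilde 1_G\}$ and $D_{f_\psi}=\{\tilde 1_G\}$. So both are genuine semicocycles over these pointed systems, and the value $f(\tilde1_G)=0$ never enters. Second, the identification of $G_{(G_n)}$ with $G_{(\psi(G_n))}$ through the cofinality from Proposition \ref{lemma: if-only-Norm} must be made consistently. Concretely, $B_\psi$ should be viewed as the map $(\xi_nG_n)\mapsto(\psi(\xi_n)\psi(G_n))$ into the odometer with scale $(\psi(G_n))$, and $\rho_\psi$ should be computed in that coordinate system. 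I will check carefully that the subsequence issue in Remark \ref{rem:desc-B_psi} does not shift levels, so that the levels match exactly rather than up to reindexing; with exact matching the parity condition in \eqref{eq: good-f-hat} is preserved.
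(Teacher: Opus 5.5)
Your proposal is correct, but it takes a different and cleaner route than the paper's. The paper defines $\phi_\psi$ only on $O_\sigma(x_{f,\bm\xi})$. It proves uniform continuity by checking that if $\sigma^g(x_{f,\bm\xi})$ and $\sigma^h(x_{f,\bm\xi})$ agree on $D_{r_n}\supseteq\psi^{-1}(D_n)$, then their images agree on $D_n$; this uses the same level-preservation fact $\psi(\xi_k)\in\psi(G_k)\iff\xi_k\in G_k$ that you use. It then extends the map to the closure and builds $\phi_{\psi^{-1}}$ the same way to get an inverse. You instead extract the exact identity $x_{f_\psi,B_\psi(\bm\xi)}=x_{f,\bm\xi}\circ\psi^{-1}$ and let the relabeling homeomorphism $y\mapsto y\circ\psi^{-1}$ of $\{0,1\}^G$ do the rest. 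Well-definedness, continuity, invertibility and $\psi$-equivariance then come for free. You also learn that $\phi_\psi$ is the restriction of a coordinate relabeling of the full shift. The paper's choice of $r_n$ is in effect a hand-made proof that this relabeling is continuous. So both proofs rest on the same key fact, but yours isolates it as a single identity.

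The step you flag as the main obstacle is harmless. Characterize $B_\psi$ as the continuous extension of $\tilde g\mapsto\widetilde{\psi(g)}$. It is a homeomorphism mapping $\widetilde{G_i}$ onto $\widetilde{\psi(G_i)}$, so it maps the closure of the first, $\{\bm\omega:\rho(\bm\omega,\tilde{1_G})\le 2^{-i}\}$, onto the closure of the second, $\{\bm\omega:\rho_\psi(\bm\omega,\tilde{1_G})\le 2^{-i}\}$. Hence $\rho_\psi(B_\psi(\bm\omega),\tilde{1_G})=\rho(\bm\omega,\tilde{1_G})$ for every $\bm\omega$, with no reindexing.

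As for conventions, the paper sets $f:=f_{\mathrm{Id}_G}$ in this subsection, using \eqref{eq: good-f-hat}. With that reading, $f_\psi\circ B_\psi=f$ holds on all of $G_{(G_n)}$. Note that \eqref{eq:trivial-aut} with $p_{\bm\zeta_0}=2$, taken literally, has the opposite parity as well as the exceptional range. So you are right that the two definitions have to be aligned, and the paper's proof implicitly uses the \eqref{eq: good-f-hat} version.
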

\begin{proof}
Recall that  for every $\bm{\xi} = (\xi_n G_n)_n\in  G_{(G_n)}\setminus \tilde{G}$, the element $x_{f,\bm{\xi}}$ is given by $x_{f,\bm{\xi}}(s)=f(\tilde s\bm{\xi})$, $s\in G$.
 Define  $\tilde{\phi_\psi}:O_\sigma(x_{f,\bm{\xi}})\to X_{f_\psi,B_\psi(\bm{\xi})}$ by $\phi_\psi(\sigma^g(x_{f,\bm{\xi}}))=\sigma^{\psi(g)}(x_{f_\psi,B_{\psi}(\bm{\xi})})$, for $g\in G$. 
    We claim that $\tilde\phi_\psi$ is uniformly continuous.
    Indeed, let $n\in \mathbb{N}$ and choose $r_n\in \mathbb{N}$ in such a way $\psi^{-1}(D_{n})\subseteq D_{r_n}$.
    Let $g,h\in G$ and assume that $d_D(\sigma^g(x_{f,\bm{\xi}}), \sigma^h(x_{f,\bm{\xi}}))<\frac{1}{2^{n_r}}$.
    This implies that $f(\tilde s\tilde g\bm{\xi})=x_{f,\bm{\xi}}(sg)=x_{f,\bm{\xi}}(sh)=f(\tilde s\tilde h\bm{\xi})$ for every $s\in D_{n_r}$.
    Therefore, the definition of $f$ guarantees that for every $s\in D_{n_r}$,
    \begin{align*}
\exists k\in \mathbb{N}, \rho(\tilde s\tilde g\bm{\xi},\tilde{1_G})=\frac{1}{2^{2k}}\iff \exists l\in \mathbb{N}, \rho(\tilde s\tilde h\bm{\xi},\tilde{1_G})=\frac{1}{2^{2l}}. 
    \end{align*}
    Hence the two following statements are equivalent: for every $s\in D_{n_r}$,
    \begin{align*}
        \exists k\in\mathbb{N},&\;  s g\xi_{2k}\in G_{2k}, \textrm{ and } 
        s g\xi_{2k+1}\notin G_{2k+1},
   \end{align*}
        \textrm{if and only if }        %
    \begin{align*}
        \exists l\in\mathbb{N},\; & s  h\xi_{2l}\in G_{2l}, \textrm{ and }  s h\xi_{2l+1}\notin G_{2l+1}.
    \end{align*}
    Using that $\psi^{-1}(D_n)\subseteq D_{r_n}$ we have  for every $s\in D_n$,
    \begin{align*}
        \exists k\in\mathbb{N},\; s\psi(g)\psi(\xi_{2k})\in \psi(G_{2k}) &\textrm{ and }s\psi(g)\psi(\xi_{2k+1})\notin \psi(G_{2k+1})\\
        &\iff \\
        \exists l\in\mathbb{N},\; s\psi(h)\psi(\xi_{2l})\in \psi(G_{2l}) &\textrm{ and }
        s\psi(h)\psi(\xi_{2l+1})\notin \psi(G_{2l+1}).
    \end{align*}
    The previous condition is equivalent to say that for every $s\in D_n$, ${f_\psi}(\tilde s\widetilde{\psi(g)}B_\psi(\bm{\xi}))={f_\psi}(\tilde s\widetilde{\psi(h)}B_\psi(\bm{\xi}))$.
    Consequently, $d_D(\sigma^{\psi(g)}(x_{{f_\psi},B_\psi(\bm{\xi})}), \sigma^{\psi(h)}(x_{f_\psi,B_\psi(\bm{\xi})}))<\frac{1}{2^n}$.
    Hence, $\tilde{\phi_\psi}$ is uniformly continuous and the extension $\phi_\psi: X_{f,\bm{\xi}}\to X_{f_\psi,B_\psi(\bm{\xi})}$ is continuous.
    In an analogous way, we can define $\phi_{\psi^{-1}}:X_{f_\psi, B_\psi(\bm{\xi})}\to X_{f,\bm{\xi}}$ such that $\phi_{\psi^{-1}}(\sigma^g (x_{f_\psi,B_\psi(\bm{\xi})}))=\sigma^{\psi^{-1}(g)}x_{f,\bm{\xi}}$ for every $g\in G$. 
    Moreover, we have that $\phi_\psi\circ \phi_{\psi^{-1}}$ and $\phi_{\psi^{-1}}\circ \phi_\psi$ are the identity maps when restricted on $O_\sigma(x_{f,\bm{\xi}})$ and $O_\sigma(x_{f_\psi,B_\psi(\bm{\xi})})$, respectively. 
    Hence, $\phi_\psi:X_{f,\bm{\xi}}\to X_{f_\psi,B_\psi(\bm{\xi})}$ is a homeomorphism and by construction,  $\phi_\psi$ is a $\psi$-conjugacy. 
\end{proof}

\begin{corollary}\label{coro: belong-to-N} Let $\psi\in \GLN(G,\alpha)$ be such that $\psi(G_n) = G_n$ for each $n\in \mathbb{N}$.
Let $\bm{\xi}\in  G_{(G_n)}$ be as in Proposition \ref{prop:norm-in-Toep-1.0}.
Then, $X_{f,\bm{\xi}} = X_{f_\psi,B_\psi(\bm{\xi})}$.
Moreover, the map $\phi_\psi$ defined in Proposition \ref{prop:norm-in-Toep-1.0} belongs to $\No(X_f,\sigma,\psi)$.
\end{corollary}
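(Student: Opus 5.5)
The plan is to show first that when $\psi(G_n)=G_n$ for every $n$, the two semicocycles coincide and the map $B_\psi$ is simple. Then the equality of subshifts follows from Lemma \ref{lemma: equal-cocycles}, and membership in the normalizer follows from Proposition \ref{prop:norm-in-Toep-1.0}.

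\textbf{Step 1: $\rho_\psi=\rho$ and $f_\psi=f$.} Since $\psi(G_n)=G_n$, the scales $(\psi(G_n))_n$ and $(G_n)_n$ are identical. So the metric $\rho_\psi$ is literally $\rho$, and by \eqref{eq: good-f-hat} we get $f_\psi=f_{{\rm Id}_G}=f$.

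\textbf{Step 2: $B_\psi$ fixes no extra orbits.} By Remark \ref{rem:desc-B_psi}, in this case
\[
B_\psi((\xi_nG_n)_n)=(\psi(\xi_n)G_n)_n .
\]
This is a group automorphism of $G_{(G_n)}$ with $B_\psi(\tilde g)=\widetilde{\psi(g)}$, so it maps $\tilde G$ onto $\tilde G$. The hypothesis that $B_\psi(\bm\xi)\notin\tilde G$ is therefore automatic once $\bm\xi\notin\tilde G$.

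\textbf{Step 3: equality of subshifts.} Here $D_f=\Theta=\{\tilde{1_G}\}$, so $f$ is invariant under no rotation by Lemma \ref{prop:second-Toeplitz}. We have $O_\alpha(B_\psi(\bm\xi))\cap D_f=\emptyset$, since otherwise $B_\psi(\bm\xi)\in\tilde G$. Likewise $O_\alpha(\bm\xi)\cap D_f=\emptyset$. Lemma \ref{lemma: equal-cocycles} then gives
\[
X_{f,\bm\xi}=X_f=X_{f,B_\psi(\bm\xi)}=X_{f_\psi,B_\psi(\bm\xi)}.
\]

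\textbf{Step 4: the normalizer.} Proposition \ref{prop:norm-in-Toep-1.0} yields a homeomorphism $\phi_\psi\colon X_f\to X_f$ with $\phi_\psi\circ\sigma^g=\sigma^{\psi(g)}\circ\phi_\psi$. This identity holds on the dense orbit of $x_{f,\bm\xi}$ and extends by continuity. Hence $\phi_\psi\in\No(X_f,\sigma,\psi)$.

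The only delicate point is the bookkeeping in Step 1. One must check that $f_\psi$ is defined through the same metric convention, with the parity of the level coming from the same indexing. This is immediate once $\psi(G_n)=G_n$ for all $n$, including $n=0$.
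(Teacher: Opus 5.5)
Your proposal is correct and takes essentially the same route as the paper. You identify $\rho_\psi=\rho$, so that $f_\psi=f$; you apply Lemma \ref{lemma: equal-cocycles} to obtain $X_{f,\bm\xi}=X_{f,B_\psi(\bm\xi)}$; and you read off $\phi_\psi\in\No(X_f,\sigma,\psi)$ from Proposition \ref{prop:norm-in-Toep-1.0}. Your Step 2 is a harmless extra: it shows that $B_\psi(\bm\xi)\notin\tilde G$ holds automatically, since $B_\psi$ restricts to a bijection of $\tilde G$, whereas the paper simply takes this as part of the hypothesis.
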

\begin{proof}
 First, observe that $\rho = \rho_\psi$ since $\psi(G_n)=G_n$ for each $n\geq 0$, which, in turn, implies that $f=f_\psi$, since the definition of $f_\psi$ is the same as the one for $f$ but using $\rho_\psi$ instead of $\rho$ and in this particular case, $\rho=\rho_\psi$.
 Using Lemma \ref{lemma: equal-cocycles}, and the fact that $B_\psi(\bm{\xi})\in  G_{(G_n)}\setminus \tilde{G}$, we obtain that $X_{f,\bm{\xi}}=X_{f,B_\psi(\bm{\xi})}$.
 The second part of the corollary follows directly by Proposition \ref{prop:norm-in-Toep-1.0}, since $\phi_\psi$ is a $\psi$-conjugacy. 
\end{proof}
\begin{corollary}\label{coro: splitsequence of Xf- Aut}
    Let $G$ be a countable residually finite group and $(G_n)_{n\geq 0}$ a scale.
    Let $ (G_{(G_n)},\alpha, G)$ be the odometer associated with $(G_n)_{n\in \mathbb{N}}$ and  let $(X_f, \sigma_{X_f}, G)$ b the above subshift.
   Assume that  for each $\psi\in \GLN(G,\alpha)$ it holds that $\psi(G_n) = G_n$ for each $n\in \mathbb{N}$. Then   we have $\GLN(G,\alpha)=\GLN(G,\sigma_{X_f})$ and the short exact sequence 
    \begin{align}\label{eq: short-exact-sequence-Xf}
        1\to G\to \No(X_f,\sigma_{X_f})\to \GLN(G,\alpha)\to 1
    \end{align}
    splits.
\end{corollary}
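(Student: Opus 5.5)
The plan is to read off the short exact sequence \eqref{eq: shortexactsequence} for $(X_f,\sigma_{X_f},G)$, identify its kernel with $G$ by Corollary \ref{Coro: s=0}, and construct an explicit splitting from the maps $\phi_\psi$ of Corollary \ref{coro: belong-to-N}.

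\textbf{Step 1: $\GLN(G,\sigma_{X_f})=\GLN(G,\alpha)$.} The inclusion $\GLN(G,\sigma_{X_f})\subseteq\GLN(G,\alpha)$ comes from Proposition \ref{Prop:MEF-implies-compatible}(2). Indeed, $\pi$ is the factor map onto the maximal equicontinuous factor, and any $\phi\in\No(X_f,\sigma,\psi)$ yields $\hat\pi(\phi)\in\No(G_{(G_n)},\alpha,\psi)$, so $\psi\in\GLN(G,\alpha)$. For the converse, take $\psi\in\GLN(G,\alpha)$; by hypothesis $\psi(G_n)=G_n$ for all $n$. I then need a point $\bm\xi\notin\tilde G$ with $B_\psi(\bm\xi)\notin\tilde G$. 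Any $\bm\xi\notin\tilde G$ works, because $B_\psi$ is a bijection mapping $\tilde G$ onto $\tilde G$ (it sends $\tilde g$ to $\widetilde{\psi(g)}$, and $B_\psi^{-1}=B_{\psi^{-1}}$ does the same). Corollary \ref{coro: belong-to-N} then gives $\phi_\psi\in\No(X_f,\sigma,\psi)$, so $\psi\in\GLN(G,\sigma_{X_f})$.

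\textbf{Step 2: the kernel is $G$.} By \eqref{eq: shortexactsequence}, $\ker j=\AutDyn(X_f,\sigma)$, and Corollary \ref{Coro: s=0} identifies this with $G$. It acts through the left shifts $\sigma_{\rm L}^h$, as in the proof of Theorem \ref{Theorem: diverse-Aut} with $\Theta=\{\tilde{1_G}\}$. This gives the exact sequence \eqref{eq: short-exact-sequence-Xf}.

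\textbf{Step 3: the splitting.} Fix once and for all a point $\bm\xi\in G_{(G_n)}\setminus\tilde G$, so that $X_f=X_{f,\bm\xi}$, and define $\lambda(\psi)=\phi_\psi$. Each $\phi_\psi$ is determined on the dense orbit by
\[
\phi_\psi(\sigma^g x_{f,\bm\xi})=\sigma^{\psi(g)}x_{f,B_\psi(\bm\xi)}.
\]
To compose, I first rewrite $\phi_{\psi_2}$ with base point $B_{\psi_2}(\bm\xi)$. Since $f_\psi=f$ and $B_{\psi_2}(\bm\xi)\notin\tilde G$, the map $\phi_{\psi_1}$ can be taken with base point $B_{\psi_2}(\bm\xi)$. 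Then
\[
\phi_{\psi_1}\circ\phi_{\psi_2}(\sigma^g x_{f,\bm\xi})=\sigma^{\psi_1\psi_2(g)}x_{f,B_{\psi_1}B_{\psi_2}(\bm\xi)}=\phi_{\psi_1\psi_2}(\sigma^g x_{f,\bm\xi}),
\]
using $B_{\psi_1}B_{\psi_2}=B_{\psi_1\psi_2}$ from Proposition \ref{coro:splitsequence}. Density of the orbit and continuity then give $\lambda(\psi_1)\lambda(\psi_2)=\lambda(\psi_1\psi_2)$. Finally, $j\circ\lambda=\mathrm{Id}$ holds by construction.

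\textbf{Main obstacle.} The delicate point is that $\phi_\psi$ depends on the base point: an element of $X_f$ may equal $x_{f,\bm\eta}$ for several representations, and the definition of $\phi_{\psi_1}$ must be consistent when its base point changes from $\bm\xi$ to $B_{\psi_2}(\bm\xi)$. I would settle this by proving a \emph{canonical formula} valid for every $\bm\eta\notin\tilde G$:
\[
\phi_\psi(x_{f,\bm\eta})=x_{f,B_\psi(\bm\eta)}.
\]
To prove it, note that $x_{f,\bm\eta}$ is the unique preimage of $\bm\eta$ under $\pi$, since the fiber has one point off $\tilde G$. Moreover, $\hat\pi(\phi_\psi)$ and $B_\psi$ agree on the orbit of $\bm\xi$, hence everywhere by continuity. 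With this formula, $\phi_\psi$ is independent of the base point, and the homomorphism property follows immediately.
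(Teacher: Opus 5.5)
Your proposal is correct and follows essentially the paper's route. Both arguments rest on the computation $\hat\pi(\phi_\psi)=B_\psi$ together with the splitting $\psi\mapsto B_\psi$ of Proposition \ref{coro:splitsequence}. The paper takes the section $\hat\pi^{-1}\circ B$ via injectivity of $\hat\pi$; your canonical formula $\phi_\psi(x_{f,\bm\eta})=x_{f,B_\psi(\bm\eta)}$, derived from the singleton fibers off $\tilde G$, is an explicit form of that same injectivity. It also makes explicit two points the paper leaves implicit: the base-point independence of $\phi_\psi$, and the inclusion $\GLN(G,\sigma_{X_f})\subseteq\GLN(G,\alpha)$.
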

\begin{proof}
The first part follows directly by Corollary \ref{coro: belong-to-N}.
For the second part, 
    consider the following diagram of exact short sequences, given both by \eqref{eq: shortexactsequence}, with $\AutDyn(X_f,\sigma_{X_f})\cong G$ by Corollary \ref{Coro: s=0}, and $\AutDyn(G_{(G_n)},\beta)\cong G_{(G_n)}$ by Proposition \ref{prop:right-aut},
    \begin{align*}
        \xymatrix{1\ar[r]&G\ar[d]\ar[r]&\No(X_f,\sigma_{X_f})\ar[d]^{\hat{\pi}}\ar[r]&\ar@{=}[d]\GL(G,\sigma_{X_f})\ar[r]&1\\
        1\ar[r]& G_{(G_n)}\ar[r]&\No( G_{(G_n)},\alpha)\ar[r]&\GL(G,\alpha)\ar[r]&1.}
    \end{align*}
    Note that the exact short sequence of $ G_{(G_n)}$ splits by Corollary \ref{coro:splitsequence}, where the section used $\omega: \Aut(G)\to \No( G_{(G_n)},\alpha)$ has image $\{B_\psi: \psi\in \Aut(G)\}$, with $B_\psi$ defined in Remark \ref{rem:desc-B_psi}.
    Since $\hat{\pi}$ is injective, by Proposition \ref{Prop:MEF-implies-compatible}, we only need to prove that $B_\psi\in \mbox{Im}(\hat{\pi})$ to conclude the result.
For $\psi\in \Aut(G)$, consider $\phi_\psi$ defined in Proposition \ref{prop:norm-in-Toep-1.0}. 
We claim that $\hat{\pi}(\phi_\psi)=B_\psi$. 
For every $g\in G$ we have that 
\begin{align*}
    \hat{\pi}(\phi_\psi)(\pi(\sigma^gx_{f,\bm\xi}))=\pi(\phi_\psi(\sigma^gx_{f,\bm\xi}))=\alpha^{\psi(g)}\pi(x_{f,B_\psi(\bm\xi)})=\alpha^{\psi(g)}B_\psi(\bm\xi).
\end{align*}
Now, using minimality of $G_{(G_n)}$ we obtain that $\hat{\pi}(\phi_\psi)=B_\psi$ and we conclude the proof.
   \end{proof}
 We illustrate Corollary \ref{coro: splitsequence of Xf- Aut} with various groups: the discrete Heisenberg group, ${\rm PSL}_2(\Z)$ and abelian groups.

\begin{exem}\label{example: Heisenberg}
    Let $\H_3(\Z)$ be the discrete Heisenberg group.
    That is, the group defined on $\mathbb{Z}^3$ with group multiplication given by
    \begin{align*}
        (x,y,z)\cdot(x',y',z') = (x+x',y+y',z+z'+xy').
    \end{align*}
    The center of this group is $\Cent(\H_3(\Z)) = \{(0,0,z): z\in \mathbb{Z}\}$.
    An element in $\Aut(\H_3(\Z))$ is completely characterized by the information given in  $\{(1,0,0), (0,1,0)\}$, as this set generates $\H_3(\Z)$. Recall that these isomorphism preserves the center  $\Cent(\H_3(\Z))$.
    The relation between these elements is given by 
    \begin{align}\label{eq: natural-Heisenberge-eq}
        (1,0,0)\cdot(0,1,0) = (0,1,0)\cdot(1,0,0)\cdot(0,0,1).
    \end{align}
    Let $\varphi\in \Aut(\H_3(\Z))$ be such that $\varphi(1,0,0)=(a,b,s)$, $\varphi(0,1,0) = (c,d,t)$. 
    Applying $\varphi$ to \eqref{eq: natural-Heisenberge-eq} provides 
    \begin{align*}
        (a,b,s)\cdot(c,d,t) = (c,d,t)\cdot(a,b,s)\cdot(0,0,\varepsilon), \;\varepsilon\in \{-1,1\}.
    \end{align*}
    The previous relation implies that $ad-bc = \pm 1$.
    On the other hand any element $(x,y,z)\in \H_3(\Z)$ is written as 
    \begin{align*}
        (x,y,z) = (0,1,0)^y\cdot(1,0,0)^x\cdot(0,0,1)^z,
    \end{align*}
    where $(a,b,c)^d$ denotes the product of $(a,b,c)$ with itself $d$ times in the group.
    This gives that
    \begin{align}\label{eq: Heisenberg-eq-2}
        \nonumber\varphi(x,y,z) &= (c,d,t)^y \cdot (a,b,s)^x \cdot(0,0,\varepsilon)^z\\
        &= (yc+ax, yd+bx, yt+xs+\frac{y(y-1)}{2}cd+\frac{x(x-1)}{2}ab+\varepsilon z).
    \end{align}
   Recall from \cite{LiSaUg14} that
   for each $n\in \mathbb{N}$,  the subgroup  $G_n\leqslant \H_3(\Z)$ defined by 
    \begin{align*}
        G_n = \{(s_n x, s_n y, s_n z)\in \H_3(\Z)\mid x,y,z\in \mathbb{Z}\}, \mbox{ for some fix } s_n\in \mathbb{N},
    \end{align*}
    is normal and has finite index.
    If $(s_n)_{n\geq 0}$ is a sequence of odd numbers such that $s_n\mid s_{n+1}$ (for instance, $s_n = 3^n$), then $(G_n)_{n\geq 0}$ is a scale.
    Moreover, \eqref{eq: Heisenberg-eq-2} implies that $\varphi(G_n)\subseteq G_n$ and consequently, $\varphi(G_n)=G_n$ as they have the same finite index.
    Using that $\Aut(\H_3(\Z))\cong \mathbb{Z}^2\rtimes \GL_2(\mathbb{Z})$ (see, for instance, \cite{Os15}) and Corollary \ref{coro: splitsequence of Xf- Aut}, we obtain that $\No(X_f,\sigma_{X_f})\cong \H_3(\Z)\rtimes(\mathbb{Z}^2\rtimes \GL_2(\mathbb{Z}))$.
\end{exem}

\begin{exem}\label{ex:PSL2}
    Let $\PSL_2(\mathbb{Z})= \SL_2(\Z)/\{\pm I\}$ be the group of $2\times 2$ matrices with determinant $1$ where we identify every matrix $A$ with $-A$.
    In \cite{JoTh86}, it is shown that $\Aut(\PSL_2(\mathbb{Z})) \cong \PGL_2(\Z)$, where $\PGL_2(\Z)=\GL_2(\Z)/\{\pm I\}$ and acts by conjugacy on $\PSL_2(\mathbb{Z})$. Note that $\PSL_2(\Z)$ is a subgroup of index two in $\PGL_2(\Z)$.
    Moreover, for every $n\geq 2$,
    \begin{align*}
        \Gamma(n) &= \left\{\begin{pmatrix}
            a&b\\c&d
        \end{pmatrix}\in \PSL_2(\Z)\mid a\equiv d\equiv 1\bmod{n}, b\equiv c\equiv 0\bmod{n} \right\}\\
        M(n) &= \left\{\begin{pmatrix}
            a&b\\c&d
        \end{pmatrix}\in \PSL_2(\Z)\mid a\equiv d\bmod{n}, b\equiv c\equiv 0\bmod{n} \right\}
    \end{align*}
    are normal subgroups in $\PGL_2(\Z)$.
    It is direct to check that any iso\-mor\-phism  $\psi\in \Aut(\PSL_2(\mathbb{Z}))$ preserves theses groups: $\psi(\Gamma(n))=\Gamma(n)$, and $\psi(M(n))=M(n)$.
    Therefore, for any scale $(G_n)_{n\geq 0}$ of the form $(\Gamma(s_n))_{n\in \mathbb{N}}$ or $(M(s_n))_{n\geq 0}$, the transformation group  of  the associated  odometer $(\PSL_2(\mathbb{Z})_{(G_n)},\alpha, \PSL_2(\mathbb{Z}))$ is  $\GLN(\PSL_2(\mathbb{Z}),\alpha) = \PGL_2(\Z)$.
    Moreover, it holds also true for $(X_f,\sigma, \PSL_2(\mathbb{Z}))$ by Corollary \ref{coro: splitsequence of Xf- Aut}, and hence, $\No(X_f, \sigma) \cong \PSL_2(\mathbb{Z})\rtimes \PGL_2(\Z)$.
\end{exem}

For the case $G=\mathbb{Z}^d$, we can give a precise description of the transformation groups $\GLN(\mathbb{Z}^d,\alpha)$ and $\GLN(\mathbb{Z}^d,\sigma_{X_f})$. We provide here examples where the transformation groups are all of $\Aut(\Z^d)$. We also give examples where the transformation  group is a proper subgroup.

\begin{exem}\label{ex:nonTrivialAutZd}
    Let $G = \mathbb{Z}^d$, for some $d\geq 1$. 
    Notice that any isomorphism $\psi\in \Aut(\mathbb{Z}^d) \cong \GL_d(\mathbb{Z})$ preserves subgroup of the form $s\mathbb{Z}^d$ with  $s\in \Z$: $\psi(s\Z^d)= s\Z^d$.
    Thus, if $ \Z^d_{(s_n \Z^d)}$ denotes an odometer associated with scaling sequence $s_n\mathbb{Z}^d$, for some sequence $\{s_n\}_{n\ge 0}\subset \mathbb{Z}$ (for instance, $s_n = 2^n)$, then Corollary \ref{coro: splitsequence of Xf- Aut} implies  $\No(X_f,\sigma)\cong \Z^d\rtimes \GL_d(\mathbb{Z})$.
\end{exem}

Actually, still for the case $G=\mathbb{Z}^2$,   the principal congruence subgroups 
\begin{align*}
    {\rm Cong}(m)=\left\{\begin{pmatrix}
        a&b\\c&d
    \end{pmatrix}\in \GL_2(\Z)\mid  c\equiv 0\bmod{m} \right\}, m\geq 2
\end{align*}
of $\GL_2(\Z)$ can be realized as the transformation groups of  Toeplitz subshifts.

\begin{proposition}
    For each $m\geq 2$, there exists a Toeplitz subshift $(X,\sigma, \mathbb{Z}^2)$ such that $\GLN(\Z^2,\sigma) \cong {\rm Cong}(m)$ and  $\No(X, \sigma) \cong \Z^2 \rtimes {\rm Cong}(m)$.
\end{proposition}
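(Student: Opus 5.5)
Take the $s=0$ subshift $X_f$ of Corollary \ref{Coro: s=0} over a well-chosen $\Z^2$-odometer and compute its transformation group; Corollary \ref{coro: splitsequence of Xf- Aut} does the rest, provided every transformation of the odometer fixes every level of the scale. The natural candidate is the scale
\[
H_n = \{(x,y)\in\Z^2 : x\equiv 0 \bmod m^{n+1},\ y\equiv 0 \bmod m^{n}\} = M_n\Z^2,\qquad M_n=\begin{pmatrix} m^{n+1}&0\\ 0&m^n\end{pmatrix}.
\]
It is decreasing, of finite index, and $\bigcap_n H_n=\{0\}$. Here $\GLN(\Z^2,\sigma)$ denotes the transformation group of $X_f$.

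\textbf{Step 1: $\GLN(\Z^2,\alpha)={\rm Cong}(m)$.} A matrix $P=\begin{pmatrix}a&b\\c&d\end{pmatrix}\in\GL_2(\Z)$ acts on column vectors.
\begin{itemize}
\item If $c\equiv 0 \bmod m$, take $(x,y)\in H_n$. Then $ax+by\equiv by\equiv 0 \bmod m^{n+1}$ fails unless $m\mid b$. So with this orientation the condition needed is on $b$, not $c$.
\item I therefore switch the roles and set $H_n=\{x\equiv 0 \bmod m^n,\ y\equiv 0 \bmod m^{n+1}\}$. Then $P(x,y)=(ax+by,\,cx+dy)$, and $cx+dy\equiv cx \bmod m^{n+1}$, which is $0$ when $m\mid c$. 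The first coordinate is clearly divisible by $m^n$.
\item Hence ${\rm Cong}(m)$ preserves every $H_n$. It is a group, so it is closed under inverses, and therefore $P(H_n)=H_n$ for all $n$.
\end{itemize}
Conversely, suppose $P\in\GLN(\Z^2,\alpha)$. Then $P(H_k)\subseteq H_1$ for some $k$, by Proposition \ref{lemma: if-only-Norm}. The cleanest check is through Proposition \ref{prop:charNormalizerAbelianOdo}. Here $H_n^*=m^{-n}\Z\times m^{-n-1}\Z$, so
\[
H^*_\infty=\Z[1/m]^2 \quad(\text{if } m\ge 2).
\]
But this group is invariant under all of $\GL_2(\Z)$, so the invariant does not detect ${\rm Cong}(m)$: the "shifted" scale is conjugate to the standard one.

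\textbf{The obstacle and the fix.} The main difficulty is choosing a scale whose transformation group is exactly ${\rm Cong}(m)$; the naive one fails because $H^*_\infty$ is not fine enough. The fix is to use two primes. Write $m$'s divisibility in a coordinate direction of its own, and use a prime $p\nmid m$ for the refinement:
\[
H_n = \{(x,y): x\equiv 0 \bmod p^n,\ y\equiv 0 \bmod m p^n\}.
\]
Then $H^*_\infty=\{(u,v)\in \Z[1/p]\times \tfrac1m\Z[1/p]\}$. By Proposition \ref{prop:charNormalizerAbelianOdo}, $\GLN$ consists of the $P\in\GL_2(\Z)$ that preserve this lattice, and the dual action is by $P^{-T}$. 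Preserving $\Z\times\frac1m\Z$ modulo $\Z[1/p]$-scaling (with $p$ invertible mod $m$) forces the off-diagonal entry of $P^{-T}$ mapping the second coordinate to the first to vanish mod $m$. Translated back, this is exactly $c\equiv 0 \bmod m$ for $P$, possibly after swapping coordinates to match the convention. I expect only a short check of which off-diagonal entry is constrained; if it is the wrong one, I swap the coordinates.

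\textbf{Step 2: the hypothesis of Corollary \ref{coro: splitsequence of Xf- Aut}.} Every $P\in{\rm Cong}(m)$ satisfies $P(H_n)=H_n$, by the direct computation above with $p^n$ inserted. This is the hypothesis the corollary requires.

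\textbf{Conclusion.} With $X=X_f$, Corollary \ref{coro: splitsequence of Xf- Aut} gives $\GLN(\Z^2,\sigma)=\GLN(\Z^2,\alpha)\cong{\rm Cong}(m)$. It also gives the split exact sequence $1\to\Z^2\to\No(X,\sigma)\to{\rm Cong}(m)\to 1$, whence $\No(X,\sigma)\cong \Z^2\rtimes{\rm Cong}(m)$.
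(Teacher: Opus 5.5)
Your final scale $H_n=p^n\Z\times mp^n\Z$ with $p\nmid m$ is exactly the paper's $\Lambda_n$ (the lattice spanned by $p^n(m,m)$ and $(p^n,0)$). Like the paper, you show that ${\rm Cong}(m)$ is precisely the group of transformations preserving every level and then invoke Corollary \ref{coro: splitsequence of Xf- Aut}, so the proof is correct and essentially the paper's. The one variation is the inclusion $\GLN(\Z^2,\alpha)\subseteq{\rm Cong}(m)$: the paper reads it off directly from Proposition \ref{lemma: if-only-Norm}, since $p^kM(\Lambda_0)^{-1}AM(\Lambda_0)$ must be integral and its $(1,2)$ entry is $p^kc/m$. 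That route avoids the transpose convention you hedge on with $H^*_\infty$; your $P^{-T}$ reading is the right one and does give $m\mid c$.
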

Before to prove it, we need to fix some notations. Recall that a finite index subgroup $\Gamma$ of $\mathbb{Z}^d$ is generated by $d$ elements $\{ v_1,\dots, v_d\}\subseteq \mathbb{Z}^d$ such that the matrix with columns $v_1,\dots, v_n$ is invertible with determinant equal to its index.
That is, $\Gamma = \langle v_1,\dots, v_d\rangle$ and  $|\det(M(v_1,\ldots, v_d))| = [G:\Gamma]$. We use the notation $M(\Gamma)$ to refer to the matrix associated with  the generators of  $\Gamma$, i.e., $M(\Gamma) = M(v_1,\dots, v_n)$. 
Note that for  any other finite index subgroup  $\Lambda= \langle w_1,\dots, w_d\rangle$,  the inclusion $\Lambda \subseteq \Gamma$ means that there exists a $d\times d$ non-singular matrix $B\in M(d,\Z)$ such that $M(\Gamma)B=M(\Lambda)$.

\begin{proof}
    Let $m\geq 2$. Let $p\in \Z^+$ be a prime number such that $(m,p)=1$. 
    Consider the scale given by the lattices  $\Lambda_n \leqslant \Z^2$, $n\ge 0$ generated by the vectors $ p^n(m , m)$ and $(p^n,0)$. The associated matrix is  $M(\Lambda_n) = p^n\begin{pmatrix}
        m&1\\m&0
    \end{pmatrix}$, $n\geq 0$.
Let $(\Z^2_{(\Lambda_n)},\alpha, \Z^2)$ be the odometer associated with the scale $(\Lambda_n)_{n\geq 0}$.

We claim that we have the following characterization: for any $2 \times 2$ matrix $A$, denoting  the  automorphism $\psi_A\in \Aut(\Z^2)$, we have
\begin{align}\label{eq: symmetry-iff-principal}
    \psi_A(\Lambda_n)=\Lambda_n, \forall n\geq 0\iff A\in {\rm Cong}(m).
\end{align}
Indeed, writing $A=\begin{pmatrix}
    a&b\\c&d
\end{pmatrix}$.   the next formula holds
\begin{align*}
    M(\Lambda_n)^{-1}AM(\Lambda_n) =& \begin{pmatrix}
            c+d&c/m\\m(a-c+b-d)&a-c
        \end{pmatrix}, \quad \forall n\geq 0.
\end{align*}
This shows that condition $A \in {\rm Cong}(m)$ is sufficient.

On the other hand, if $\psi_A\in \GLN(G,\alpha)$, then there exists $n\geq 1$ and a non-singular matrix $B\in M(2,\Z)$ such that  $AM(\Lambda_n) = M(\Lambda_0)B$.
Since $M(\Lambda_n)=p^nM(\Lambda_0)$, we have $p^n M(\Lambda_0)^{-1}AM(\Lambda_0) =B$.
Since $(m,p)=1$, we conclude that $A\in {\rm Cong}(m)$.

Therefore, we conclude the result using Corollary \ref{coro: splitsequence of Xf- Aut}.
\end{proof}

\subsection{Toeplitz subshifts with trivial automorphism group}\label{sec:ToeplTrivialAuto}
For the remainder of this section, we present as an application of Theorem \ref{Theorem: diverse-Aut} Toeplitz subshifts with automorphism group isomorphic to $\Cent(G)$ for distinct matrix groups.
As noticed below, this construction does not depend on the amenability of the groups.

First, we need the following arithmetic lemma.
\begin{lemma}\label{lemma:sequences-numbers}
    For every $l\in (0,1)$, there exist a sequence  $(t_n)_{n\geq 0}\subseteq \mathbb{Z}^+$ and a strictly increasing sequence $(m_n)_{n\geq 0}\subseteq \mathbb{N}$ such that $2^{m_{n+1}-m_n}\to\infty$ as $n\to\infty$,
     \begin{align}\label{eq:conditions-T} t_n\equiv t_{n+1}\bmod{2^{m_n}},\mbox{ and } 0\leq l-\frac{t_n}{2^{m_n}}\leq \frac{1}{2^{m_{n+1}-m_n}}+\frac{n}{2^{m_n}}.
    \end{align}
    Moreover, the second condition implies that 
    \begin{align*}
        \frac{t_n}{2^{m_n}}\to l\mbox{ and }2^{m_n}\left(l-\frac{t_n}{2^{m_n}}\right)\to\infty\mbox{ as }n\to\infty.
    \end{align*} 
    
\end{lemma}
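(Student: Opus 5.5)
We will build the sequences explicitly from the binary expansion of $l$. First we fix $(m_n)_{n\geq 0}$ strictly increasing with $m_{n+1}-m_n\to\infty$; for instance $m_n = n(n+1)/2$, so that $m_{n+1}-m_n=n+1$. This already gives $2^{m_{n+1}-m_n}\to\infty$. Then we define $t_n$ as a truncation of $l$ at precision $2^{-m_n}$, shifted downward by an amount of order $n$ so that the second claimed limit holds.

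The natural first choice is $t_n'=\lfloor 2^{m_n} l\rfloor$. It satisfies $0\le l-t_n'/2^{m_n}<2^{-m_n}$. The congruence also holds: writing $l=\sum_k b_k2^{-k}$ (taking the non-terminating expansion if $l$ is dyadic), $t'_n=\sum_{k\le m_n} b_k 2^{m_n-k}$, and $t'_{n+1}=2^{m_{n+1}-m_n}t'_n+(\text{terms with }k\in(m_n,m_{n+1}])$. This gives $t'_{n+1}\equiv \sum_{m_n<k\le m_{n+1}}b_k2^{m_{n+1}-k}\pmod{2^{m_n}}$, which is not simply $t'_n$.

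So we read the congruence $t_n\equiv t_{n+1}\bmod 2^{m_n}$ literally, as an integer congruence, and build $t_n$ inductively. Set $t_0=\lfloor 2^{m_0}l\rfloor-c_0$ with $c_0\ge0$ small enough that $t_0>0$. Given $t_n$, choose $t_{n+1}=t_n+k\,2^{m_n}$, where $k\ge0$ is the largest integer with $t_{n+1}\le 2^{m_{n+1}}l-(n+1)$. Such a $k\ge0$ exists once $2^{m_{n+1}}l-(n+1)\ge t_n$, and this holds for large $n$ because $2^{m_{n+1}}l$ grows much faster than $t_n\le 2^{m_n}l$, using $l>0$. By maximality,
\[
2^{m_{n+1}}l-(n+1)-2^{m_n}<t_{n+1}\le 2^{m_{n+1}}l-(n+1).
\]
Dividing by $2^{m_{n+1}}$ gives
\[
\frac{n+1}{2^{m_{n+1}}}\le l-\frac{t_{n+1}}{2^{m_{n+1}}}<\frac{1}{2^{m_{n+1}-m_n}}+\frac{n+1}{2^{m_{n+1}}}.
\]
This is the required inequality at index $n+1$ (with $0\le$ on the left), up to the harmless index shift in the first term. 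If the index must match exactly, we choose $m$ with $m_{n+2}-m_{n+1}\le m_{n+1}-m_n+1$, or absorb the discrepancy by replacing $n$ with a slightly larger shift. The few initial indices where the existence condition might fail are handled by starting the induction late and reindexing. Positivity $t_n\in\Z^+$ follows from $t_n\ge 2^{m_n}l-n-2^{m_{n-1}}>0$ eventually.

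The consequences are then immediate. The upper bound tends to $0$, since both $2^{-(m_{n+1}-m_n)}\to0$ and $n/2^{m_n}\to0$, so $t_n/2^{m_n}\to l$. The lower bound gives $2^{m_n}\bigl(l-t_n/2^{m_n}\bigr)\ge n\to\infty$. Note that the stated inequality alone does not give a lower bound, so this second limit comes from our construction rather than from \eqref{eq:conditions-T}.

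The main obstacle is the bookkeeping: reconciling the index of the error term $2^{-(m_{n+1}-m_n)}$ with the inductive step, and keeping $k\ge0$ so that the congruence holds without breaking the lower bound. We would handle both by choosing $m_n$ growing fast and starting the induction at a large index.
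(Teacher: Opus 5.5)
Your construction is the paper's. Both set $t_{n}=t_{n-1}+a_n2^{m_{n-1}}$, where $a_n$ is the largest integer with $t_n\le l2^{m_n}-n$. The paper chooses $m_n$ inductively, large enough that $a_n>0$ and the gaps increase. You fix $m_n=n(n+1)/2$ and ``start late and reindex'', but as written the reindexing turns the shift $n$ into $n+N$. It is simpler to take $m_n=M+n(n+1)/2$ with $2^Ml\ge 1$: then every step works from $n=0$, because the existence condition reduces to $l\,2^{m_n}(2^{n+1}-1)\ge 1$. Either way, what the construction actually proves is
\[
\frac{n}{2^{m_n}}\le l-\frac{t_n}{2^{m_n}}<\frac{1}{2^{m_n-m_{n-1}}}+\frac{n}{2^{m_n}},
\]
so the gap index is shifted by one. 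The paper says the displayed condition ``follows directly'', but it does not. You are also right that the divergence of $2^{m_n}(l-t_n/2^{m_n})$ comes from the lower bound $n/2^{m_n}$ built into the construction, not from the displayed inequality.

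The genuine gap is your claim that the index mismatch can be repaired. Write $d_n=m_{n+1}-m_n$.
\begin{itemize}
\item Imposing $d_{n+1}\le d_n+1$ still leaves a factor of $2$ between the bound you get and the bound required.
\item After multiplying by $2^{m_n}$, the discrepancy is of order $2^{m_{n-1}}$ whenever $d_n>d_{n-1}$. No shift of size $O(n)$ can absorb that.
\end{itemize}
In fact the literal statement is false for $l=1/2$. Put $e_n=2^{m_n-1}-t_n$.
\begin{enumerate}
\item The inequality reads $0\le e_n\le 2^{m_n-d_n}+n$.
\item The congruence $t_n\equiv t_{n+1}\pmod{2^{m_n}}$ gives $e_{n+1}\equiv e_n+2^{m_n-1}\pmod{2^{m_n}}$.
\item For large $j$ we have $d_j\ge 2$, hence $e_j<2^{m_j-1}$.
\item For every $k>j$, $e_k\equiv e_{j+1}\equiv e_j+2^{m_j-1}\pmod{2^{m_j}}$, so bit $m_j-1$ of $e_k$ equals $1$.
\item Applying this with $j=k-1$ and $j=k-2$ gives $e_k\ge 2^{m_{k-1}-1}+2^{m_{k-2}-1}$ for large $k$.
\item Since $d_k\to\infty$, we have $d_k>d_{k-1}$ for infinitely many $k$. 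For those $k$ the upper bound gives $e_k\le 2^{m_{k-1}+d_{k-1}-d_k}+k\le 2^{m_{k-1}-1}+k$.
\item Since $2^{m_{k-2}-1}>k$ for large $k$, steps 5 and 6 contradict each other.
\end{enumerate}
So the correct resolution is to state the lemma with $2^{-(m_n-m_{n-1})}$ in place of $2^{-(m_{n+1}-m_n)}$. That is exactly what your argument (and the paper's) proves. It suffices for both limits, and those limits are the only properties used later.
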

\begin{proof}
    Let $l\in (0,1)$.
    Define the sequences $(t_n)_{n\geq 0}$ and $(m_n)_{n\geq 0}$ inductively in the following way:
    
    Let $m_0\in\mathbb{N}$   be such that $t_0:=\floor{l2^{m_0}}>0$. 
    Choose $m_1\in \N$ satisfying
    \begin{align*}
        2^{m_1-m_0}>2^{m_0}\mbox{ and }a_1 := \floor*{l2^{m_{1}}-t_0-1}>0.
    \end{align*}
    Define $t_1 := t_0+a_12^{m_0}$.
    For the general case,
    let $n\geq 1$. 
    Assume that $t_k, m_k\in \N$ were already defined for $k<n$.
    
    Pick $m_n\in \N$ so that 
       \begin{align}\label{eq: res-conditions-T}
        2^{m_{n}-m_{n-1}}>2^{m_{n-1}-m_{n-2}}\mbox{ and }a_n:=\floor*{\frac{l2^{m_{n}}-t_{n-1}-(n)}{2^{m_{n-1}}}}>0.
    \end{align}
    Define $t_{n}:=t_{n-1}+a_n2^{m_{n-1}}$.
    
    Condition \eqref{eq:conditions-T} follows directly by  \eqref{eq: res-conditions-T} and the definition of $(t_n)_{n\geq 0}$, which concludes the proof.
\end{proof}
The groups considered in the following examples are  subgroups of a general linear group $\GL_m(\mathbb{Z})$, for some $m\geq 2$.
For such subgroups, the scale $(G_n)_{n\geq 0}$ is so that  every $G_n$ is the intersection of the group we are working with and the subgroups $G_n'\leqslant \SL_m(\Z)$ given by
\begin{align}\label{eq: subgroups-gamma-n}
    G_n'=\left\{A\in \SL_m(\Z)\mid A\equiv I_m \bmod{2^n}\right\},\; n\in\mathbb{N}.
\end{align}
\textbf{Discrete Heisenberg group.}
For this example, {$\H_3(\Z)$} will denote the discrete Heisenberg group presented in Example  \ref{example: Heisenberg}.
This group can be also realised as a subgroup of $\SL_3(\mathbb{Z})$ by 
\begin{align*}
    \H_3(\Z)=\left\{M(a,b,c)\in \SL_3(\Z)\mid a,b,c\in\mathbb{Z}\right\}, \mbox{where }M(a,b,c)=\left(\begin{matrix}
        1&a&c\\
        0&1&b\\
        0&0&1
    \end{matrix}\right).
\end{align*}
Recall that the center of $\H_3(\Z)$ is given by 
\begin{align*}
    \Cent(\H_3(\Z))=\left\{M(0,0,c)\in \H_3(\Z)\mid c\in\mathbb{Z}\right\}.
\end{align*}
\begin{proposition}\label{Coro: Hei-trivial Toep}
Let $G_n = G_n'\cap \H_3(\Z)$ for every $n\geq 0$.
Then, there exists a Toeplitz subshift $(X,\sigma, \H_3(\Z))$ such that its maximal equicontinuous factor is $\H_3(\Z)_{(G_n)}$ and 
\begin{align*}
    \AutDyn(X,\sigma) \cong Z(\H_3(\Z)).
\end{align*}
\end{proposition}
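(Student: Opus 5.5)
We will apply Theorem \ref{Theorem: diverse-Aut} with $s=1$ (or $s=2$ if needed), choosing a point $\bm{\zeta}_1\in \H_3(\Z)_{(G_n)}$ outside the orbit of $\tilde{1_G}$ such that $A_{\bm{\zeta}_1}^c$ is exactly the center, and such that the elements of $\Theta$ commute with the center. The lower bound is then immediate: central elements $g$ satisfy $\tilde g\bm{\zeta}=\bm{\zeta}\tilde g$ for every $\bm{\zeta}$, since $\tilde g$ is central in the odometer, $\tilde G$ being dense. Hence $Z(\H_3(\Z))\leqslant \Centr_G(\Theta)$, and Theorem \ref{Theorem: diverse-Aut} gives $Z(\H_3(\Z))\leqslant \AutDyn(X,\sigma)\leqslant A_{\bm\zeta_1}^c$.

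The first step is to identify the odometer concretely. Since $G_n=\{M(a,b,c): a,b,c\equiv 0 \bmod 2^n\}$, the quotient $\H_3(\Z)/G_n$ is the Heisenberg group over $\Z/2^n\Z$. The odometer is therefore $\H_3(\Z_2)=\{M(a,b,c): a,b,c\in \Z_2\}$ with its usual matrix multiplication, and $\tilde G$ corresponds to the integer points.

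The second step is to choose $\bm{\zeta}_1=M(\alpha,\beta,0)$ with $\alpha,\beta\in\Z_2$, where $1,\alpha,\beta$ are $\Z$-linearly independent; for instance, $\alpha,\beta$ irrational $2$-adic numbers (non-rational $2$-adic expansions) that are independent over $\QQ$. This $\bm{\zeta}_1$ is not in $\tilde G$, so its orbit is disjoint from that of $\tilde{1_G}$. A direct computation gives
\[
\bm{\zeta}_1 M(a,b,c)\bm{\zeta}_1^{-1}=M(a,b,c+\alpha b-\beta a).
\]
This lies in $\tilde G$ if and only if $\alpha b-\beta a\in\Z$, which by independence forces $a=b=0$. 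Hence $A_{\bm{\zeta}_1}^c=Z(\H_3(\Z))$. Combined with the first paragraph, this yields $\AutDyn(X,\sigma)=Z(\H_3(\Z))$. The pairwise disjointness of orbits is the only hypothesis needed for $s=1$. The role of Lemma \ref{lemma:sequences-numbers} here is presumably to produce such $2$-adic numbers explicitly as limits of $t_n/2^{m_n}$ with controlled approximation, guaranteeing non-rationality; I would use it that way, or simply cite the uncountability of $\Z_2$ to choose independent elements.

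The main obstacle is making the independence of $\alpha$ and $\beta$ over $\Z$ rigorous and compatible with the element being a genuine point of the inverse limit. I would first try choosing $\alpha\in\Z_2\setminus\QQ$ arbitrarily, and then $\beta$ outside the countable set $\{(k+a\alpha)/b: k,a\in\Z,\ b\neq 0\}\cap\Z_2$. One must also double-check that the theorem's auxiliary elements $g_1\in G$ in $\Theta$ cause no issue. They do not: only the upper bound via $A^c_{\bm\zeta_1}$ and centrality of the center are used.
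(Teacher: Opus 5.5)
Your proof is correct, but it feeds different points into Theorem \ref{Theorem: diverse-Aut} than the paper does.

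\textbf{The paper's route.} The paper takes $s=2$ with two points. In your $\H_3(\Z_2)$ coordinates they are $\bm\xi=M(0,-\tau,0)$ and $\bm\zeta=M(-\tau,0,0)$, where $\tau\in\Z_2$ is the $2$-adic limit of the integers $t_n$ from Lemma \ref{lemma:sequences-numbers}. It is the $t_n$ themselves that converge $2$-adically, not $t_n/2^{m_n}$ as your aside suggests; the quotients $t_n/2^{m_n}$ converge to $l$ in $\R$. The paper combines these two facts to rule out $a\tau\in\Z$ for $a\neq0$, i.e.\ to certify $\tau\notin\QQ$. Conjugating $M(a,b,c)$ by the two points shifts the corner entry by $\pm a\tau$ and $\pm b\tau$ respectively. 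So each point kills one of $a,b$, and $A^c_{\bm\xi}\cap A^c_{\bm\zeta}$ is the center.

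\textbf{Your route.} You use a single generic point $M(\alpha,\beta,0)$, whose conjugation shifts the corner entry by $\alpha b-\beta a$. One computation therefore kills both coordinates at once. The price is that you need $1,\alpha,\beta$ independent over $\QQ$, rather than a single irrational $\tau$, and you obtain them non-constructively by countability.

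\textbf{What each buys.} Your version needs only one orbit besides that of $\tilde{1_G}$, so the fibers have cardinality $4,2$ rather than $8,4,2$. It also states the arithmetic condition transparently. Working directly with points of $\H_3(\Z_2)$ spares you the bookkeeping of compatible coset representatives. In the paper's write-up, the blocks of indices on which $\xi_i,\zeta_i$ are constant must be matched with the congruences $t_n\equiv t_{n+1}\bmod 2^{m_n}$.

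The paper's version is explicit and needs only one irrational $2$-adic number. Its ``one point per elementary direction'' pattern is also what gets reused, with the points $I_m+\tau E_{j,i}$, for $\GL_m(\Z)$, $\SL_m(\Z)$ and $\PSL_m(\Z)$ in Proposition \ref{prop:TrivialCentPSL}.
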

\begin{proof}
    Fix any $l\in (0,1)$.
    Define the elements $\bm{\xi}=({\xi}_iG_i)_{i\geq 0},\bm{\zeta}=(\zeta_iG_i)_{i\geq 0}\in \H_3(\Z)_{(G_n)}$  by 
    \begin{align*}
     {\xi}_i = M(0,-t_n,0), \zeta_i = M(-t_n,0,0)  \mbox{ for every }m_n+1\leq i\leq m_{n+1}, n\in \N, 
    \end{align*}
    where $(t_n)_{n\geq 0}$ and $(m_n)_{n\geq 0}$ are the sequences given in Lemma \ref{lemma:sequences-numbers} for  $l\in (0,1)$.
   Assume that $A = M(a,b,c)\in  A_{\bm{\xi}}^c\cap  A_{\bm{\zeta}}^c$.
    Thus, $\bm{\eta}^{-1}\widetilde{A}\bm{\eta}=\widetilde{B}\in \widetilde{\H_3(\Z)}$, for some $B = M(o,p,q)\in \H_3(\Z)$, and $\bm{\eta}\in \{\bm{\xi},\bm{\zeta}\}$.
    For $\bm{\eta}=\bm{\xi}$, we have that $(\xi_i)^{-1}A{\xi}_i\equiv B \bmod{2^i}$ for every $i\in\mathbb{N}$.
    In particular, the coordinate $(1,3)$ of the previous relation says that \begin{align*}
        -at_n+c= q+s_{i}2^{m_n}, 
    \end{align*} 
     for some sequence  $(s_{k})_{k\geq 0}\subseteq \mathbb{Z}$,  and $i\in \N$ so that $m_n+1\leq i\leq m_{n+1}$, $n\in \N$.
    Dividing by $2^{m_n}$ and taking limits as $i$ goes to $\infty$ ($n$ also goes to $\infty$), we obtain that $-al=\lim_{i\to\infty}s_{i}$.
On the other hand,    if $a\neq 0$,  the previous  expression becomes
    \begin{align*}
        -2^{m_n}\left(\frac{s_i}{a}+\frac{t_n}{2^{m_n}}\right)=-\frac{q-c}{a}.
    \end{align*}
    Lemma \ref{lemma:sequences-numbers} implies that the left hand side of the previous equation converges to $\infty$ as $i\to\infty$, but the left hand side is constant, a contradiction.
    Therefore, $a=0$. 
An analogous treatment for $\bm{\eta}=\bm{\zeta}$ implies that $b=0$.
Therefore, we have that $ A_{\bm{\xi}}^c\cap A_{\bm{\zeta}}^c=\Cent(\H_3(\Z))$.
Applying Theorem \ref{Theorem: diverse-Aut}, for any $n_1, n_2\in \N$ and $g_1,g_2\in \H_3(\Z)$ satisfying the hypothesis, we obtain the existence of $(X_f,\sigma, \H_3(\Z))$ with $\AutDyn(X_f,\sigma) = \Cent(\H_3(\Z))\cong \Z$.
\end{proof}

\textbf{General, projective, and special linear groups.}
Let $m\geq 2$.
The general linear group $\GL_m(\mathbb{Z})$ of degree $m$  is the group containing the $m\times m$ invertible matrices over $\mathbb{Z}$  and the special linear group of degree $m$, $\SL_m(\mathbb{Z})$.
Recall that the center of these groups are given by $\{I_m,-I_m\}$, where $I_m$ denotes the identity matrix of size $m\times m$.
Now, the projective linear group of degree $m$ is given by $\PSL_m(\mathbb{Z})=\SL_m(\mathbb{Z})/\{I_m,-I_m\}$.
From before, we notice that the center of $\PSL_m(\mathbb{Z})$ is given by the class of equivalence of $I_m$, i.e., it contains only the identity element of $\PSL_m(\mathbb{Z})$.
\begin{proposition}\label{prop:TrivialCentPSL}
        Let $G$ be $\GL_m(\mathbb{Z}), \SL_m(\mathbb{Z})$, or $\PSL_m(\mathbb{Z})$.
        Let $G_n = G_n'\cap G$ for every $n\geq 0$.
Then, there exists a Toeplitz subshift $(X,\sigma, G)$ such that its maximal equicontinuous factor is $G_{(G_n)}$ and 
\begin{align*}
    \AutDyn(X,\sigma) \cong \Cent(G).
\end{align*}  
    \end{proposition}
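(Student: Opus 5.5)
We will mimic the proof of Proposition~\ref{Coro: Hei-trivial Toep}, applying Theorem~\ref{Theorem: diverse-Aut} with a finite family of points $\bm\zeta_1,\dots,\bm\zeta_s\in G_{(G_n)}$. The aim is to get
\[\bigcap_{i=1}^s A_{\bm\zeta_i}^c=\Cent(G).\]
The theorem then gives $\Centr_G(\Theta)\leqslant \AutDyn(X,\sigma)\leqslant \Cent(G)$. Since $\Cent(G)\leqslant \Centr_G(\Theta)$ always holds, equality follows. We also need the $\bm\zeta_i$ to lie in pairwise distinct orbits, distinct from $O_\alpha(\tilde{1_G})$. This will follow because $\bm\zeta_i\notin\tilde G$ and $\bm\zeta_i\bm\zeta_j^{-1}\notin \tilde G$, by the irrationality-type behaviour of the sequences from Lemma~\ref{lemma:sequences-numbers}.

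\textbf{Choice of points.} Fix $l\in(0,1)$ and take $(t_n),(m_n)$ from Lemma~\ref{lemma:sequences-numbers}. For each pair $(p,q)$ with $p\neq q$, let $E_{pq}(t)=I_m+tE_{pq}$ be the elementary matrix. It lies in $\SL_m(\Z)$, and its image lies in $\PSL_m(\Z)$. Define $\bm\zeta_{pq}=(\zeta_iG_i)_i$ by $\zeta_i=E_{pq}(-t_n)$ for $m_n+1\le i\le m_{n+1}$. The congruence $t_n\equiv t_{n+1}\bmod 2^{m_n}$ makes this a compatible sequence, since $E_{pq}(t)E_{pq}(t')^{-1}=E_{pq}(t-t')\in G_{m_n}'$. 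This gives $s=m(m-1)$ points, and we will perturb them if distinct orbits require it.

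\textbf{Key computation.} Suppose $A=(a_{uv})\in A^c_{\bm\zeta_{pq}}$. Then there is $B\in G$ with $\zeta_i^{-1}A\zeta_i\equiv B \bmod 2^i$ for all $i$. Now $E_{pq}(t)A E_{pq}(-t)$ has $(p,q)$ entry equal to $a_{pq}+t(a_{qq}-a_{pp})-t^2a_{qp}$. Its other entries are $a_{pv}+ta_{qv}$ for $v\ne q$ and $a_{uq}-ta_{up}$ for $u\ne p$. Take an entry that is linear in $t_n$, say $a_{pv}-t_na_{qv}\equiv b_{pv}\bmod 2^{m_n}$ with $v\neq q$. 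Dividing by $2^{m_n}$ and using $2^{m_n}(l-t_n/2^{m_n})\to\infty$ exactly as in the Heisenberg case forces $a_{qv}=0$. Varying over all pairs $(p,q)$ kills every off-diagonal entry, so $A$ is diagonal. With $A$ diagonal, the $(p,q)$ entry becomes $t_n(a_{qq}-a_{pp})$, and the same argument forces $a_{pp}=a_{qq}$. Hence $A=\pm I_m$, which is $\Cent(G)$.

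For $\PSL_m(\Z)$ the congruences hold only up to sign. We will handle this by passing to a subsequence on which the sign is constant, after which the argument runs unchanged.

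\textbf{Main obstacle.} The delicate step is the limiting argument when the relevant entry contains a quadratic term $t_n^2a_{qp}$. We avoid it by first using only the entries that are linear in $t_n$. We must also check that reducing modulo $2^i$ with $i\le m_{n+1}$ suffices. As in the Heisenberg proof, we use the modulus $2^{m_n}$, which gives $x_n:=a_{qv}t_n-(a_{pv}-b_{pv})\in 2^{m_n}\Z$. We argue exactly as for Heisenberg. The quotient $x_n/2^{m_n}$ is an integer converging to $a_{qv}l$, so it is eventually constant, equal to some $k$. Then
\[a_{qv}\,2^{m_n}\bigl(l-t_n/2^{m_n}\bigr)=2^{m_n}(a_{qv}l-k)-(a_{pv}-b_{pv}).\]
The left side tends to $\pm\infty$ if $a_{qv}\ne0$. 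On the right side, the first term is either $0$ or grows like a nonzero multiple of $2^{m_n}$. The first case contradicts the divergence. In the second case, $a_{qv}l$ is an integer, and dividing by $2^{m_n}$ contradicts $2^{m_n}(l-t_n/2^{m_n})=o(2^{m_n})$, which follows from~\eqref{eq:conditions-T}.

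The remaining verifications, namely the hypotheses of Theorem~\ref{Theorem: diverse-Aut} and the choice of the $g_j$, are routine.
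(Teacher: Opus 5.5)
Your proposal is correct and is essentially the paper's proof. Like the paper, you build unipotent points from $I\pm t_nE_{pq}$ using the sequences of Lemma~\ref{lemma:sequences-numbers}, run the Heisenberg-style limit argument to get $\bigcap_{p\neq q}A^c_{\bm\zeta_{pq}}=\Cent(G)$, and conclude with Theorem~\ref{Theorem: diverse-Aut}; you also supply details the paper leaves implicit (the off-diagonal entries killed via entries linear in $t_n$, orbit disjointness, the sign ambiguity in $\PSL_m(\Z)$).

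One indexing slip, inherited from the paper, needs fixing. With $\zeta_i=E_{pq}(-t_n)$ on $m_n<i\le m_{n+1}$, compatibility at level $m_{n+1}$ would require $t_n\equiv t_{n+1}\bmod 2^{m_{n+1}}$. That fails here, since $0<t_{n+1}-t_n<2^{m_{n+1}}$. Instead put $t_n$ on the block $m_{n-1}<i\le m_n$; the congruence modulo $2^{m_n}$ your argument uses is then exactly what level $m_n$ provides.
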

    \begin{proof}
Let  $l\in (0,1)$, and $(t_n)_{n\geq 0}$ and $(m_n)_{n\geq 0}$ be as in Lemma \ref{lemma:sequences-numbers} for $l\in (0,1)$.
For every $1\leq i \neq j\leq m$, define $\bm{\zeta}_{i,j}=(({\zeta}_{i,j})_nG_n)_{n\geq 0}\in G_{(G_n)}$ by
\begin{align*}
    (\zeta_{i,j})_k=I_m+t_nE_{j,i}, \mbox{ for }m_n+1\leq k\leq m_{n+1}, n\geq 0,
\end{align*}
where $E_{j_0,i_0}$ is the $m\times m$ matrix with zeros everywhere except in position $(j,i)$.
In an analogous way as in the proof of Corollary \ref{Coro: Hei-trivial Toep}, we observe that $$\bigcap_{\substack{i,j=1\\i\neq j}}^m A_{\bm{\xi}_{i,j}}^c = \Cent(G).$$ 
Therefore, we conclude the proof by applying Theorem \ref{Theorem: diverse-Aut} for any ordering of the set $\{\zeta_{i,j}\mid 1\leq i,j\leq m, i\neq j\}$.
    \end{proof}


\begin{thebibliography}{99}

\bibitem{Au88}
Auslander, J. (1988).
\textit{Minimal flows and their extensions}.
North-Holland Mathematics Studies, vol.~153.
North-Holland, Amsterdam.

\bibitem{BaRoYa18}
Baake, M.; Roberts, J. A. G.; Yassawi, R. (2018).
\textit{Reversing and extended symmetries of shift spaces}.
Discrete Contin. Dyn. Syst. \textbf{38}(2), 835--866.

\bibitem{BBHLN}
Baake, M.; Bustos, A.; Huck, C.; Lemańczyk, M.; Nickel, A. (2021).
\textit{Number-theoretic positive entropy shifts with small centralizer and large normalizer}.
Ergodic Theory and Dynamical Systems, \textbf{41}(11), 3201--3226.

\bibitem{BLM}
Bustos, Á.; Luz, D.; Mañibo, N. (2023).
\textit{Admissible reversing and extended symmetries for bijective substitutions}.
Discrete \& Computational Geometry, \textbf{69}(3), 800--833.

\bibitem{BaCaRi25}
Barbieri, S.; Carrasco-Vargas, N.; Rivera-Burgos, P. (2025).
\textit{The automorphism group of a strongly irreducible subshift on a group}.
arXiv:2501.14463.

\bibitem{BoLiRu88}
Boyle, M.; Lind, D.; Rudolph, D. (1988).
\textit{The automorphism group of a shift of finite type}.
Trans. Amer. Math. Soc. \textbf{306}(1), 71--114.

\bibitem{Ca23}
Cabezas, C. (2023).
\textit{Homomorphisms between multidimensional constant-shape substitutions}.
Groups, Geometry, and Dynamics, \textbf{17}(4), 1259--1323.

\bibitem{CaPe23}
Cabezas, C.; Petite, S. (2024).
\textit{Large normalizers of $\mathbb{Z}^d$-odometers systems and realization on substitutive subshifts}.
Discrete \& Contin. Dyn. Syst. A, \textbf{44}(12), 3848-–3877

\bibitem{CeCoGo23}
Cecchi-Bernales, P.; Cortez, M. I.; Gómez, J. (2024).
\textit{Invariant measures of Toeplitz subshifts on non-amenable groups}.
 Ergodic Theory \& Dynamical Systems, \textbf{44}(11), 3186--3215.

\bibitem{Cornulier}
de Cornulier, Y. (2014).
\textit{Groupes pleins-topologiques (d’après Matui, Juschenko, Monod,...)}.
Séminaire Bourbaki, vol.~2012/2013, exp.~1061, 183.

\bibitem{CoMe16}
Cortez, M. I.; Medynets, K. (2016).
\textit{Orbit equivalence rigidity of equicontinuous systems}.
J. Lond. Math. Soc. (2) \textbf{94}(2), 545--556.

\bibitem{CoPe08}
Cortez, M. I.; Petite, S. (2008).
\textit{$G$-odometers and their almost 1-1 extensions}.
J. Lond. Math. Soc. (2) \textbf{78}(1), 1--20.

\bibitem{DoDuMaPe15}
Donoso, S.; Durand, F.; Maass, A.; Petite, S. (2016).
\textit{On automorphism groups of low complexity subshifts}.
Ergodic Theory \& Dynam. Systems \textbf{36}(1), 64--95.

\bibitem{DoDuMaPe17}
Donoso, S.; Durand, F.; Maass, A.; Petite, S. (2017).
\textit{On automorphism groups of Toeplitz subshifts}.
Discrete Anal. Paper No.~\textbf{11}, 19 pp.

\bibitem{Fu15}
Fuchs, L. (2015).
\textit{Abelian groups}.
Springer Monographs in Mathematics.
Springer, Cham.

\bibitem{FuKw19}
Fuhrmann, G.; Kwietniak, D. (2020).
\textit{On tameness of almost automorphic dynamical systems for general groups}.
Bull. Lond. Math. Soc. \textbf{52}(1), 24--42.

\bibitem{GPS95}
Giordano, T.; Putnam, I. F.; Skau, C. F. (1995).
\textit{Topological orbit equivalence and $C^*$-crossed products}.
J. Reine Angew. Math. \textbf{469}, 51--111.

\bibitem{GPS99}
Giordano, T.; Putnam, I.; Skau, C. (1999).
\textit{Full groups of Cantor minimal systems}.
Israel Journal of Mathematics \textbf{111}, 285--320.

\bibitem{GPS2019}
Giordano, T.; Putnam, I. F.; Skau, C. F. (2019).
\textit{$\mathbb{Z}^d$-odometers and cohomology}.
Groups, Geometry, and Dynamics \textbf{13}(3), 909--938.

\bibitem{HaKrSc22}
Hartman, Y.; Kra, B.; Schmieding, S. (2022).
\textit{The stabilized automorphism group of a subshift}.
Int. Math. Res. Not. IMRN \textbf{2022}(21), 17112--17186.

\bibitem{He69}
Hedlund, G. A. (1969).
\textit{Endomorphisms and automorphisms of the shift dynamical system}.
Math. Systems Theory \textbf{3}, 320--375.

\bibitem{Ho10}
Hochman, M. (2010).
\textit{On the automorphism groups of multidimensional shifts of finite type}.
Ergodic Theory \& Dynam. Systems \textbf{30}(3), 809--840.

\bibitem{HurderLukina}
Hurder, S.; Lukina, O. (2022).
\textit{Nilpotent group actions}.
Proc. Amer. Math. Soc. \textbf{150}(1), 289--304.

\bibitem{Jacobson}
Jacobson, N. (2012).
\textit{Basic algebra I}.
Courier Corporation.

\bibitem{JoTh86}
Jones, G. A.; Thornton, J. S. (1986).
\textit{Automorphisms and congruence subgroups of the extended modular group}.
J. London Math. Soc. (2) \textbf{34}(1), 26--40.

\bibitem{Gl03}
Glasner, E. (2003).
\textit{Ergodic theory via joinings}.
Mathematical Surveys and Monographs, vol.~101.
Amer. Math. Soc., Providence, RI.

\bibitem{KiRo91}
Kim, K. H.; Roush, F. W. (1991).
\textit{On the automorphism groups of subshifts}.
Pure Math. Appl. Ser. B \textbf{1}(4), 203--230.


\bibitem{Le14}
Leinster, T. (2014).
\textit{Basic category theory}.
Cambridge Studies in Advanced Mathematics, vol.~143.
Cambridge University Press, Cambridge.

\bibitem{LiSaUg14}
Lightwood, S.; Şahin, A.; Ugarcovici, I. (2014).
\textit{The structure and spectrum of Heisenberg odometers}.
Proc. Amer. Math. Soc. \textbf{142}(7), 2429--2443.

\bibitem{Li}
Li, X. (2018).
\textit{Continuous orbit equivalence rigidity}.
Ergodic Theory \& Dynamical Systems \textbf{38}(4), 1543--1563.

\bibitem{Ma91}
Margulis, G. A. (1991).
\textit{Discrete subgroups of semisimple Lie groups}.
Ergebnisse der Mathematik und ihrer Grenzgebiete (3), vol.~17.
Springer-Verlag, Berlin.

\bibitem{Ma06}
Matui, H. (2006).
\textit{Some remarks on topological full groups of Cantor minimal systems}.
Internat. J. Math. \textbf{17}(2), 231--251.

\bibitem{Med}
Medynets, K. (2011).
\textit{Reconstruction of orbits of Cantor systems from full groups}.
Bull. Lond. Math. Soc. \textbf{43}(6), 1104--1110.

\bibitem{MeTa19}
Medynets, K.; Talisse, J. P. (2019).
\textit{Toeplitz subshifts with trivial centralizers and positive entropy}.
Involve \textbf{12}(3), 395--410.

\bibitem{MerenkovSabitova}
Merenkov, S.; Sabitova, M. (2025).
\textit{On $\mathbb{Z}^d$-odometers associated to integer matrices}.
Groups, Geometry \& Dynamics \textbf{19}(1).

\bibitem{Os15}
Osipov, D. V. (2015).
\textit{The discrete Heisenberg group and its automorphism group}.
Math. Notes \textbf{98}(1-2), 185--188.

\bibitem{Renault}
Renault, J. (2008).
\textit{Cartan subalgebras in $C^*$-algebras}.
Irish Math. Soc. Bull. No.~61, 29--63.

\bibitem{RiZa10}
Ribes, L.; Zalesskii, P. A. (2010).
\textit{Profinite groups}.
Ergebnisse der Mathematik und ihrer Grenzgebiete (3), vol.~40.
Springer, Berlin.

\bibitem{Sabitova24}
Sabitova, M. (2024).
\textit{Endomorphism rings of toroidal solenoids}.
arXiv preprint arXiv:2411.11634.

\bibitem{Sabitova25}
Sabitova, M. (2025).
\textit{Computability of dimension groups}.
arXiv preprint arXiv:2509.04350.

\bibitem{SaSc25}
Salo, V. ; Schmieding, S. (2025).
\textit{Finitrary Ryan's and local $\mathcal{Q}$ entropy for $\Z^d$ subshifts}.
arXiv preprint arXiv:2503.03058.


\end{thebibliography}
\end{document}